\documentclass[11pt,reqno]{amsart}
\usepackage{a4wide}
\usepackage[utf8]{inputenc}
\usepackage{amsmath, mathscinet, amsthm, amscd, amsfonts, enumitem, amssymb, graphicx, xcolor, mathrsfs, dsfont}
\usepackage[english]{babel}
\usepackage[colorlinks=true,linkcolor=blue]{hyperref}


\makeatletter
\newcommand{\myitem}[2]{
  \item[#1]
  \edef\@currentlabel{#1}
  \label{#2}}
\makeatother

\numberwithin{equation}{section}


\theoremstyle{plain}
\newtheorem{theorem}{Theorem}[section]
\newtheorem{proposition}[theorem]{Proposition}
\newtheorem{lemma}[theorem]{Lemma}
\newtheorem{corollary}[theorem]{Corollary}

\theoremstyle{definition}
\newtheorem{definition}[theorem]{Definition}

\theoremstyle{remark}
\newtheorem{remark}[theorem]{Remark}
\newtheorem{example}[theorem]{Example}

\newcommand{\al}{\alpha}

\newcommand{\de}{\delta}
\newcommand{\ep}{\varepsilon}
\newcommand{\ph}{\varphi}
\newcommand{\ga}{\gamma}
\newcommand{\om}{\omega}
\newcommand{\rh}{\varrho}
\newcommand{\si}{\sigma}
\renewcommand{\th}{\theta}

\newcommand{\Om}{\Omega}


\newcommand{\dist}{\operatorname{dist}}
\newcommand{\trace}{\operatorname{tr}}
\newcommand{\supp}{\operatorname{supp}}

\renewcommand{\d}{\text{\rm d}}
\newcommand{\eins}{\mathds{1}}

\newcommand{\Cnt}{\operatorname{C}}

\newcommand{\Lip}{\operatorname{Lip}}

\newcommand{\cmp}{{\rm c}}
\newcommand{\bdd}{{\rm b}}

\newcommand{\Cb}{\Cnt_\bdd}
\newcommand{\Cc}{\Cnt_\cmp}
\newcommand{\Cbinf}{\Cnt_\bdd^\infty}
\newcommand{\Ccinf}{\Cnt_\cmp^\infty}
\newcommand{\Lipb}{\Lip_\bdd}
\newcommand{\BUC}{\operatorname{UC}_\bdd}

\newcommand{\E}{\mathbb{E}}
\newcommand{\N}{\mathbb{N}}
\renewcommand{\P}{\mathbb{P}}

\newcommand{\R}{\mathbb{R}}

\newcommand{\Bc}{\mathcal{B}}

\newcommand{\Pc}{\mathcal{P}}

\begin{document}

\title[Chernoff-Mehler Formula for L\'evy Processes with Drift]{Chernoff-Mehler  Approximation for L\'evy Processes with Drift}

\author{Max Nendel}
\address{Department of Statistics and Actuarial Science, University of Waterloo}
\email{mnendel@uwaterloo.ca}

\thanks{The author thanks Jonas Blessing, Sven Fuhrmann, Markus Kunze, and Michael Kupper for valuable comments and discussions related to this work.}
\date{\today}

\begin{abstract}
	In this paper, we study an approximation scheme for L\'evy processes with drift in terms of a representation that is akin to the celebrated Mehler formula for L\'evy-Ornstein-Uhlenbeck processes.\ The approximation scheme is based on a variant of the Chernoff product formula on the space of bounded continuous functions. In a first step, we provide sufficient and necessary conditions for arbitrary families of probability measures, indexed by positive real numbers, to give rise to a convolution semigroup via a Chernoff approximation on the space of bounded continuous functions, equipped with the mixed topology.\ In this context, we provide explicit criteria both for the convergence of subsequences and the entire family, and discuss fine properties related to the domain of the associated generator of the L\'evy process and the infinitesimal behavior of the approximating family of measures.\ In a second step, we enrich the family of measures by a deterministic component and derive explicit conditions that ensure both the convergence of subsequences and the entire family to a L\'evy process with drift under a Chernoff approximation.\ In a series of examples, we show that our general conditions on the dynamics are satisfied, for example, by flows of Lipschitz ordinary differential equations, Euler schemes, and arbitrary Runge-Kutta methods, and that the Central Limit Theorem can be subsumed under our framework. \medskip
    
	\noindent \emph{Key words:} L\'evy process with drift, Mehler formula, Chernoff approximation, Markov process, Feller semigroup, infinitely divisible distribution, Central Limit Theorem \smallskip
	
	\noindent \emph{AMS 2020 Subject Classification:} Primary 35A35; 60G51;
 Secondary 35K10; 60F05; 60G53 
\end{abstract}

\maketitle

\section{Introduction}

In this paper we discuss a specific version of the classical Chernoff product formula, cf.\ Engel and Nagel \cite[Section III.5]{engel2000one} and Pazy \cite[Section 3.5]{pazy2012semigroups} for L\'evy processes with drift on $\R^d$.\ We consider an arbitrary family of Borel probability measures $(\mu_t)_{t>0}$ and a family $(\psi_t)_{t>0}$ of measurable functions $\R^d\to \R^d$, and define the transition operator
\begin{equation}\label{eq.transition.intro}
 (P_t f)(x):=\int_{\R^d} f\big(\psi_t(x)+y\big)\,\mu_t(\d y)\quad \text{for }f\in \Cb(\R^d),
\end{equation}
 where $\Cb(\R^d)$ denotes the space of all bounded continuous functions $\R^d\to \R$. If $\psi_t(x)=x$ and $\mu_t=\P\circ Y_t^{-1}$ for all $t>0$ and $x\in \R^d$ with a L\'evy process $(Y_{t})_{t\geq 0}$ on a probability space $(\Om,\mathcal F,\P)$, then $(P_t)_{t\geq 0}$ is simply the transition semigroup of the L\'evy process $(Y_{t})_{t\geq 0}$. If $\psi_t(x)=e^{tA}x$ for some matrix $A\in \R^{d\times d}$ and $\mu_t$ is the distribution of the stochastic convolution $\int_0^t e^{(t-s)A}\,\d (\sigma W_s+bs)$ for all $t>0$ and $x\in \R^d$ with $b\in \R^d$, $\si\in \R^{d\times d}$ symmetric and positive semidefinite and a standard Brownian motion $(W_t)_{t\geq 0}$ on $\R^d$, \eqref{eq.transition.intro} resembles the celebrated Mehler formula for the transition semigroup of the Ornstein-Uhlenbeck process, given by the SDE
 \[
   \d X_t= (AX_t+b)\, \d t+\si\, \d W_t.
 \]
 We refer to Bochagev et al.\ \cite{zbMATH00897938} and Fuhrman and R\"ockner \cite{zbMATH01436912} for generalizations of the classical Mehler formula to infinite-dimensional and non-Gaussian settings, respectively.\ We also point out that \eqref{eq.transition.intro} is also somewhat akin to the so-called Trotter-Lie product formula or splitting method for partial differential equations, cf.\ Engel and Nagel \cite[Corollary III.5.8]{engel2000one} and Zagrebnov et al.\ \cite[Proposition 1.9.35]{zbMATH07851043}.
 
 In this paper, we investigate (refined versions of) the following two main questions related to the families $(\mu_t)_{t>0}$ and $(\psi_t)_{t>0}$:
 \begin{itemize}
\item What are sufficient (and necessary) conditions, for arbitrary families $(\mu_t)_{t>0}$ and $(\psi_t)_{t>0}$, for the Chernoff product $(P_{t/n}^n f)_{n\in \N}$ to converge for all $t>0$ and $f\in \Cb(\R^d)$?
\item Which are the limits that arise from the Chernoff product or, more precisely, can we obtain an explicit characterization of the limiting dynamics?
 \end{itemize}
 A major challenge in the context of the classical Chernoff approximation is the identification of a suitable core for the generator of the limiting dynamics.\ While this is achievable for the L\'evy case, i.e., $\psi_t(x)=x$ for all $t>0$ and $x\in \R^d$, it poses a major challenge in the case of a general family $(\psi_t)_{t>0}$.\ Another difficulty lies in the fact that, already for the case of an Ornstein-Uhlenbeck process, it is known that the transition semigroup is not strongly continuous on the space of bounded uniformly continuous functions with the topology of uniform convergence, see, for instance, Tessitore and Zabczyk \cite{zbMATH01626172} for a discussion in the framework of Trotter-Lie product formulae for transition semigroups.\ In our analysis, we therefore adopt a recently developed variant of the classical Chernoff approximation, cf.\ Blessing et al.\ \cite{blessing2022convex} and Blessing et al.\ \cite{blessing2022convergence}, on the space $\Cb(\R^d)$ with the mixed topology, cf.\ Wiweger \cite{wiweger} and Fremlin et al.\ \cite{haydon} as well as Kunze \cite{kunze}, Goldys and Kocan \cite{kocan}, and Goldys et al.\ \cite{Goldys2022mixed} for surveys on semigroups in mixed or strict topologies and their relation to Markov processes, which fall into the more general framework of so-called bi-continuous semigroups.\ In this context, we refer to K\"uhnemund \cite{kuhnemund2001} for a general theory and approximation results for
bi-continuous semigroups, Albanese and K\"uhnemund \cite{zbMATH01876814} for Trotter–Kato and Lie–Trotter product formulae for locally equicontinuous semigroups on sequentially complete locally convex spaces, and Albanese and Mangino \cite{zbMATH02057702} for a Trotter-Kato theorem and Chernoff approximation for bi-continuous semigroups.\ Moreover, we refer to K\"uhnemund and van Neerven \cite{zbMATH02111542} for a Lie-Trotter product formula for the Ornstein-Uhlenbeck semigroup on real separable Banach spaces, Butko \cite{zbMATH06871300} for an overview of Chernoff approximations for Feller semigroups on the space of all continuous functions vanishing at infinity, including a particular representation of the approximating family as a pseudo-differential operator w.r.t.\ the symbol of the differential operator in the spirit of the method of ``freezing the coefficients'' as well as approximations for multiplicative dynamics, and Butko \cite{zbMATH07238029} for Chernoff approximations of subordinate semigroups.

In this work, we consider teh following three conditions:
\begin{itemize}
\item a boundedness condition \eqref{cond.M} that
\[
 \sup_{h\in (0,h_0)}\frac1h\Bigg( \int_{\R^d} {1\wedge |y|^2}\,\mu_h(\d y)+\bigg|\int_{\{|y|\leq 1\}} y\,\mu_h(\d y)\bigg|\Bigg)<\infty\quad \text{for some }h_0>0,
\]
\item a tightness condition \eqref{cond.T} that, for all $\ep>0$, there exists $M_\ep>0 $ such that
	   \[
	       \limsup_{h\downarrow0}\frac{\mu_h\big(\big\{y\in \R^d\,\big|\, |y|>M_\ep\big\}\big)}h< \ep,
           \]
           \item a uniform Lipschitz condition \eqref{cond.D} that there exist $\omega\geq 0$, $\delta>0$, and $h_0>0$ such that, for all $u\in \R^d$ with $|u|\leq \delta$,
	\[
		\sup_{h\in (0,h_0)}\sup_{x\in \R^d}\frac{|\psi_h(x+u)-\psi_h(x)-u|}{h}\leq \omega |u|,
	\]
    and $\limsup_{h\downarrow0}\frac{|\psi_h(0)|}{h}<\infty$. 
\end{itemize}
Our first main result, Theorem \ref{thm.main.mehler}, then shows that the conditions \eqref{cond.M}, \eqref{cond.T}, and \eqref{cond.D} guarantee that every null sequence in $(0,\infty)$ has a further subsequence $(h_n)_{n\in \N}$ such that $P_{h_n}^{k_n}f_n$ converges, uniformly on compacts, to the transition semigroup of a L\'evy process with drift at time $t=\lim_{n\to \infty} k_nh_n\in [0,\infty)$, evaluated in $f=\lim_{n\to \infty} f_n\in \Cb(\R^d)$ (in the mixed topology).\ This result is based on a detailed analysis of the conditions \eqref{cond.M}, \eqref{cond.T}, and \eqref{cond.D} in Section \ref{sec.M}, Section \ref{sec.T}, and Section \ref{sec.D}, respectively. In particular, adopting techniques from the proof of Courr\`ege's theorem, see, e.g., \cite[Theorem 4.5.21]{jacob2001_volume_I}, we show that Condition \eqref{cond.M} is equivalent to a compactness criterion in the sense that, for every null sequence in $(0,\infty)$, there exist a L\'evy tiplet $(b,\si,\nu)$, a constant $c\geq 0$, and a subsequence $(h_n)_{n\in \N}$ such that, for every $f\in \Ccinf(\R^d)$ and $x\in \R^d$, the difference quotient
\[
\int_{\R^d} \frac{f(x+y)-f(x)}{h_n}\,\mu_{h_n}(\d y)
\]
converges pointwise to the generator of a killed L\'evy process with L\'evy triplet $(b,\si,\nu)$ and killing $c$, see Theorem \ref{prop.ex.levytriplet}, which is the second main result.

We point out that the conditions \eqref{cond.M}, \eqref{cond.T}, and \eqref{cond.D}, and even their stronger variants \eqref{cond.Mstar} and \eqref{cond.Dstar} that ensure convergence of the Chernoff-Mehler approximation for every null sequence in $(0,\infty)$ to the same limit, see Corollary \ref{thm.main.mehler.stronger}, are satisfied for the following examples:
\begin{itemize}
    \item transition probabilities of L\'evy processes starting in zero, see Example \ref{ex.levy},
    \item transition probabilities of stochastic convolutions starting in zero, see Example \ref{ex.OU},
    \item stochastic finite difference approximations for second-order differential operators, leading to the Central Limit Theorem, see Example \ref{ex.clt},
    \item flows for Lipschitz ODEs, see Example \ref{ex.diffusion},
    \item  Euler schemes and general Runge-Kutta methods for Lipschitz ODEs, see Example \ref{ex.euler} and Example \ref{ex.runge}, respectively.
\end{itemize}

The rest of the paper is organized as follows.\ In Section \ref{sec.setup}, we introduce the notation and present the first main result, Theorem \ref{thm.main.mehler} and its corollaries. In Section \ref{sec.M}, we discuss equivalent formulations of Condition \eqref{cond.M}, including the second main result Theorem \ref{prop.ex.levytriplet}. Section \ref{sec.T} examines equivalences for the combination of Condition \eqref{cond.M} and Condition \eqref{cond.T}. Section \ref{sec.D} studies the interplay between the conditions \eqref{cond.M}, \eqref{cond.T}, and \eqref{cond.D}.\ The proofs of the main result and its corollaries are contained in Section \ref{sec.proofs.main}. Appendix \ref{app.C} discusses the existence of suitable smooth cut-off functions, which play an integral role in the proof of the first main result, Appendix \ref{app.semigroup} provides an abstract approximation result for the infinitesimal behavior of families of operators on Banach spaces, and, in Appendix \ref{app.levydrift}, we prove some auxiliary results for L\'evy processes with drift.

\section{Setup and First Main Result}\label{sec.setup}
Throughout, let $d\in \N$ and $\R^d$ be endowed with the Euclidean norm $|\cdot|$, the standard inner product $\langle\cdot,\cdot\rangle$, and the Borel $\sigma$-algebra $\Bc(\R^d)$.\ We denote the set of all probability measures on $\Bc(\R^d)$  by $\Pc(\R^d)$, and we consider the space $\Cb(\R^{d})$ of all bounded and continuous functions $\R^d\to\R$, endowed with the mixed topology, cf.\ \cite{haydon, wiweger}.\ We point out that the mixed topology is the Mackey topology of the dual pair $(\Cb,{\rm ca})$, where ${\rm ca}$ denotes the space of all countably additive signed Borel measures on $\R^d$ of finite variation.\

In this work, we restrict our attention to sequential properties related to the mixed topology.\ Mainly because, for positive linear functionals, continuity in the mixed topology is equivalent to sequential continuity and even continuity from above or below, cf.\ \cite{Nendel2022lsc} for a discussion in a more general context.\ For a sequence $(f_n)_{n\in\N}\subset \Cb(\R^d)$ and  $f\in \Cb(\R^d)$, $f=\lim_{n\to \infty} f_n$ in the mixed topology if
\begin{equation}\label{eq.conv.mixed}
\sup_{n\in\N} \|f_n\|_\infty<\infty\quad\text{and}\quad \lim_{n\to\infty}\sup_{|x|\le r}|f_n(x)-f(x)|=0\quad\text{for all }r\ge 0. 
\end{equation}
Here and throughout, $\|f\|_\infty:=\sup_{x\in\R^d}|f(x)|$ denotes the supremum norm of $f\in \Cb(\R^d)$. Analogously, for a family $(f_t)_{t>0}\subset \Cb(\R^d)$ and $f\in \Cb(\R^{d})$, we write $f=\lim_{h\downarrow 0}f_h$ in the mixed topology if $f=\lim_{n\to \infty}f_{h_n}$ in the mixed topology for every null sequence $(h_n)_{n\in \N}\subset (0,\infty)$.

In addition to $\Cb(\R^d)$, we also consider the following function spaces:
\begin{itemize}
    \item the space $\Cc(\R^d)$ of all $f\in \Cb(\R^d)$ with compact support and its closure $\Cnt_0(\R^d)$ w.r.t.\ the supremum norm,
    \item the space $\BUC(\R^d)$ of all bounded uniformly continuous functions $\R^d\to \R$,
    \item the spaces $\Cb^k(\R^d)$, $\Cc^k(\R^d)$, $\Cnt_0^k(\R^d)$, and $\BUC^k(\R^d)$ of all functions $f\in \Cb(\R^d)$ with $\partial^\alpha f\in \Cb(\R^d)$, $\partial^\alpha f\in \Cc(\R^d)$, $\partial^\alpha f\in \Cnt_0(\R^d)$, and $\partial^\alpha f\in \BUC(\R^d)$ for all $\al\in \N_0^d$ with $|\al|\leq k$, respectively,
    \item $\Cbinf(\R^d):=\bigcap_{k\in \N}\Cb^k(\R^d)$ and $\Ccinf(\R^d):=\bigcap_{k\in \N}\Cc^k(\R^d)$,
    \item the space $\Lipb(\R^d)$ of all functions $f\in \Cb(\R^d)$ with
\begin{equation}\label{eq.lip} \|f\|_{\Lip}:=\sup_{x_1\neq x_2}\frac{|f(x_1)-f(x_2)|}{|x_1-x_2|}<\infty, \end{equation}
    \item The space $L_p(\R^d)$ of all (equivalence classes of) Borel measurable functions $f\colon \R^d\to \R$ with
\[
\|f\|_p:=\bigg(\int_{\R^d} |f(x)|^p\,\d x\bigg)^{1/p}<\infty,
\]
for $p\in [1,\infty)$, and the space $L_\infty(\R^d)$ of all (equivalence classes of) bounded Borel measurable functions $f\colon \R^d\to \R$, endowed with the (essential) supremum norm $\|\cdot\|_{\infty}$,
\item the Sobolev space $W_p^k(\R^d)$, for $p\in [1,\infty]$, of all $f\in L_p(\R^d)$ with weak derivative $\partial^\alpha f\in L_p(\R^d)$ for all $\al\in \N_0^d$ with $|\al|\leq k$.
\end{itemize}

For a L\'evy triplet $(b,\sigma,\nu)$, we define $L_{(b,\sigma,\nu)}\colon \Cb^2(\R^d)\to \Cb(\R^d)$ by
\begin{equation}\label{eq.def.levygenerator}
\big(L_{(b,\sigma,\nu)} f\big)(x)=\big\langle b,\nabla f(x)\big\rangle+\frac12\trace\big(\sigma \nabla^2f(x)\big)+\int_{\R^d\setminus\{0\}} f(x+y)-f(x)-\big\langle\nabla f(x),h(y)\big\rangle \,\nu(\d y)
\end{equation}
for all $f\in \Cb^2(\R^d)$ and $x\in \R^d$ with  $h(y):=y\eins_{[0,1]}\big(|y|\big)$ for all $y\in \R^d$.\ Moreover, for a L\'evy triplet $(b,\sigma,\nu)$ and an $\om$-Lipschitz function $F\colon \R^d\to \R^d$ with $\om\geq 0$, i.e., 
		\begin{equation}\label{eq.Fgloballip}
			|F(x_1)-F(x_2)|\leq \omega|x_1-x_2|\quad\text{for all }x_1,x_2\in \R^d,
		\end{equation}
we consider the operator $A_{F,(b,\sigma,\nu)}\colon D(A_{F,(b,\sigma,\nu)})\to \Cb(\R^d)$, given by
\[
 D(A_{F,(b,\sigma,\nu)}):=\big\{f\in \Cb^2(\R^d)\,\big|\, \langle \nabla f,F\rangle \in \Cb(\R^d)\big\}
\]
and
\[
\big(A_{F,(b,\sigma,\nu)} f\big)(x)=\big\langle \nabla f(x), F(x)\big\rangle+\big(L_{(b,\sigma,\nu)} f\big)(x)\quad \text{for all }f\in D(A_{F,(b,\sigma,\nu)})\text{ and }x\in \R^d.
\]

Throughout, let $(\mu_t)_{t>0}\subset \Pc(\R^{d})$ be a family of probability measures on $\Bc(\R^d)$.\ We consider the following condition:
\begin{enumerate}
    \item[(M)]\makeatletter\def\@currentlabel{M}\makeatother\label{cond.M} There exists $h_0>0$ such that
	   \begin{equation}
           \notag
	       \sup_{h\in (0,h_0)}\frac1h\Bigg( \int_{\R^d} {1\wedge |y|^2}\,\mu_h(\d y)+\bigg|\int_{\{|y|\leq 1\}} y\,\mu_h(\d y)\bigg|\Bigg)<\infty. 
	   \end{equation}
\end{enumerate} 
Moreover, we consider a deterministic flow, given in terms of a family $(\psi_t)_{t>0}$ of functions $\R^d\to\R^d$, satisfying the following condition:
\begin{enumerate}
	\item[(D)]\makeatletter\def\@currentlabel{D}\makeatother\label{cond.D} There exist $\omega\geq 0$, $\delta>0$, and $h_0>0$ such that, for all $u\in \R^d$ with $|u|\leq \delta$,
	\begin{equation}\label{eq.biton}
		\sup_{h\in (0,h_0)}\sup_{x\in \R^d}\frac{|\psi_h(x+u)-\psi_h(x)-u|}{h}\leq \omega |u|.
	\end{equation}   
    Moreover, $\limsup_{h\downarrow0}\frac{|\psi_h(0)|}{h}<\infty$. 
\end{enumerate}
Observe that the condition $\limsup_{h\downarrow0}\frac{|\psi_h(0)|}{h}<\infty$ can always be enforced by considering $\overline\psi_t(x):=\psi_t(x)-\psi_t(0)$ instead of $\psi_t(x)$ for all $t>0$ and $x\in \R^d$.\ Moreover, we point out that there is no relation between the conditions \eqref{cond.M} and \eqref{cond.D}, so that the choice of the family $(\psi_t)_{t>0}$ is completely independent of the family $(\mu_t)_{t>0}$.\ In particular, the choices $\mu_t=\de_0$ or $\psi_t(x)=x$ for all $t>0$ and $x\in \R^d$ are always admissible, and correspond to the cases of Koopman semigroups or transition semigroups of L\'evy processes, respectively.

In addition to the conditions \eqref{cond.M} and \eqref{cond.D}, we also consider the following tightness condition for the family $(\mu_t)_{t>0}$:
 \begin{enumerate}
	\item[(T)]\makeatletter\def\@currentlabel{T}\makeatother\label{cond.T} For all $\ep>0$, there exists $M_\ep>0 $ such that
	   \begin{equation}\label{eq.Ass.M2}
	       \limsup_{h\downarrow0}\frac{\mu_h\big(\big\{y\in \R^d\,\big|\, |y|>M_\ep\big\}\big)}h< \ep.
	   \end{equation}
 \end{enumerate}
For a detailed discussion of conditions \eqref{cond.M}, \eqref{cond.T}, and \eqref{cond.D}, we refer to Section \ref{sec.M}, Section \ref{sec.T}, and  Section \ref{sec.D}, respectively.

In the sequel, we consider the family $(P_t)_{t> 0}$ of operators $\Cb(\R^d)\to \Cb(\R^d)$, given by  
\begin{equation}\label{eq:reference}
	\big(P_tf\big)(x):=\int_{\R^d} f\big(\psi_t(x)+y\big)\,\mu_t(\d y)
\end{equation}
for $t> 0$, $f\in \Cb(\R^d)$, and $x\in \R^d$.\ 
The following theorem is the main result of this paper.

\begin{theorem}\label{thm.main.mehler}
	Assume that conditions \eqref{cond.M}, \eqref{cond.T}, and \eqref{cond.D} are satisfied.\ Then, for every null sequence in $(0,\infty)$, there exist a subsequence $(h_n)_{n\in \N}$, a L\'evy process $(Y_t)_{t\geq0}$ on some probability space $(\Omega,\mathcal F,\P)$, and an $\omega$-Lipschitz function $F\colon \R^d\to \R^d$ such that,
	for all $t\geq 0$, $f\in \Cb(\R^d)$, $(k_n)_{n\in \N}\subset \N$ with $k_nh_n\to t$ as $n\to \infty$, $(f_n)_{n\in \N}\subset \Cb(\R^d)$ with $f_n\to f$ in the mixed topology as $n\to \infty$, and $r\geq 0$,
	\[
	\sup_{|x|\leq r}\Big|\big(P_{h_n}^{k_n}f_n\big)(x)- \E_\P[f(X_t^x)]\Big|\to 0\quad \text{as }n\to \infty,
	\]
	where, for all $x\in \R^d$, the process $(X_t^x)_{t\geq 0}$ is the unique strong solution to the L\'evy SDE
	\begin{equation}\label{eq.levy.sde}
	\d X_t^x= F(X_t^x)\,\d t+\d Y_t\quad \text{with}\quad X_0^x=x.
	\end{equation}
	Moreover, if $f\in \Cb(\R^d)$ such that $\big(\frac{P_{h_n}f-f}{h_n}\big)_{n\in \N}$ converges in the mixed topology, then
    \[
    \lim_{h\downarrow0}\frac{\E_\P[f(X_{h}^\cdot)]-f}{h}=\lim_{n\to \infty}\frac{P_{h_n}f-f}{h_n}\quad \text{in the mixed topology.}
    \]
\end{theorem}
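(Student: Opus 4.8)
The plan is to prove the theorem via the abstract Chernoff-type machinery announced in the introduction, treating the conditions \eqref{cond.M}, \eqref{cond.T}, and \eqref{cond.D} as inputs to three separate analytic steps, culminating in an application of the abstract approximation result of Appendix \ref{app.semigroup}. The overall architecture has four stages: (i) extract a convergent subsequence of L\'evy data via the compactness criterion of Theorem \ref{prop.ex.levytriplet}; (ii) extract a convergent subsequence of the drift data and produce the $\omega$-Lipschitz $F$; (iii) identify the candidate limit semigroup as the transition semigroup of the SDE \eqref{eq.levy.sde} and verify the Chernoff hypotheses on a suitable core; and (iv) transfer the abstract (norm/mixed-topology) convergence into the asserted locally uniform convergence of $P_{h_n}^{k_n}f_n$, handling simultaneously the perturbation $f_n \to f$ and the time discretization $k_n h_n \to t$.

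\textbf{Steps (i) and (ii): extracting the data.}
First I would invoke Theorem \ref{prop.ex.levytriplet}: since Condition \eqref{cond.M} holds, given any null sequence, there is a subsequence (which I relabel $(h_n)_{n\in\N}$), a L\'evy triplet $(b,\sigma,\nu)$, and a killing constant $c\ge 0$ such that the difference quotients $\int_{\R^d}\frac{f(x+y)-f(x)}{h_n}\,\mu_{h_n}(\d y)$ converge pointwise, for $f\in\Ccinf(\R^d)$, to the generator of a killed L\'evy process with data $(b,\sigma,\nu)$ and killing $c$. Here Condition \eqref{cond.T} is exactly what forces the killing to vanish, $c=0$, and upgrades this to a genuine (conservative) L\'evy generator $L_{(b,\sigma,\nu)}$; this is the content of the Section \ref{sec.T} analysis, so I would cite it to conclude $c=0$ and that $(b,\sigma,\nu)$ is the triplet of an honest L\'evy process $(Y_t)_{t\ge 0}$. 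Next, for the drift, Condition \eqref{cond.D} gives a uniform (in $h$, $x$) Lipschitz-type bound on $\frac{\psi_h(x+u)-\psi_h(x)-u}{h}$; I would apply the Section \ref{sec.D} analysis to pass to a further subsequence along which $\frac{\psi_{h_n}(x)-x}{h_n}$ converges, locally uniformly, to an $\omega$-Lipschitz vector field $F\colon\R^d\to\R^d$ (the bound \eqref{eq.biton} propagated to the limit yields precisely the global Lipschitz estimate \eqref{eq.Fgloballip}, and $\limsup_h|\psi_h(0)|/h<\infty$ controls $F(0)$). Combining, on the core $\Ccinf(\R^d)$ I obtain
\[
\lim_{n\to\infty}\frac{P_{h_n}f-f}{h_n}=\langle\nabla f,F\rangle+L_{(b,\sigma,\nu)}f=A_{F,(b,\sigma,\nu)}f
\]
pointwise, with a uniform (in $n$) bound on the difference quotients coming from \eqref{cond.M} and \eqref{cond.D}.

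\textbf{Step (iii): identifying and realizing the limit semigroup.}
Because $F$ is globally Lipschitz and $(Y_t)$ is L\'evy, the SDE \eqref{eq.levy.sde} has a unique strong solution $(X_t^x)_{t\ge 0}$, and the auxiliary results of Appendix \ref{app.levydrift} give that $T_t f:=\E_\P[f(X_t^\cdot)]$ defines a bi-continuous (mixed-topology) Feller-type semigroup whose generator extends $A_{F,(b,\sigma,\nu)}$, with $\Ccinf(\R^d)$ a core. The strategy is then to feed $(P_t)$, the semigroup $(T_t)$, and the core $\Ccinf(\R^d)$ into the abstract Chernoff/Trotter-Kato result of Appendix \ref{app.semigroup}: the hypotheses required are (a) the pointwise-on-a-core convergence of $\frac{P_{h_n}f-f}{h_n}\to A_{F,(b,\sigma,\nu)}f$ just established, (b) a stability/contraction-type bound $\|P_{h}^{k}\|\le M$ uniform in the relevant range (immediate here since each $P_h$ is a positivity-preserving Markovian operator, hence an $\|\cdot\|_\infty$-contraction, so $M=1$), and (c) the equicontinuity in the mixed topology supplied by \eqref{cond.M}, \eqref{cond.T}, and \eqref{cond.D}. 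The abstract result then yields $P_{h_n}^{k_n}g\to T_t g$ in the mixed topology whenever $k_n h_n\to t$, for $g$ in the core, and the contraction bound plus density propagates this to all $g\in\Cb(\R^d)$.

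\textbf{Step (iv), the perturbation $f_n\to f$, and the main obstacle.}
To replace the fixed $g$ by the varying sequence $f_n\to f$ (in the mixed topology), I would estimate
\[
\bigl\|P_{h_n}^{k_n}f_n-T_t f\bigr\|_{C(\overline B_r)}\le\bigl\|P_{h_n}^{k_n}(f_n-f)\bigr\|_{C(\overline B_r)}+\bigl\|P_{h_n}^{k_n}f-T_t f\bigr\|_{C(\overline B_r)},
\]
where $\overline B_r=\{|x|\le r\}$. The second term vanishes by Step (iii). The first term is the crux: since $f_n\to f$ in the mixed topology we have $\sup_n\|f_n\|_\infty<\infty$ and $f_n\to f$ uniformly on compacts, but $P_{h_n}^{k_n}$ transports mass according to the pushforward $\psi$-dynamics and the convolution by $\mu_{h_n}$, so controlling $\sup_{|x|\le r}|P_{h_n}^{k_n}(f_n-f)(x)|$ requires a \emph{mixed-topology equicontinuity} of the approximating products, i.e.\ a tightness estimate guaranteeing that the measures representing $P_{h_n}^{k_n}\delta_x$, for $|x|\le r$ and $k_n h_n\le t+1$, are uniformly tight. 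This is exactly where \eqref{cond.T} (tail control of $\mu_h$), \eqref{cond.M} (second-moment-type control), and \eqref{cond.D} (no explosion of the flow, via the $\omega$-Lipschitz bound controlling $|\psi_h(x)|\le(1+\omega h)|x|+|\psi_h(0)|$ and hence a Gr\"onwall bound on the iterated flow) combine: they yield a compact $K=K_{r,t,\eps}$ carrying all but $\eps$ of the mass uniformly in $n$, so that on $K$ the bound $\sup_K|f_n-f|\to 0$ applies and off $K$ the bound $2\sup_n\|f_n\|_\infty\cdot\eps$ is small. I expect \textbf{this uniform tightness of the iterated products}, together with the attendant Gr\"onwall estimate for the composed flow maps $\psi_{h_n}\circ\cdots\circ\psi_{h_n}$, to be the genuinely delicate point, since it must hold uniformly over $k_n$ with $k_n h_n$ near $t$; the remaining time-discretization error (comparing $k_n h_n$ to $t$) is then absorbed by the strong continuity of $(T_t)$ in the mixed topology.

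\textbf{The ``moreover'' clause.}
For the final infinitesimal statement, suppose $f\in\Cb(\R^d)$ is such that $\bigl(\frac{P_{h_n}f-f}{h_n}\bigr)_{n\in\N}$ converges in the mixed topology to some $g$. On the one hand, by the semigroup convergence just proved, $\frac{T_h f-f}{h}$ is the mixed-topology limit of the difference quotients of the product semigroup as $h\downarrow 0$; on the other hand, the abstract approximation result of Appendix \ref{app.semigroup} identifies the limit of $\frac{P_{h_n}f-f}{h_n}$ with the generator applied to $f$, so that $g$ lies in the (mixed-topology) domain of the generator of $(T_t)$ and equals $\lim_{h\downarrow 0}\frac{T_h f-f}{h}=\lim_{h\downarrow 0}\frac{\E_\P[f(X_h^\cdot)]-f}{h}$. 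Matching the two expressions gives the claimed identity; here the only subtlety is to ensure both limits are taken in the same topology, which is guaranteed by the bi-continuity (mixed-topology strong continuity) of $(T_t)$ from Appendix \ref{app.levydrift}.
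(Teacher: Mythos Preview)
Your overall architecture---extract the L\'evy triplet and the drift $F$ along a subsequence, verify Chernoff-type hypotheses, and then identify the limit with the SDE semigroup---matches the paper's, and you correctly isolate the uniform tightness of the iterated kernels as the technical heart of the argument. There are, however, two genuine issues in your execution.

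First, and most importantly, you reverse the order of ``obtain the limit semigroup'' and ``identify it'', and this reversal rests on an unproven claim. You assert that $\Ccinf(\R^d)$ is a core for the generator of the SDE transition semigroup $(T_t)$ in the mixed topology and then invoke a Chernoff theorem with that core. The paper neither proves nor uses such a core statement; Appendix~\ref{app.levydrift} only shows that the generator of $(T_t)$ \emph{extends} $A_{F,(b,\sigma,\nu)}$ on suitable $f$ (Proposition~\ref{prop.transition.levydrift}), which is much weaker. Instead, the paper applies the abstract Chernoff result \cite[Theorem~2.9]{blessing2022convergence}, which---once the equicontinuity, Lipschitz, and generator-convergence hypotheses of \cite[Assumption~2.8]{blessing2022convergence} are verified---produces \emph{a priori} some strongly continuous Feller semigroup $(S_t)$ as the limit of $P_{h_n}^{k_n}$, together with the statement that any $f$ with convergent difference quotients lies in $D(A)$. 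Only afterwards is $(S_t)$ identified with $(T_t)$ via the comparison result Corollary~\ref{cor.comparison}, whose proof (via \cite[Theorem~4.9]{blessing2022convex}) is itself nontrivial and plays precisely the role of the missing core argument.

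Second, you cite Appendix~\ref{app.semigroup} as the abstract Chernoff tool, but Proposition~\ref{prop.chernoff.classical} is a Banach-space statement (it is used only for Corollary~\ref{thm.main.mehler.stronger.Lp}, the $L_p$/$\BUC$ case). The mixed topology on $\Cb(\R^d)$ is not normable, so you cannot feed $(P_{h_n})$ into that proposition for the main theorem; the correct reference is the external result \cite[Theorem~2.9]{blessing2022convergence}. Relatedly, your tightness sketch in Step~(iv) is morally right, but in the paper the required equicontinuity enters as condition~(iii) of \cite[Assumption~2.8]{blessing2022convergence}, verified not by a direct Gr\"onwall estimate on the iterated flow but by producing, for each $\varepsilon>0$ and $R\ge 0$, a cutoff $\chi\in\Ccinf(\R^d)$ with $\|P_{h_n}\chi-\chi\|_\infty\le\varepsilon h_n$, which in turn relies on Lemma~\ref{lem.smoothcutoff} and Proposition~\ref{prop.derivative}~c).
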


For $\mu\in \Pc(\R^d)$ and $k\in \N$, we use the notation
\[
\mu^{\ast k}:=\underbrace{\mu\ast\ldots\ast\mu}_{k\text{ times}}.
\]
A direct consequence of Theorem \ref{thm.main.mehler}, choosing $\psi_t(x)=x$ for all $t>0$ and $x\in \R^d$, is the following corollary.

\begin{corollary}\label{thm.main.levy}
    Assume that conditions \eqref{cond.M} and \eqref{cond.T} are satisfied.\ Then, for every null sequence in $(0,\infty)$, there exist a subsequence $(h_n)_{n\in \N}$ and a L\'evy process $(Y_t)_{t\geq0}$ on some probability space $(\Omega,\mathcal F,\P)$ such that
    \[
    \mu_{h_n}^{\ast k_n}\to \P\circ Y_t^{-1}\quad \text{in distribution as }n\to \infty
    \]
    for all sequences $(k_n)_{n\in \N}\subset \N$ with $k_nh_n\to t\in [0,\infty)$ as $n\to \infty$.
\end{corollary}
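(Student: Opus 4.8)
The plan is to derive this corollary directly from Theorem \ref{thm.main.mehler} by specializing to $\psi_t=\id$. First I would observe that this choice satisfies condition \eqref{cond.D} in a degenerate way: since $\psi_h(x+u)-\psi_h(x)-u=0$ for all $h,x,u$, the supremum in \eqref{eq.biton} vanishes, so \eqref{cond.D} holds with $\om=0$ and arbitrary $\de,h_0>0$; moreover $\psi_h(0)=0$, whence $\limsup_{h\downarrow0}|\psi_h(0)|/h=0<\infty$. Thus, under the hypotheses \eqref{cond.M} and \eqref{cond.T}, all three conditions of Theorem \ref{thm.main.mehler} are in force with $\om=0$, and the theorem produces a subsequence $(h_n)_{n\in\N}$, a L\'evy process $(Y_t)_{t\ge0}$, and a $0$-Lipschitz---hence constant---function $F\equiv c\in\R^d$.

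Next I would unwind the iterated operator. With $\psi_t=\id$, equation \eqref{eq:reference} reduces to $(P_tf)(x)=\int_{\R^d}f(x+y)\,\mu_t(\d y)$, which is convolution against $\mu_t$. By Fubini's theorem and associativity of convolution, an induction on $k$ yields $(P_t^kf)(x)=\int_{\R^d}f(x+y)\,\mu_t^{\ast k}(\d y)$ for all $k\in\N$. In particular, evaluating at $x=0$,
\[
\big(P_{h_n}^{k_n}f\big)(0)=\int_{\R^d}f\,\d\mu_{h_n}^{\ast k_n}.
\]

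On the other side, since $F\equiv c$ is constant, the L\'evy SDE \eqref{eq.levy.sde} admits the explicit solution $X_t^x=x+ct+Y_t$. Setting $\widetilde Y_t:=ct+Y_t$ again defines a L\'evy process (a deterministic linear drift preserves the L\'evy property), and $X_t^0=\widetilde Y_t$. I would then specialize the conclusion of Theorem \ref{thm.main.mehler} by taking the constant sequence $f_n=f$, which trivially converges to $f$ in the mixed topology, and choosing $r=0$ so that only the point $x=0$ is tested:
\[
\bigg|\int_{\R^d}f\,\d\mu_{h_n}^{\ast k_n}-\E_\P\big[f(\widetilde Y_t)\big]\bigg|\to0\quad\text{as }n\to\infty.
\]
Since this holds for every $f\in\Cb(\R^d)$, the Portmanteau theorem identifies the limit as weak convergence $\mu_{h_n}^{\ast k_n}\to\P\circ\widetilde Y_t^{-1}$ in distribution; renaming $\widetilde Y$ as the L\'evy process $Y$ of the statement completes the argument.

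The main obstacle here is bookkeeping rather than analysis: one must confirm that the $0$-Lipschitz drift forces $F$ to be constant and can be absorbed into the limiting L\'evy process, and that testing against the full space $\Cb(\R^d)$ (rather than, say, bounded Lipschitz functions) is precisely convergence in distribution. All of the genuine work---the compactness extraction, the identification of the L\'evy triplet, and the passage to the limit in the Chernoff product---has already been carried out in Theorem \ref{thm.main.mehler}, so no new estimates are required.
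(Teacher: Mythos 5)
Your proposal is correct and follows essentially the same route as the paper, which likewise obtains the corollary by applying Theorem \ref{thm.main.mehler} with $\psi_t(x)=x$ and $f_n=f$, and then invoking the definition of convergence in distribution. The only cosmetic difference is that you absorb a possible constant drift $F\equiv c$ into the limiting L\'evy process, whereas in the paper's construction $F$ arises as the limit of $(\psi_{h_n}(x)-x)/h_n$ and hence vanishes identically for $\psi_t=\id$; either way the conclusion is the same.
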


We now consider the following stronger versions of the conditions \eqref{cond.M} and \eqref{cond.D}:
\begin{enumerate}
\item[(M$^*$)]\makeatletter\def\@currentlabel{M$^*$}\makeatother\label{cond.Mstar} The limit
	\begin{equation}\label{eq.M1star}
	\lim_{h\downarrow 0} \int_{\R^d}\frac{\ph(y)-\ph(0)}{h}\, \mu_h(\d y)\in \R\quad \text{exists for all }\ph\in \Ccinf(\R^d).
	\end{equation}
    \item[(D$^*$)]\makeatletter\def\@currentlabel{D$^*$}\makeatother\label{cond.Dstar} In addition \eqref{eq.biton}, for all $x\in \R^d$,
	\[
	F(x):=\lim_{h\downarrow 0} \frac{\psi_h(x)-x}{h}\quad\text{exists in }\R^d.
	\]
\end{enumerate} 
We then have the following corollary.

\begin{corollary}\label{thm.main.mehler.stronger}
	Assume that conditions \eqref{cond.Mstar}, \eqref{cond.T}, and \eqref{cond.Dstar} are satisfied.\ Then, there exists a unique (in law) L\'evy process $(Y_t)_{t\geq0}$ on some probability space $(\Omega,\mathcal F,\P)$ such that, for every null sequence $(h_n)_{n\in \N}\subset (0,\infty)$, all $t\geq 0$, $f\in \Cb(\R^d)$, $(k_n)_{n\in \N}\subset \N$ with $k_nh_n\to t$ as $n\to \infty$, $(f_n)_{n\in \N}\subset \Cb(\R^d)$ with $f_n\to f$ in the mixed topology as $n\to \infty$, and $r\geq 0$,
	\[
	\sup_{|x|\leq r}\Big|\big(P_{h_n}^{k_n}f_n\big)(x)- \E_\P[f(X_t^x)]\Big|\to 0\quad \text{as }n\to \infty,
	\]
	where $(X_t^x)_{t\geq 0}$ is the unique strong solution to \eqref{eq.levy.sde} for all $x\in \R^d$.\
	If $f\in \Cb(\R^d)$ such that $\lim_{h\downarrow 0}\frac{P_{h}f-f}{h}$ exists in the mixed topology, then
    \[
     \sup_{h>0} \bigg\|\frac{\E_\P[f(X_{h}^\cdot)]-P_hf}{h}\bigg\|_\infty<\infty\quad\text{and}\quad
    \lim_{h\downarrow0}\sup_{|x|\leq r}\bigg|\frac{\E_\P[f(X_{h}^x)]-(P_hf)(x)}{h}\bigg|=0
    \]
    for all $r\geq0$.
\end{corollary}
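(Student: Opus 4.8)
The goal is to deduce Corollary \ref{thm.main.mehler.stronger} from Theorem \ref{thm.main.mehler} by showing that, under the stronger conditions \eqref{cond.Mstar} and \eqref{cond.Dstar}, the subsequential limits obtained in the theorem are in fact independent of the chosen null sequence, so that the full family converges. First I would observe that \eqref{cond.Mstar} implies \eqref{cond.M} and \eqref{cond.Dstar} implies \eqref{cond.D}; indeed, \eqref{cond.Mstar} yields convergence of the difference quotients against every $\ph\in \Ccinf(\R^d)$, which via the compactness criterion (Theorem \ref{prop.ex.levytriplet}) forces Condition \eqref{cond.M}, while the supremum bound \eqref{eq.biton} is common to both \eqref{cond.D} and \eqref{cond.Dstar}, and the existence of the pointwise limit $F(x)=\lim_{h\downarrow0}(\psi_h(x)-x)/h$ in \eqref{cond.Dstar} gives $\limsup_{h\downarrow0}|\psi_h(0)|/h<\infty$ upon taking $x=0$. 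Hence Theorem \ref{thm.main.mehler} applies and every null sequence admits a subsequence along which the asserted uniform-on-compacts convergence holds, with some L\'evy process $(Y_t)_{t\geq 0}$ and some $\om$-Lipschitz $F$.

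The crux is uniqueness of the limit. Here the stronger hypotheses pin down both ingredients of the limiting dynamics unambiguously. By Theorem \ref{prop.ex.levytriplet}, the subsequential L\'evy triplet $(b,\si,\nu)$ and killing constant $c$ are characterized by pointwise convergence of the difference quotients $\int_{\R^d}\frac{f(x+y)-f(x)}{h_n}\,\mu_{h_n}(\d y)$ for $f\in \Ccinf(\R^d)$; but \eqref{cond.Mstar} asserts that the full limit $\lim_{h\downarrow0}\int_{\R^d}\frac{\ph(y)-\ph(0)}{h}\,\mu_h(\d y)$ already exists for every $\ph\in \Ccinf(\R^d)$ (and, by translation, for $x\ne 0$ as well), so the limiting functional is the same along every subsequence. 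This determines $(b,\si,\nu)$ and $c$ uniquely, and in particular forces $c=0$ so that $(Y_t)_{t\geq 0}$ is a genuine (non-killed) L\'evy process, unique in law. Similarly, \eqref{cond.Dstar} prescribes the drift field $F$ as the genuine pointwise limit of $(\psi_h(x)-x)/h$, which is manifestly sequence-independent. With $(Y_t)_{t\geq 0}$ and $F$ thus fixed, the SDE \eqref{eq.levy.sde} has a unique strong solution $(X_t^x)_{t\geq 0}$ with a well-defined law, so the limit $\E_\P[f(X_t^x)]$ in Theorem \ref{thm.main.mehler} does not depend on the subsequence. A standard subsequence argument then upgrades subsequential convergence to convergence of the entire null sequence: if $P_{h_n}^{k_n}f_n$ did not converge uniformly on some $\{|x|\leq r\}$ to $\E_\P[f(X_t^\cdot)]$, one could extract a subsequence staying bounded away from the limit, yet Theorem \ref{thm.main.mehler} provides a further subsequence converging to the same limit, a contradiction.

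For the final infinitesimal statement, I would invoke the last assertion of Theorem \ref{thm.main.mehler}, which already gives, for $f$ such that $\lim_{h\downarrow 0}\frac{P_hf-f}{h}$ exists in the mixed topology, that $\lim_{h\downarrow0}\frac{\E_\P[f(X_h^\cdot)]-f}{h}=\lim_{h\downarrow0}\frac{P_hf-f}{h}$ in the mixed topology. Subtracting the two gives $\lim_{h\downarrow0}\frac{\E_\P[f(X_h^\cdot)]-P_hf}{h}=0$ in the mixed topology; by the characterization \eqref{eq.conv.mixed} of mixed convergence, this is precisely the claimed uniform boundedness $\sup_{h>0}\big\|\frac{\E_\P[f(X_h^\cdot)]-P_hf}{h}\big\|_\infty<\infty$ together with uniform-on-compacts convergence to zero, $\lim_{h\downarrow0}\sup_{|x|\leq r}\big|\frac{\E_\P[f(X_h^x)]-(P_hf)(x)}{h}\big|=0$ for all $r\geq 0$. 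The main obstacle, and the step demanding the most care, is verifying that \eqref{cond.Mstar} genuinely forces the \emph{same} L\'evy triplet and killing along every subsequence and thereby the same process in law — that is, cleanly extracting sequence-independence of $(b,\si,\nu,c)$ from the a priori compactness-only statement of Theorem \ref{prop.ex.levytriplet}, and confirming that the stronger limit condition rules out killing.
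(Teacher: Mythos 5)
Your proposal is correct and follows essentially the same route as the paper: Theorem \ref{thm.main.mehler} (whose proof already identifies the subsequential limit via Corollary \ref{cor.comparison}), combined with the sequence-independence of the L\'evy triplet under \eqref{cond.Mstar} (this is exactly Proposition \ref{prop.levy.uniqueness}) and of $F$ under \eqref{cond.Dstar}, followed by the subsequence principle. One small misattribution: it is Condition \eqref{cond.T}, via the last assertion of Theorem \ref{prop.ex.levytriplet}, and not Condition \eqref{cond.Mstar}, that rules out the killing $c$; since \eqref{cond.T} is among the hypotheses, this does not affect the validity of your argument.
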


Observe that, for all $f\in L_p(\R^d)$ and $\mu\in \Pc(\R^d)$,
\[
\int_{\R^d} f(\, \cdot+y)\,\mu(\d y)\in L_p(\R^d),
\]
see, e.g., Lemma \ref{lem.convolution}.\ This allows to understand $P_t$ as an operator $L_p(\R^d)\to L_p(\R^d)$ if $\psi_t(x)=x$ for all $t>0$ and $x\in \R^d$.

\begin{corollary}\label{thm.main.mehler.stronger.Lp}
	Assume that conditions \eqref{cond.Mstar} and \eqref{cond.T} are satisfied.\ Then, there exists a unique (in law) L\'evy process $(Y_t)_{t\geq0}$ on some probability space $(\Omega,\mathcal F,\P)$ such that
    \begin{enumerate}
    \item[(i)] for every null sequence $(h_n)_{n\in \N}\subset (0,\infty)$, all $p\in [1,\infty)$, $t\geq 0$, $f\in L_p(\R^d)$, $(k_n)_{n\in \N}\subset \N$ with $k_nh_n\to t$ as $n\to \infty$, and $(f_n)_{n\in \N}\subset L_p(\R^d)$ with $f_n\to f$ in $L_p(\R^d)$ as $n\to \infty$,
	\[
	\lim_{n\to \infty}\bigg\| \int_{\R^d} f(\, \cdot+y)\,\mu_{h_n}^{\ast k_n}(\d y)-\E_\P[f(\,\cdot +Y_t)]\bigg\|_p=0,
	\]
    \item[(ii)] for every null sequence $(h_n)_{n\in \N}\subset (0,\infty)$, all $t\geq 0$, $f\in \BUC(\R^d)$, $(k_n)_{n\in \N}\subset \N$ with $k_nh_n\to t$ as $n\to \infty$, and $(f_n)_{n\in \N}\subset \BUC(\R^d)$ with $f_n\to f$ w.r.t.\ $\|\cdot\|_\infty$ as $n\to \infty$,
	\[
	\lim_{n\to \infty}\bigg\| \int_{\R^d} f(\, \cdot+y)\,\mu_{h_n}^{\ast k_n}(\d y)-\E_\P[f(\,\cdot +Y_t)]\bigg\|_\infty=0.
	\]
    \end{enumerate}
    If $f\in L_p(\R^d)$ such that $\lim_{h\downarrow 0}\int_{\R^d}\frac{f(\, \cdot\, + y)-f}{h}\,\mu_h(\d y)\in L_p(\R^d)$ exists, then
    \[
     \lim_{h\downarrow0} \frac1h\bigg\| \int_{\R^d} f(\, \cdot+y)\,\mu_{h}(\d y)-\E_\P[f(\,\cdot +Y_h)]\bigg\|_p=0.
    \]
    If $f\in \BUC(\R^d)$ such that $\lim_{h\downarrow 0}\int_{\R^d}\frac{f(\, \cdot\, + y)-f}{h}\,\mu_h(\d y)\in \BUC(\R^d)$ exists, then
    \[
     \lim_{h\downarrow0} \frac1h\bigg\| \int_{\R^d} f(\, \cdot+y)\,\mu_{h}(\d y)-\E_\P[f(\,\cdot +Y_h)]\bigg\|_\infty=0.
    \]
\end{corollary}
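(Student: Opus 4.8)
My plan begins by specializing to $\psi_t(x)=x$, so that $P_tf=\int_{\R^d}f(\,\cdot+y)\,\mu_t(\d y)$ is a convolution operator and, by the elementary identity $P_t^kf=\int_{\R^d}f(\,\cdot+y)\,\mu_t^{\ast k}(\d y)$, the iterate $Q_n:=P_{h_n}^{k_n}$ is convolution against $\mu_{h_n}^{\ast k_n}$; write $T_tf:=\E_\P[f(\,\cdot+Y_t)]=\int_{\R^d}f(\,\cdot+y)\,(\P\circ Y_t^{-1})(\d y)$. Since condition \eqref{cond.D} holds trivially with $\omega=0$ and \eqref{cond.Dstar} with $F\equiv0$, Corollary \ref{thm.main.mehler.stronger} applies and yields the (unique in law) L\'evy process $(Y_t)_{t\ge0}$. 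The plan rests on three observations: (a) by Minkowski's integral inequality and translation invariance of Lebesgue measure, every $Q_n$ and every $T_t$ is a contraction on $L_p(\R^d)$ for $p\in[1,\infty)$ and on $\BUC(\R^d)$ (supremum norm); (b) both families are translation invariant; (c) evaluating Corollary \ref{thm.main.mehler.stronger} at $x=0$ gives $\int_{\R^d}f\,\d\mu_{h_n}^{\ast k_n}\to\E_\P[f(Y_t)]$ for all $f\in\Cb(\R^d)$, i.e.\ weak convergence $\mu_{h_n}^{\ast k_n}\to\P\circ Y_t^{-1}$, whence this sequence is tight: $\sup_n\mu_{h_n}^{\ast k_n}(\{|y|>M\})\to0$ as $M\to\infty$. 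By (a) and $\|Q_n(f_n-f)\|\le\|f_n-f\|$, both assertions (i) and (ii) reduce to proving the convergence for a single fixed $f$.

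For assertion (ii) I would upgrade the uniform-on-compacts (mixed-topology) convergence of Corollary \ref{thm.main.mehler.stronger} to uniform convergence on $\R^d$ using (b) together with the Arzel\`a--Ascoli theorem. Suppose, for contradiction, $\|Q_nf-T_tf\|_\infty\not\to0$ for some $f\in\BUC(\R^d)$; then along a subsequence there are points $x_n$ with $|(Q_nf)(x_n)-(T_tf)(x_n)|\ge\ep_0>0$. The translates $g_n:=f(\,\cdot+x_n)$ are bounded by $\|f\|_\infty$ and share the modulus of continuity of $f$, so Arzel\`a--Ascoli provides a further subsequence $g_{n_j}$ converging, uniformly on compacts (hence in the mixed topology), to some $g\in\Cb(\R^d)$. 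Applying Corollary \ref{thm.main.mehler.stronger} to $g_{n_j}\to g$ gives $(Q_{n_j}g_{n_j})(0)\to(T_tg)(0)$, while translation invariance yields $(Q_{n_j}g_{n_j})(0)=(Q_{n_j}f)(x_{n_j})$ and $(T_tg_{n_j})(0)=(T_tf)(x_{n_j})$; since $g_{n_j}\to g$ pointwise and boundedly, dominated convergence against the probability measure $\P\circ Y_t^{-1}$ gives $(T_tf)(x_{n_j})\to(T_tg)(0)$. Combining these, $(Q_{n_j}f)(x_{n_j})-(T_tf)(x_{n_j})\to0$, contradicting $\ep_0$.

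For assertion (i) I would use contractivity and density of $\Cc(\R^d)$ in $L_p(\R^d)$ to reduce to $f\in\Cc(\R^d)$ with $\supp f\subset B_\rho$, and then split $\|Q_nf-T_tf\|_p$ over $\{|x|\le R\}$ and its complement. On the ball, $\|(Q_nf-T_tf)\eins_{\{|x|\le R\}}\|_p\le|B_R|^{1/p}\sup_{|x|\le R}|Q_nf-T_tf|\to0$ by (ii), where $|B_R|$ is the Lebesgue measure of the ball. For the tail, since $f$ vanishes off $B_\rho$ one has $|(Q_nf)(x)|\le\|f\|_\infty\,\mu_{h_n}^{\ast k_n}(B_\rho-x)\le\|f\|_\infty$, so using $s^p\le s$ for $s\in[0,1]$ and Fubini,
\[
\int_{\{|x|>R\}}|(Q_nf)(x)|^p\,\d x\le\|f\|_\infty^p\,|B_\rho|\,\mu_{h_n}^{\ast k_n}\big(\{|y|>R-\rho\}\big),
\]
which is uniformly small for large $R$ by the tightness in (c); the identical bound holds for $T_tf$ with the fixed measure $\P\circ Y_t^{-1}$. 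Sending $n\to\infty$ and then $R\to\infty$ gives $\|Q_nf-T_tf\|_p\to0$, and density plus contractivity extend this to all $f\in L_p(\R^d)$ and to $f_n\to f$.

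For the two infinitesimal statements I would invoke the abstract result of Appendix \ref{app.semigroup}. First, $(T_t)_{t\ge0}$ is a $C_0$-contraction semigroup on $L_p(\R^d)$ (a standard fact for L\'evy convolution semigroups) and on $\BUC(\R^d)$: indeed $\|T_hf-f\|_\infty\le\E_\P[\omega_f(|Y_h|)]\to0$ as $h\downarrow0$ by stochastic continuity of $(Y_t)$ at $0$ and boundedness of the modulus of continuity $\omega_f$. Specializing (i) and (ii) to $h_n=h/n$, $k_n=n$, $f_n=f$ shows $P_{h/n}^nf\to T_hf$ in the respective norm, the Chernoff convergence hypothesis of the appendix. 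The abstract lemma then asserts that whenever $g:=\lim_{h\downarrow0}\frac{P_hf-f}{h}$ exists in $X\in\{L_p,\BUC\}$, one has $f\in D(\mathcal A)$ with $\mathcal Af=g$ for the generator $\mathcal A$ of $(T_t)$ on $X$, hence
\[
\frac{P_hf-T_hf}{h}=\frac{P_hf-f}{h}-\frac{T_hf-f}{h}\longrightarrow g-\mathcal Af=0\qquad(h\downarrow0),
\]
which is exactly the claimed vanishing of $\tfrac1h\|P_hf-T_hf\|$. The heart of that abstract step is the telescoping identity $P_{h/n}^nf-f=\sum_{j=0}^{n-1}P_{h/n}^j(P_{h/n}f-f)$, whose main term converges to $\int_0^hT_sg\,\d s$ so that $\frac{T_hf-f}{h}=\frac1h\int_0^hT_sg\,\d s\to g$. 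The single genuinely delicate point in the whole argument is the $\BUC$ upgrade from uniform-on-compacts to uniform convergence carried out in the second paragraph, since the $L_p$ part is a routine contraction-plus-tightness estimate and the infinitesimal statements are supplied by the appendix.
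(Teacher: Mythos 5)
Your proof is correct, but for assertions (i) and (ii) it takes a genuinely different route from the paper's. The paper deduces these from Proposition \ref{prop.levygenerator}, which upgrades the convergence of the difference quotients $\int_{\R^d}\frac{f(\,\cdot+y)-f}{h}\,\mu_h(\d y)\to L_{(b,\sigma,\nu)}f$ to the norms of $\BUC(\R^d)$ and $L_p(\R^d)$ on the cores $\BUC^2(\R^d)$ and $W_p^2(\R^d)$, and then invokes the classical Banach-space Chernoff product formula together with a standard stability argument for varying $f_n$ and $k_nh_n\to t$. You instead bootstrap directly from the mixed-topology statement of Corollary \ref{thm.main.mehler.stronger}: for $\BUC(\R^d)$ you exploit translation invariance of the convolution operators and apply Arzel\`a--Ascoli to the translates $f(\,\cdot+x_n)$ to convert locally uniform convergence into globally uniform convergence (the contradiction argument in your second paragraph is sound, since the subsequence $(h_{n_j})$ is again an admissible null sequence and $(T_tg_{n_j})(0)\to(T_tg)(0)$ by dominated convergence); for $L_p(\R^d)$ you combine contractivity (Lemma \ref{lem.convolution}), density of $\Cc(\R^d)$, and tightness of the weakly convergent sequence $\mu_{h_n}^{\ast k_n}$, and your Fubini tail estimate is correct. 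Your route makes transparent that (i) and (ii) are consequences of the already-proved $\Cb$-result and avoids redoing the generator computation in each Banach space; the paper's route is shorter on the page because it delegates to the classical theory and, via Proposition \ref{prop.levygenerator}, additionally records the core convergence on $W_p^2(\R^d)$ and $\BUC^2(\R^d)$, which it needs anyway. Both treatments handle the two infinitesimal statements the same way, through Proposition \ref{prop.chernoff.classical}.

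One detail you should make explicit in the last step: Proposition \ref{prop.chernoff.classical} requires a dense subset $D$ on which $\lim_{h\downarrow 0}\frac{P_hu-u}{h}$ exists (its proof uses this to control $\|P_{h_n}^k v-v\|$ for the limit $v$ of the difference quotients), and you verify the Chernoff convergence hypothesis but never name $D$. Take $D=\Ccinf(\R^d)$ in $L_p(\R^d)$ and $D=\BUC^2(\R^d)$ in $\BUC(\R^d)$; the required limits exist there by Proposition \ref{prop.levygenerator} combined with Proposition \ref{prop.levy.uniqueness}, which guarantees that the limit is along $h\downarrow 0$ and not merely along a subsequence. With that addition the argument is complete.
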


\section{Discussion of Condition (M)}\label{sec.M}

In this section, we discuss Condition \eqref{cond.M} from different perspectives, providing equivalent formulations.\ We start with 
the following a priori estimate, which plays a central role in the subsequent discussion.

\begin{lemma}\label{lem.apriori.gamma}
Assume that Condition \eqref{cond.M} is satisfied.
\begin{enumerate}
\item[a)] For all $M>0$,
\begin{equation}\label{eq.M2.weaker}
c_M:=\sup_{t>0}\frac{\mu_t\big(\big\{ y\in \R^d\,\big|\, |y|>M\big\}\big)}t<\infty
\end{equation}
and
\begin{equation}\label{eq.Cgamma}
C_M:=\sup_{t>0}\frac1t\Bigg(\bigg| \int_{\{|y|\leq M\}}y\,\mu_t(\d y)\bigg|+\int_{\{|y|\leq M\}} |y|^2\,\mu_t(\d y)\Bigg)<\infty.
\end{equation}
\item[b)] For all $M>0$, $f\in \Cb^2(\R^d)$, $x\in \R^d$, and $t>0$,
\begin{align}
	\notag \bigg|&\int_{\R^d}\frac{f(x+y)-f(x)}{t}\,\mu_t(\d y)\bigg|\leq  \int_{\{|y|>M\}}\frac{|f(x+y)-f(x)|}t\,\mu_t(\d y)\\
	\notag &\qquad \quad+\frac1t\Bigg( \big|\nabla f(x)\big|\bigg|\int_{\{|y|\leq M\}} y\,\mu_t(\d y)\bigg|+\int_{\{|y|\leq M\}} \bigg(\int_0 ^1\big|\nabla^2 f(x+sy)\big|\,\d s\bigg) |y|^2\,\mu_t(\d y)\Bigg)\\
	&\qquad\leq 
    c_M\sup_{|y|>M}|f(x+y)-f(x)| 
    +C_M \max\bigg\{ \big|\nabla f(x)\big|, \sup_{|y|\leq M}\big|\nabla^2 f(x+y)\big|\bigg\}.
	\label{eq.apriori.gamma}
\end{align}
\end{enumerate}
\end{lemma}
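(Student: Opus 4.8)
The plan is to treat the two parts separately, with part a) as the substantive step and part b) as a Taylor-expansion consequence of it.

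For part a), the key observation is that Condition \eqref{cond.M} only controls the difference quotients for $t\in(0,h_0)$, whereas the claimed bounds are uniform over all $t>0$; I would therefore split each supremum into the regimes $t\in(0,h_0)$ and $t\geq h_0$. For $t\geq h_0$ everything is trivial, since $\mu_t$ is a probability measure and $t$ is bounded below: $\frac{\mu_t(\{|y|>M\})}{t}\leq h_0^{-1}$, while $\frac1t\big|\int_{\{|y|\leq M\}}y\,\mu_t(\d y)\big|\leq M/h_0$ and $\frac1t\int_{\{|y|\leq M\}}|y|^2\,\mu_t(\d y)\leq M^2/h_0$. For $t\in(0,h_0)$ I would reduce the $M$-dependent quantities to the quantity appearing in \eqref{cond.M}, which is phrased with the truncation $1\wedge|y|^2$ and the first moment over $\{|y|\leq 1\}$, by elementary pointwise comparisons, distinguishing $M\geq1$ from $0<M<1$. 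For the tail $c_M$, on $\{|y|>M\}$ one has $1\wedge|y|^2\geq\min\{1,M^2\}$, giving $\mu_t(\{|y|>M\})\leq\min\{1,M^2\}^{-1}\int_{\R^d}1\wedge|y|^2\,\mu_t(\d y)$. For the truncated second moment, on $\{|y|\leq M\}$ one compares via $|y|^2\leq\max\{1,M^2\}(1\wedge|y|^2)$.

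The most delicate piece is the first moment over $\{|y|\leq M\}$: here I would write $\int_{\{|y|\leq M\}}y\,\mu_t(\d y)$ as the \eqref{cond.M}-controlled integral over $\{|y|\leq 1\}$ plus a correction over the annulus between $\min\{1,M\}$ and $\max\{1,M\}$, and dominate $|y|$ on that annulus by a constant multiple of $1\wedge|y|^2$, using $|y|\leq M(1\wedge|y|^2)$ on $\{1<|y|\leq M\}$ and $|y|\leq M^{-1}(1\wedge|y|^2)$ on $\{M<|y|\leq 1\}$. Dividing by $t$ and taking the supremum over $(0,h_0)$ then yields finiteness directly from \eqref{cond.M}.

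For part b), I would fix $M$, $f\in\Cb^2(\R^d)$, $x$, and $t$, and split the integral of the difference quotient at $|y|=M$. On $\{|y|>M\}$ the integrand is kept as is, producing the first summand. On $\{|y|\leq M\}$ I would apply the second-order Taylor formula with integral remainder, $f(x+y)-f(x)=\langle\nabla f(x),y\rangle+\int_0^1(1-s)\langle\nabla^2 f(x+sy)y,y\rangle\,\d s$, and bound the remainder by $|y|^2\int_0^1|\nabla^2 f(x+sy)|\,\d s$ after discarding the factor $1-s\leq1$. The triangle inequality then separates the gradient term $|\nabla f(x)|\,\big|\int_{\{|y|\leq M\}}y\,\mu_t(\d y)\big|$ from the Hessian term, giving the first inequality. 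For the second inequality I would bound the tail term using $\frac{\mu_t(\{|y|>M\})}{t}\leq c_M$ and, for the remaining two terms, pull out $\max\{|\nabla f(x)|,\sup_{|y|\leq M}|\nabla^2 f(x+y)|\}$ — noting that for $|y|\leq M$ and $s\in[0,1]$ one has $|\nabla^2 f(x+sy)|\leq\sup_{|z|\leq M}|\nabla^2 f(x+z)|$ — so that the remaining sum is exactly $\frac1t\big(\big|\int_{\{|y|\leq M\}}y\,\mu_t(\d y)\big|+\int_{\{|y|\leq M\}}|y|^2\,\mu_t(\d y)\big)\leq C_M$. This is precisely why the bound carries the single constant $C_M$ rather than $2C_M$.

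The main obstacle is the bookkeeping in part a), specifically relating the first moment over $\{|y|\leq M\}$ back to the $\{|y|\leq 1\}$ first moment and the $1\wedge|y|^2$ truncation uniformly in $t$; the case split $M\geq1$ versus $M<1$ and the sign of the annulus correction are where care is needed. Part b) is then a routine Taylor estimate, with the only mild subtlety being the organization of terms so that the combined moment integral reproduces the constant $C_M$ exactly.
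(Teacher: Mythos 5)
Your proposal is correct and follows essentially the same route as the paper: split the supremum at $t=h_0$ (with the $t\geq h_0$ regime trivial since $\mu_t$ is a probability measure), reduce the tail mass and truncated second moment on $(0,h_0)$ to the $1\wedge|y|^2$ integral via elementary pointwise comparisons, handle the first moment over $\{|y|\leq M\}$ as the $\{|y|\leq 1\}$ integral plus an annulus correction dominated by a multiple of $1\wedge|y|^2$, and derive part b) from Taylor's theorem with integral remainder. The only differences are cosmetic (the paper combines the tail and second-moment estimates into a single constant $\max\{M^2,1/M^2\}$, whereas you treat them separately), and your observation about why the final bound carries $C_M$ rather than $2C_M$ is accurate.
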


\begin{proof}
    Let $M>0$.\ Then, for $t\geq h_0$,
    \[
     \frac{\mu_t\big(\big\{ y\in \R^d\,\big|\, |y|>M\big\}\big)}t+\frac1t\Bigg(\bigg| \int_{\{|y|\leq M\}}y\,\mu_t(\d y)\bigg|+\int_{\{|y|\leq M\}} |y|^2\,\mu_t(\d y)\Bigg)\leq \frac{(1\vee M)+M^2}{h_0}.
     \]
    Moreover, for $h\in (0,h_0)$,
	\[
	   \int_{\{|y|\leq M\}} |y|^2\,\mu_h(\d y) +\mu_h\big(\big\{ y\in \R^d\,\big|\, |y|>M\big\}\big)\leq \max\bigg\{M^2,\frac1{M^2}\bigg\} \int_{\R^d}1\wedge |y|^2\,\mu_h(\d y)<\infty
	\]
	and
	\[
	   \bigg| \int_{\{|y|\leq M\}}y\,\mu_h(\d y)\bigg|\leq \bigg| \int_{\{|y|\leq 1\}}y\,\mu_h(\d y)\bigg|+\max \bigg\{M,\frac1M\bigg\} \int_{\R^d}1\wedge |y|^2\,\mu_h(\d y)<\infty,
	\]
	which proves part a).\ Part b) is now a direct consequence of Taylor's theorem with integral form of the remainder.
\end{proof}	

\begin{remark}\label{rem.neccesity_T_1}
 Note that, by Lemma \ref{lem.apriori.gamma} a), Condition \eqref{cond.M} implies that \eqref{eq.M2.weaker} is satisfied for all $M>0$.\
 However, Condition \eqref{cond.M} does \textit{not} imply Condition \eqref{cond.T}.\ Indeed, let $d=1$ and
 \[
  \mu_t:=\tfrac{1-t}{2}\big(\de_t+\de_{-t}\big)+t\de_{t^{-1}}\quad \text{for all }t\in (0,1],
 \]
 where $\de_x$ denotes the Dirac measure on $\Bc(\R)$ with barycenter $x\in \R$.\ Moreover, we set $\mu_t:=\mu_1$ for all $t>1$. Then, 
 $\int_{\{|y|\leq 1\}} y\,\mu_h(\d y)=0$ and
 \[
  \frac1h\int_{\R^d} {1\wedge |y|^2}\,\mu_h(\d y)=1+\frac{1-h}{2h}\big(h^2+(-h)^2\big)=h(1-h)\leq \frac{1}{4}
 \]
 for all $h\in (0,1)$, so that Condition \eqref{cond.M} is satisfied with $h_0=1$.\ On the other hand,
 \[
 \frac{\mu_h\big(\big\{y\in \R^d\,\big|\, |y|>M\big\}\big)}h=1
 \]
 for all $M> 0$ and $h\in \big(0,M\wedge\frac1M\big)$, so that
 \[
 \lim_{h\downarrow 0} \frac{\mu_h\big(\big\{y\in \R^d\,\big|\, |y|>M\big\}\big)}h=1\quad\text{for all }M>0.
 \]
\end{remark}	

The following lemma provides a useful equivalent formulation of Condition \eqref{cond.M}. 

\begin{lemma}\label{lem.equiv.ass.M}
    Condition \eqref{cond.M} is equivalent to
 	\begin{equation}\label{eq.M1.prime}
 		\sup_{t>0}\bigg|\int_{\R^d} \frac{\ph(y)-\ph(0)}t\,\mu_t(\d y)\bigg|<\infty\quad \text{for all }\ph\in \Ccinf(\R^d).\tag{M'}
 	\end{equation}
\end{lemma}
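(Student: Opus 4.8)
For the direction \eqref{cond.M} $\Rightarrow$ \eqref{eq.M1.prime}, I would start from the a priori estimate in Lemma \ref{lem.apriori.gamma}. Given $\ph\in \Ccinf(\R^d)$, apply part b) of that lemma with $f=\ph$, $x=0$, $t=h\in(0,h_0)$, and a fixed $M>0$ (say $M=1$). Since $\ph$ is smooth with compact support, the quantities $\sup_{|y|>M}|\ph(y)-\ph(0)|$, $|\nabla\ph(0)|$, and $\sup_{|y|\leq M}|\nabla^2\ph(y)|$ are all finite constants depending only on $\ph$. The estimate \eqref{eq.apriori.gamma} then bounds $\big|\int_{\R^d}\frac{\ph(y)-\ph(0)}{h}\,\mu_h(\d y)\big|$ by $c_M\cdot\text{const}+C_M\cdot\text{const}$, which is finite and uniform in $h\in(0,h_0)$ by \eqref{eq.M2.weaker} and \eqref{eq.Cgamma}. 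For the range $t\geq h_0$ the supremum is controlled trivially since $\frac{|\ph(y)-\ph(0)|}{t}\leq \frac{2\|\ph\|_\infty}{h_0}$. Taking the supremum over all $t>0$ gives \eqref{eq.M1.prime}.

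For the converse \eqref{eq.M1.prime} $\Rightarrow$ \eqref{cond.M}, the task is to reconstruct the two quantities in \eqref{cond.M}, namely $\frac1h\int 1\wedge|y|^2\,\mu_h(\d y)$ and $\frac1h\big|\int_{\{|y|\leq 1\}}y\,\mu_h(\d y)\big|$, from finitely many well-chosen test functions $\ph$. The natural approach is to pick smooth compactly supported functions that behave like $|y|^2$ near the origin and are bounded away from it: for instance, choose $\ph_0\in \Ccinf(\R^d)$ with $\ph_0(y)=|y|^2$ for $|y|\leq 1$ and $\ph_0$ bounded, which captures $\int 1\wedge|y|^2$ up to the tail mass $\mu_h(\{|y|>1\})/h$ that is controlled by another test function; and for the drift term, choose coordinate functions $\ph_j$ with $\ph_j(y)=y_j$ for $|y|\leq 1$ and compactly supported, so that $\int_{\{|y|\leq 1\}}y_j\,\mu_h(\d y)$ equals $\int \ph_j\,\mu_h(\d y)$ minus a correction supported on $\{|y|>1\}$. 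Each correction term is $O(\mu_h(\{|y|>1\})/h)$, which is itself controlled by a test function approximating $\eins_{\{|y|>1\}}$ from below and above.

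\textbf{The main obstacle} I anticipate is the tail mass: the functions $1\wedge|y|^2$ and $y\eins_{\{|y|\leq1\}}$ are not smooth and not compactly supported, so they cannot be used directly as test functions in \eqref{eq.M1.prime}. The crux is to show that $\frac1h\,\mu_h(\{|y|>1\})$ stays bounded using \eqref{eq.M1.prime}; once this is in hand, all the truncation corrections above are uniformly bounded, and the reconstruction closes. To control the tail mass I would take $\ph\in \Ccinf(\R^d)$ with $0\leq \ph\leq 1$, $\ph(0)=0$, $\ph\equiv 1$ on an annulus like $\{1\leq|y|\leq 2\}$, and $\ph$ vanishing for large $|y|$; then $\int \frac{\ph(y)}{h}\,\mu_h(\d y)$ dominates $\frac1h\,\mu_h(\{1\leq|y|\leq 2\})$, and combining with a dilated family of such bumps covering successive annuli (or more simply choosing a single $\ph$ that is $1$ on $\{|y|\geq 1\}$ up to a fixed radius and then truncated) yields the bound on the relevant tail. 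Care is needed because \eqref{eq.M1.prime} only sees compactly supported test functions, so the genuinely far tail must be handled by noting that $1\wedge|y|^2=1$ there and that the monotone convergence of bump functions $\ph\uparrow \eins_{\{|y|>1\}}$ transfers the uniform bound to the limit. I expect this tail estimate, rather than the near-origin Taylor analysis, to be the delicate step.
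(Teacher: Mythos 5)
Your forward direction is exactly the paper's argument (apply Lemma \ref{lem.apriori.gamma}~b) at $x=0$ with a fixed $M$, plus the trivial bound $2\|\ph\|_\infty/h_0$ for $t\ge h_0$), and your near-origin reconstruction for the converse is sound in outline. The gap is in the step you yourself flag as delicate: controlling $\mu_h(\{|y|>1\})/h$. A compactly supported bump that equals $1$ on $\{1\le|y|\le R\}$ only controls $\mu_h(\{1<|y|\le R\})/h$, and the proposed repair --- letting bumps $\ph_n\uparrow\eins_{\{|y|>1\}}$ and ``transferring the uniform bound to the limit'' by monotone convergence --- does not work: \eqref{eq.M1.prime} gives a finite constant $C_{\ph_n}$ for each fixed test function, with no uniformity in $n$, so monotone convergence only yields $\sup_{t}\mu_t(\{|y|>1\})/t\le\liminf_n C_{\ph_n}$, which may be infinite. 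As you use it (always with $\ph(0)=0$, so that $\ph-\ph(0)$ stays compactly supported), \eqref{eq.M1.prime} simply cannot see mass escaping to infinity; compare Remark \ref{rem.neccesity_T_1}, where mass of order $h$ sits at $h^{-1}$ and hence is invisible to any fixed compactly supported test function vanishing at $0$.

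The missing observation is that \eqref{eq.M1.prime} involves $\ph(y)-\ph(0)$ rather than $\ph(y)$, and that $\mu_t$ is a probability measure. Take $\chi\in\Ccinf(\R^d)$ with $\chi(0)=1$, $0\le\chi\le1$, and $\chi(y)=0$ for $|y|>1$. Then $\chi(0)-\chi(y)=1-\chi(y)\ge\eins_{\{|y|>1\}}(y)$ for \emph{every} $y$, including the far tail, so
\[
\frac{\mu_t\big(\{|y|>1\}\big)}{t}\;\le\;\int_{\R^d}\frac{1-\chi(y)}{t}\,\mu_t(\d y)\;=\;-\int_{\R^d}\frac{\chi(y)-\chi(0)}{t}\,\mu_t(\d y),
\]
and the right-hand side is uniformly bounded in $t$ by a single application of \eqref{eq.M1.prime} to $\ph=\chi$. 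This is precisely how the paper's proof proceeds: it bounds $1\wedge|y|^2\le|y|^2\chi(y)+(1-\chi(y))$ and $|y|\eins_{\{|y|\le1\}}(y)\big(1-\chi(y)\big)\le 1-\chi(y)$, so that every term is controlled either by a test function vanishing at the origin ($|y|^2\chi(y)$, $y\chi(y)$) or by the difference quotient of the single function $\chi$. With this one correction your argument closes and is essentially the paper's.
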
	

\begin{proof}[Proof]
	By Lemma \ref{lem.apriori.gamma} b), Condition \eqref{cond.M} implies \eqref{eq.M1.prime}.\ In order to prove the other implication, assume that \eqref{eq.M1.prime} is satisfied and let $\chi\in \Ccinf(\R^d)$ with $\chi(0)=1$, $0\leq \chi\leq 1$, and $\chi(x)=0$ for all $x\in \R^d$ with $|x|>1$, see, e.g., Lemma \ref{lem.smoothcutoff}.\ Then,
	\begin{align*}
		\sup_{t>0}\frac1t \int_{\R^d} &1\wedge |y|^2\,\mu_t(\d y)\leq \sup_{t>0}\frac1t \int_{\R^d} |y|^2\chi(y) +\big(1-\chi(y)\big)\,\mu_t(\d y)\\
		&= \sup_{t>0} \Bigg(\int_{\R^d} \frac{|y|^2\chi(y)}t\,\mu_t(\d y)+\bigg|\int_{\R^d} \frac{\chi(y)-\chi(0)}t\,\mu_t(\d y)\bigg|\Bigg)<\infty
	\end{align*}
    and
	\begin{align*}
		\sup_{t>0}\frac1t\bigg| \int_{\{|y|\leq 1\}} y\,\mu_t&(\d y)\bigg|\leq \sup_{t>0}\frac1t\Bigg(\bigg|\int_{\R^d} y\chi(y)\,\mu_t(\d y)\bigg|+\int_{\{|y|\leq 1\}} |y|\big(1-\chi(y)\big)\,\mu_t(\d y)\Bigg)\\
		&\quad\leq \sup_{t>0}\Bigg(\bigg|\int_{\R^d} \frac{y\chi(y)}{t}\,\mu_t(\d y)\bigg|+\bigg|\int_{\R^d} \frac{\chi(y)-\chi(0)}t\,\mu_t(\d y)\bigg|\Bigg)<\infty,
	\end{align*}
    where we used the fact that the maps $[x\mapsto|x|^2\chi(x)]\in \Ccinf(\R^d)$ and $[x\mapsto x\chi(x)]\in \Ccinf(\R^d)$ vanish at the origin. The proof is complete.
\end{proof}	

Condition \eqref{cond.M} entails a compactness condition for the family of measures $(\mu_t)_{t>0}$ as the following theorem shows. The proof adopts techniques from the proof of Courr\`ege's theorem, cf.\ \cite[Theorem 4.5.21]{jacob2001_volume_I}.

\begin{theorem}\label{prop.ex.levytriplet}
	Condition \eqref{cond.M} is satisfied if and only if every null sequence in $(0,\infty)$ has a subsequence $(h_n)_{n\in \N}$ such that
	\begin{equation}\label{eq.levygenerator}
		\lim_{n\to \infty} \int_{\R^d} \frac{f(x+y)-f(x)}{h_n}\,\mu_{h_n}=\big(L_{(b,\si,\nu)}f\big)(x) -cf(x) \quad \text{for all }f\in \Cc^2(\R^d)\text{ and }x\in \R^d,
	\end{equation}
    where $(b,\si,\nu)$ is a L\'evy triplet and $c\geq0$.\
    If Condition \eqref{cond.M} and Condition \eqref{cond.T} are satisfied, then $c=0$ in \eqref{eq.levygenerator}.
\end{theorem}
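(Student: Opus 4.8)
The plan is to prove the equivalence by establishing the two implications separately, and then to derive $c=0$ under \eqref{cond.T} as a refinement. The converse implication is the easy direction. Assuming the subsequential convergence \eqref{eq.levygenerator}, I would verify \eqref{cond.M} through its reformulation \eqref{eq.M1.prime} from Lemma \ref{lem.equiv.ass.M}. If \eqref{eq.M1.prime} failed, there would exist $\ph\in\Ccinf(\R^d)$ and a null sequence $(t_k)_{k\in\N}$ with $\big|\int_{\R^d}\frac{\ph(y)-\ph(0)}{t_k}\,\mu_{t_k}(\d y)\big|\to\infty$; but the hypothesis supplies a subsequence of $(t_k)$ along which this expression (take $f=\ph$ and $x=0$ in \eqref{eq.levygenerator}) converges to the finite number $\big(L_{(b,\si,\nu)}\ph\big)(0)-c\ph(0)$, a contradiction. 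The values for $t\geq h_0$ are automatically bounded since $\mu_t$ is a probability measure and $\ph$ is bounded, so only the behaviour as $t\downarrow 0$ is at issue.

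For the forward implication, assume \eqref{cond.M} and fix a null sequence. The key reduction is translation invariance: writing $\Lambda_h(g):=\int_{\R^d}\frac{g(y)-g(0)}{h}\,\mu_h(\d y)$, the quantity in \eqref{eq.levygenerator} equals $\Lambda_h\big(f(x+\,\cdot\,)\big)$, so it suffices to extract a subsequence $(h_n)_{n\in\N}$ along which $\Lambda_{h_n}(g)\to\Lambda(g)$ for all $g\in\Cc^2(\R^d)$, and to show $\Lambda(g)=\big(L_{(b,\si,\nu)}g\big)(0)-cg(0)$; evaluating at $g=f(x+\,\cdot\,)$ then produces the claim with $x$-independent coefficients. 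I would construct the triplet and killing by separating scales. For the L\'evy measure, set $\tilde\nu_h(\d y):=\frac{1\wedge|y|^2}{h}\,\mu_h(\d y)$; Condition \eqref{cond.M} bounds $\sup_h\tilde\nu_h(\R^d)$, so by Banach--Alaoglu and separability of $\Cnt_0(\R^d)$ there is a subsequence along which $\tilde\nu_{h_n}$ converges vaguely to a finite measure $\tilde\nu$, and $\nu(\d y):=(1\wedge|y|^2)^{-1}\tilde\nu(\d y)$ on $\R^d\setminus\{0\}$ is a L\'evy measure. For diffusion and drift, Lemma \ref{lem.apriori.gamma}a) gives uniform bounds on $\Sigma_h^{(\ep)}:=\frac1h\int_{\{|y|\le\ep\}}yy^\top\,\mu_h(\d y)$ and $b_h:=\frac1h\int_{\{|y|\le 1\}}y\,\mu_h(\d y)$; passing to a further subsequence (diagonally over $\ep=1/m$) yields $\sigma:=\lim_{\ep\downarrow0}\lim_n\Sigma_{h_n}^{(\ep)}\geq 0$ as the residual second moment concentrating at the origin, and $b$ as the limiting truncated mean, corrected by the $\nu$-contribution from the small-jump region. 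Finally the killing constant is the rate of mass escaping to infinity, $c:=\lim_{M\to\infty}\big(\overline c_M-\nu(\{|y|>M\})\big)$ with $\overline c_M:=\lim_n\frac{\mu_{h_n}(\{|y|>M\})}{h_n}$ (along a further subsequence, $M$ a $\nu$-continuity radius), which is $\geq 0$ because vague convergence gives $\nu(\{|y|>M\})\leq\liminf_n\frac{\mu_{h_n}(\{|y|>M\})}{h_n}$ for $M\geq 1$.

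With the triplet in hand, I would first verify \eqref{eq.levygenerator} on a countable $\Cnt^2$-dense subset of $\Ccinf(\R^d)$ via the diagonal extraction above, then extend to all $f\in\Cc^2(\R^d)$ using the a priori estimate Lemma \ref{lem.apriori.gamma}b): since $c_M,C_M<\infty$, the family $\{\Lambda_h\}_{h>0}$ is equicontinuous in the local $\Cnt^2$-norm uniformly in $h$, so a standard three-$\ep$ argument transfers the convergence and the limit $L_{(b,\si,\nu)}$ is likewise $\Cnt^2$-continuous. The convergence is organized by a second-order Taylor expansion at $x$: the part $\int_{\{|y|\le\ep\}}$ is handled via Taylor and the moment bounds $C_\ep$, the part $\int_{\{|y|>\ep\}}$ via the vague convergence of $\tilde\nu_h$ (the integrand is continuous, vanishes at $0$, and has controlled growth), and the remaining errors are driven to zero by letting $\ep\downarrow0$. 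The main obstacle is precisely this separation of scales near the origin: I must show that the mass of $\tilde\nu_h$ concentrating at $\{0\}$ in the vague limit reproduces \emph{exactly} the diffusion coefficient $\sigma$ without leaking into or double-counting $\nu$, and that the truncation $h(y)=y\eins_{[0,1]}(|y|)$ is assembled consistently into $b$. This uniform control of the $\ep\downarrow0$ limit is the delicate heart of the Courr\`ege-type argument.

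It remains to show $c=0$ under Condition \eqref{cond.T}. By the identification above, $\overline c_M$ is nonincreasing in $M$ and $\nu(\{|y|>M\})\to 0$ as $M\to\infty$, so $c=\lim_{M\to\infty}\overline c_M\leq\overline c_{M'}$ for every continuity radius $M'\geq 1$. Given $\ep>0$, Condition \eqref{cond.T} supplies $M_\ep>0$ with $\limsup_{h\downarrow0}\frac{\mu_h(\{|y|>M_\ep\})}{h}<\ep$, whence $\overline c_{M_\ep}<\ep$ and therefore $c\leq\overline c_{M_\ep}<\ep$. Since $\ep>0$ was arbitrary, $c=0$.
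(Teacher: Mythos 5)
Your proposal is correct and follows essentially the same Courr\`ege-type compactness argument as the paper: the contrapositive via Lemma \ref{lem.equiv.ass.M} for the easy direction, extraction of a vague limit of the rescaled measures weighted by $1\wedge|y|^2$ for the L\'evy measure, identification of $\sigma$ as the residual second moment at the origin and of $c$ as the mass escaping to infinity, and a Taylor expansion with the a priori bounds of Lemma \ref{lem.apriori.gamma} to assemble the limit. The only differences are cosmetic (Banach--Alaoglu on $\Cnt_0(\R^d)^*$ versus Prokhorov on the balls $\{|y|\leq k\}$; a density-plus-equicontinuity extension to $\Cc^2(\R^d)$ versus the paper's direct argument with the compactly supported remainder function $Tf$), and both handle the delicate separation of the diffusion part from the small jumps in the same way.
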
	

\begin{proof}
 First assume that \eqref{cond.M} is not satisfied.\ Then, by Lemma \ref{lem.equiv.ass.M}, there exists a function $\ph\in \Ccinf(\R^d)$ and a sequence $(t_n)_{n\in \N}\subset (0,\infty)$ such that
 \[
  \bigg|\int_{\R^d} \frac{\ph(y)-\ph(0)}{t_n}\,\mu_{t_n}(\d y)\bigg|\geq n\quad \text{for all }n\in \N.
 \]
 Since $$\bigg|\int_{\R^d} \frac{\ph(y)-\ph(0)}t\,\mu_t(\d y)\bigg|\leq \frac{2}{h_0}\|\ph\|_\infty\quad \text{for all }t\geq h_0>0,$$ it follows that $t_n\to 0$ as $n\to \infty$ and, by construction, the sequence $\big(\int_{\R^d} \frac{\ph(y)-\ph(0)}{h_n}\,\mu_{h_n}(\d y)\big)_{n\in \N}$ does not converge in $\R$ for any subsequence $(h_n)_{n\in \N}$ of $(t_n)_{n\in \N}$.

 Now, assume that Condition \eqref{cond.M} is satisfied and let $(t_n)_{n \in \N}\subset (0,\infty)$ be a null sequence.\ Let $k\in \N$ and $f\in \Cb(\R^d)$. Then, by Lemma \ref{lem.apriori.gamma} a),
	\[
	\sup_{t>0} \bigg|\int_{\{|y|\leq k\}} \frac{|y|^2}h f(y)\,\mu_t(\d y)\bigg|\leq \|f\|_\infty \sup_{h\in (0,h_0)}\int_{\{|y|\leq k\}} \frac{|y|^2}t \,\mu_t(\d y)\leq C_k\|f\|_\infty.
	\]
	Using Prokhorov's theorem and a diagonal argument, there exists a subsequence $(h_n)_{n \in \N}$ of $(t_n)_{n \in \N}$ and a sequence $(\eta_k)_{k\in \N}$ of finite Borel measures with
	$$\eta_k\big(\big\{y\in \R^d\,\big|\, |y|>k\big\}\big)=0\quad \text{for all }k\in \N$$
	and
	\begin{equation}\label{eq.convergence.eta_k}
	\int_{\R^d} f(y)\,\eta_k(\d y)=\lim_{n\to \infty} \int_{\{|y|\leq k\}}\frac{|y|^2}{h_n}f(y)\,\mu_{h_n}(\d y)\quad\text{for all }k\in \N \text{ and }f\in \Cb(\R^d).
	\end{equation}
	In particular, $\eta_k (B)\leq \eta_{k+1}(B)$ for all $k\in \N$ and $B\in \Bc(\R^d)$, so that $\eta(B):=\sup_{k\in \N}\eta_k(B)$, for all $B\in \Bc(\R^d)$, defines a locally finite measure $\eta\colon \Bc(\R^d)\to [0,\infty]$ with
	\[
	\int_{\R^d} f(y)\,\eta(\d y)=\lim_{n\to \infty}\int_{\R^d} \frac{|y|^2}{h_n} f(y)\,\mu_{h_n}(\d y)\in \R\quad\text{for all }f\in \Cc(\R^d),
	\]
    and 
	\begin{equation}\label{eq.monapprox.eta}
		\int_{\R^d} f(y)\,\eta(\d y)=\sup_{k\in \N}  \int_{\R^d} f(y)\,\eta_k(\d y)\quad\text{for all }f\in \Cb(\R^d)\text{ with }f\geq0.
	\end{equation}
    We define
    \[
	\nu(B):=\int_{B\setminus\{0\}} \frac{1}{|y|^2} \,\eta(\d y)\quad \text{for all }B\in \Bc(\R^d).
	\]
	Now, let $\chi\in \Ccinf(\R^d)$ with $\chi(x)=1$ for all $x\in \R^d$ with $|x|\leq 1$.\ After potentially passing to a subsequence, by Lemma \ref{lem.equiv.ass.M}, we may w.l.o.g.\ assume that
	\begin{align*} 
		b_0:=\lim_{n\to \infty}\frac1{h_n}\int_{\R^d} y \chi(y)\, \mu_{h_n}(\d y)\in \R^d\quad\text{and}\quad \si_0:=\lim_{n\to \infty}\frac1{h_n}\int_{\R^d} yy^T \chi(y)\, \mu_{h_n}(\d y)\in \R^{d\times d}
	\end{align*}
	exist. By Lemma \ref{lem.apriori.gamma} a), after potentially passing to yet another subsequence, using a diagonal argument, we may w.l.o.g.\ assume that
    \begin{equation}\label{eq.bound.c_k}
     \lim_{n\to \infty} \frac{\mu_{h_n}\big(\big\{y\in \R^d\,\big|\, |y|>k\big\}\big)}{h_n}\in [0,c_k]
    \end{equation}
    exists, and we define
    \begin{equation}\label{eq.def.c}
    c:=\inf_{k\in \N} \lim_{n\to \infty} \frac{\mu_{h_n}\big(\big\{y\in \R^d\,\big|\, |y|>k\big\}\big)}{h_n}=\lim_{k\to\infty}\lim_{n\to \infty} \frac{\mu_{h_n}\big(\big\{y\in \R^d\,\big|\, |y|>k\big\}\big)}{h_n}.
    \end{equation}
	Using first \eqref{eq.monapprox.eta} and \eqref{eq.def.c}, and then \eqref{eq.convergence.eta_k}, for all $\ep>0$, there exist $k\in \N$ and $n_0\in \N$ such that, for all $n\geq n_0$,
	\begin{align*}
		\bigg|\int_{\R^d} \frac{1- \chi(y)}{h_n}\, \mu_{h_n}(\d y)-\int_{\R^d\setminus \{0\}} &1- \chi(y)\, \nu(\d y)-c\bigg|< \frac\ep2\\
		&+\bigg|\int_{\{|y|\leq k\}} \frac{1- \chi(y)}{h_n}\, \mu_{h_n}(\d y)-\int_{\{|y|\leq k\}} \frac{1- \chi(y)}{|y|^2}\, \eta_k(\d y)\bigg|<\ep,
	\end{align*}
	which shows that
	\[
	\lim_{n\to \infty}\int_{\R^d} \frac{1- \chi(y)}{h_n}\, \mu_{h_n}(\d y)=\int_{\R^d\setminus \{0\}} 1- \chi(y)\, \nu(\d y)+c.
	\]
	Moreover, by construction,
	\[
	\int_{\{|y|\leq k\}} |y|^2\,\nu(\d y)=\eta_k(\R^d)=\lim_{n\to \infty}  \int_{\{|y|\leq k\}} \frac{|y|^2}{h_n}\,\mu_{h_n}(\d y)\in \R\quad \text{for all }k\in \N.
	\]
	We have therefore shown that
	\[
	\nu\big(\{0\}\big)=0\quad\text{and}\quad \int_{\R^d\setminus\{0\}} 1\wedge |y|^2\,\nu(\d y)<\infty.
	\]
	Now, let $f\in \Cc^2(\R^d)$, and define $(Tf)(0):=0$ and
	\[
	(Tf)(y):=\frac{1}{|y|^2}\bigg(f(y)-f(0)\chi(y)-\langle \nabla f(0),y\rangle \chi(y)-\frac12 y^T\nabla^2 f(0)y \chi(y)\bigg)
	\]
	for all $y\in \R^d\setminus\{0\}$.\ Then, by Taylor's theorem, $Tf\in \Cc(\R^d)$ and it follows that
	\begin{align*}
		\lim_{n\to \infty}&\int_{\R^d}\frac{f(y)-f(0)}{h_n}\,\mu_{h_n}(\d y)=-f(0)\bigg(\int_{\R^d} 1-\chi(y)\,\nu(\d y)+c\bigg)+\langle b_0,\nabla f(0)\rangle\\
        &+\frac12\trace\big(\sigma_0 \nabla^2f(0)\big)+\int_{\R^d\setminus\{0\}} f(y)-f(0)\chi(y)-\langle \nabla f(0),y\rangle \chi(y)-y^T\nabla^2 f(0)y \chi(y)\,\nu(\d y).
	\end{align*}
	Define
	\[
	b_1:= \int_{\R^d\setminus\{0\}} y \big(\chi(y)-\eins_{[0,1]}(|y|)\big)\,\nu(\d y)\in \R^{d\times d}\quad\text{and}\quad\sigma_1:=\int_{\R^d\setminus\{0\}} yy^T \chi(y)\,\nu(\d y)\in \R^{d\times d}.
	\]
	Then, $\si_0$ and $\si_1$ are symmetric and, by \eqref{eq.monapprox.eta},
	\[
	\xi^T\si_1\xi=\int_{\R^d\setminus\{0\}} |\langle y,\xi\rangle|^2 \chi(y)\,\nu(\d y)\leq  \lim_{n\to \infty}\int_{\R^d} \frac{|\langle y,\xi\rangle|^2}{h_n} \chi(y)\,\mu_{h_n}(\d y)=  \xi^T\si_0\xi\quad\text{for all }\xi\in \R^d.
	\]
	Therefore, $\si:=\si_0-\si_1\in \R^{d\times d}$ is symmetric and positive semidefinite.\ Moreover, we define $b:=b_0-b_1\in \R^d$. Then, we have shown that $(b,\si,\nu)$ is a L\'evy triplet and
	\[
	\lim_{n\to \infty}\int_{\R^d}\frac{f(x+y)-f(x)}{h_n}\,\mu_{h_n}(\d y)=(L_{(b,\si,\nu)}f)(x)-cf(x)\quad\text{for all }f\in \Cc^2(\R^d)\text{ and }x\in \R^d.
	\]
    By \eqref{eq.bound.c_k}, $c\geq0$ and $c=0$ if Condition \eqref{cond.T} is additionally satisfied, i.e., $\inf_{M>0} c_M=0$.
\end{proof}

We conclude this section with an priori estimate for the difference quotient.

\begin{lemma}\label{lem.diff.quotient.Lp}
 Assume that Condition \eqref{cond.M} is satisfied and let $p\in [1,\infty)$, $f\in W_p^2(\R^d)$, and $M>0$.\ Then, 
 \begin{equation}\label{eq.apriori.Lp}
\sup_{t>0}\bigg\|\int_{\R^d}\frac{f(\,\cdot+y)-f}{t}\,\mu_t(\d y)\bigg\|_p\leq 2c_M\|f\|_p+C_M\max\big\{\big\|\nabla f\big\|_p,\big\|\nabla^2f\big\|_p\big\}.
 \end{equation}
 \end{lemma}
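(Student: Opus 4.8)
The plan is to establish the bound first for smooth, compactly supported functions, where Taylor's theorem applies classically, and then to extend it to all of $W_p^2(\R^d)$ by density, exploiting that $\Ccinf(\R^d)$ is dense in $W_p^2(\R^d)$ for $p\in[1,\infty)$. Throughout, write $g_t:=\int_{\R^d}\frac{f(\,\cdot+y)-f}{t}\,\mu_t(\d y)$, which is a well-defined element of $L_p(\R^d)$ by Lemma~\ref{lem.convolution}, and split the integral defining $g_t$ at the sphere $|y|=M$ into a large-jump part over $\{|y|>M\}$ and a small-jump part over $\{|y|\leq M\}$.

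For the large-jump part I would use Minkowski's integral inequality together with the translation invariance of Lebesgue measure, which gives $\|f(\,\cdot+y)-f\|_p\leq 2\|f\|_p$ for every $y\in\R^d$. Hence
\[
\bigg\|\int_{\{|y|>M\}}\frac{f(\,\cdot+y)-f}{t}\,\mu_t(\d y)\bigg\|_p\leq \frac{2\|f\|_p}{t}\,\mu_t\big(\{|y|>M\}\big)\leq 2c_M\|f\|_p,
\]
which already produces the first term on the right-hand side of \eqref{eq.apriori.Lp}.

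For the small-jump part, assuming now $f\in\Ccinf(\R^d)$, I would insert the second-order Taylor expansion with integral remainder,
\[
f(x+y)-f(x)=\langle\nabla f(x),y\rangle+\int_0^1(1-s)\,y^T\nabla^2 f(x+sy)y\,\d s,
\]
and treat the two contributions separately. The linear part contributes $\frac1t\big|\int_{\{|y|\leq M\}}y\,\mu_t(\d y)\big|\,\|\nabla f\|_p$ after taking the $L_p$-norm in $x$. For the remainder I would apply Minkowski's integral inequality in the variables $(y,s)$ together with $\|\nabla^2 f(\,\cdot+sy)\|_p=\|\nabla^2 f\|_p$ (translation invariance) and $\int_0^1(1-s)\,\d s=\tfrac12$, obtaining the bound $\frac{1}{2t}\int_{\{|y|\leq M\}}|y|^2\,\mu_t(\d y)\,\|\nabla^2 f\|_p$. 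Bounding both factors $\|\nabla f\|_p$ and $\|\nabla^2 f\|_p$ by their maximum and using $\tfrac12\leq 1$ lets me absorb both small-jump contributions into $C_M\max\{\|\nabla f\|_p,\|\nabla^2 f\|_p\}$ via the definition \eqref{eq.Cgamma} of $C_M$. Combining this with the large-jump estimate and taking the supremum over $t>0$ yields \eqref{eq.apriori.Lp} for $f\in\Ccinf(\R^d)$.

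Finally, I would pass from $\Ccinf(\R^d)$ to $W_p^2(\R^d)$ by density: choose $f_n\in\Ccinf(\R^d)$ with $f_n\to f$ in $W_p^2(\R^d)$. For each \emph{fixed} $t>0$ the map $f\mapsto g_t$ is bounded on $L_p(\R^d)$ with operator norm at most $\tfrac2t$ (again by translation invariance), so the associated $g_t^{(n)}$ converge to $g_t$ in $L_p(\R^d)$, while the right-hand side of the estimate for $f_n$ converges to that for $f$ because $\|f_n\|_p$, $\|\nabla f_n\|_p$, and $\|\nabla^2 f_n\|_p$ converge to the corresponding norms of $f$. Passing to the limit for each fixed $t$ and then taking the supremum over $t>0$ completes the proof. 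The only genuinely delicate point is the use of Taylor's theorem, which is not directly available for $W_p^2$-functions; the density argument circumvents this, and the sole thing left to verify is the $L_p$-continuity (for fixed $t$) of the difference-quotient operator, which is immediate from translation invariance.
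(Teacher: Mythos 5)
Your proof is correct and follows essentially the same route as the paper: split the difference quotient at $|y|=M$, bound the large-jump part by $2c_M\|f\|_p$ via translation invariance, handle the small-jump part with Taylor's theorem with integral remainder and Minkowski's integral inequality to get the $C_M\max\{\|\nabla f\|_p,\|\nabla^2 f\|_p\}$ term, and conclude by density of $\Ccinf(\R^d)$ in $W_p^2(\R^d)$. Your explicit justification of the limit passage (boundedness of the difference-quotient operator on $L_p$ for fixed $t$) correctly fills in the density step that the paper leaves as a one-line remark.
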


 \begin{proof}
 First, let $f\in \Cc^2(\R^d)$. Lemma \ref{lem.apriori.gamma}, the triangle inequality, Lemma \ref{lem.convolution}, and Tonelli's theorem imply that, for all $t>0$,
	\begin{align*}
		\bigg(\int_{\R^d} \bigg|\int_{\R^d}\frac{f(x+y)-f(x)}{t}\,&\mu_t(\d y)\bigg|^p\,\d x\bigg)^{1/p}\\
        &\leq \bigg(\int_{\R^d} \bigg|\int_{\{|x|>M\}} \frac{f(x+y)-f(x)}t\, \mu_t(\d y)\bigg|^p\, \d x\bigg)^{1/p}\\
		&\quad + \bigg( \int_{\R^d}\big|\nabla f(x)\big|^p\bigg|\int_{\{|y|\leq M\}} \frac{y}t\,\mu_t(\d y)\bigg|^p\,\d x\bigg)^{1/p}\\
		&\quad +\bigg(\int_0^1\int_{\R^d}\bigg|\int_{\{|y|\leq M\}} \big|\nabla^2 f(x+sy)\big| \frac{|y|^2}t\,\mu_t (\d y)\bigg|^p\,\d x \,\d s\bigg)^{1/p}\\
		&\leq 2\|f\|_p\frac{\mu_h\big(\big\{ y\in \R^d\,\big|\, |y|>M\big\}\big)}t+\big\|\nabla f\big\|_p \bigg|\int_{\{|y|\leq M\}} \frac{y}{t}\,\mu_t(\d y)\bigg|\\
		&\quad + \big\|\nabla^2f\big\|_p \int_{\{|y|\leq M\}}\frac{|y|^2}t\,\mu_t (\d y)\\
		&\leq 2c_M\|f\|_p+C_M \max\big\{\big\|\nabla f\big\|_p,\big\|\nabla^2f\big\|_p\big\}.
	\end{align*}
    Now, the statement for $f\in W_p^2(\R^d)$ follows from the density of $\Ccinf(\R^d)$ in $W_p^2(\R^d)$, see, e.g., \cite[Corollary 3.23]{MR2424078}.
 \end{proof}

\section{Discussion of Condition (T)}\label{sec.T}

The aim of this section is to strengthen the convergence in \eqref{eq.levygenerator} in various directions.\ For this, it is unavoidable to assume Condition \eqref{cond.T}, as we will see in various examples below.\
We start with the following characterization of Condition \eqref{cond.T}.

\begin{lemma}\label{lem.equiv.ass.T}
 Assume that Condition \eqref{cond.M} is satisfied. Then, for all $\ep>0$, there exists $M_\ep>0$ such that \eqref{eq.Ass.M2} is satisfied if and only if  there exists a function $\chi_\ep\in \Ccinf(\R^d)$ with $\chi_\ep(0)=1$, $0\leq \chi_\ep\leq 1$, and 
 	\begin{equation}\label{eq.M2.prime}
 		\limsup_{h\downarrow 0}\bigg(-\int_{\R^d} \frac{\chi_\ep(y)-\chi_\ep(0)}h\,\mu_h(\d y)\bigg)< \ep.\tag{T'}
 	\end{equation}
\end{lemma}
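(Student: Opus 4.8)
The plan is to exploit the sign structure hidden in \eqref{eq.M2.prime}. Since $\chi_\ep(0)=1$, the quantity appearing there rewrites as
\[
-\int_{\R^d}\frac{\chi_\ep(y)-\chi_\ep(0)}{h}\,\mu_h(\d y)=\int_{\R^d}\frac{1-\chi_\ep(y)}{h}\,\mu_h(\d y),
\]
and the requirement $0\le\chi_\ep\le 1$ forces the integrand $1-\chi_\ep\ge 0$. Thus both the quantity in \eqref{eq.M2.prime} and the one in \eqref{eq.Ass.M2} are nonnegative, and the entire argument reduces to sandwiching the smooth profile $1-\chi_\ep$ between indicator functions of balls and comparing $\limsup$s. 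The only external input is the existence of suitable smooth cut-off functions, provided by Appendix \ref{app.C} (Lemma \ref{lem.smoothcutoff}); Condition \eqref{cond.M} is in force only as an ambient assumption, entering via Lemma \ref{lem.equiv.ass.M} to guarantee that the difference quotients under consideration are finite and bounded, so that all $\limsup$s live in $\R$.

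For the implication from \eqref{eq.Ass.M2} to \eqref{eq.M2.prime}, I would fix $\ep>0$, take the radius $M_\ep$ furnished by \eqref{eq.Ass.M2}, and choose $\chi_\ep\in\Ccinf(\R^d)$ with $0\le\chi_\ep\le1$, $\chi_\ep\equiv 1$ on $\{|y|\le M_\ep\}$, and $\supp\chi_\ep\subset\{|y|\le 2M_\ep\}$. Then $1-\chi_\ep(y)=0$ for $|y|\le M_\ep$ and $1-\chi_\ep(y)\le 1$ otherwise, so that $1-\chi_\ep\le \eins_{\{|y|>M_\ep\}}$ pointwise. Integrating against $\tfrac1h\mu_h$ and passing to the limit yields
\[
\limsup_{h\downarrow0}\int_{\R^d}\frac{1-\chi_\ep(y)}{h}\,\mu_h(\d y)\le \limsup_{h\downarrow0}\frac{\mu_h\big(\{|y|>M_\ep\}\big)}{h}<\ep,
\]
which is precisely \eqref{eq.M2.prime}.

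Conversely, for \eqref{eq.M2.prime} implying \eqref{eq.Ass.M2}, I would start from a function $\chi_\ep$ as in \eqref{eq.M2.prime} and pick $R>0$ with $\supp\chi_\ep\subset\{|y|\le R\}$. For $|y|>R$ we have $\chi_\ep(y)=0$, hence $1-\chi_\ep(y)=1$, while $1-\chi_\ep\ge0$ everywhere; therefore $\eins_{\{|y|>R\}}\le 1-\chi_\ep$ pointwise. Integrating against $\tfrac1h\mu_h$ gives $\tfrac{\mu_h(\{|y|>R\})}{h}\le\int_{\R^d}\tfrac{1-\chi_\ep(y)}{h}\,\mu_h(\d y)$, and taking the $\limsup$ shows that \eqref{eq.Ass.M2} holds with $M_\ep:=R$.

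Given how transparent the two indicator bounds are, I do not expect a genuine obstacle. The only points requiring care are the construction of a cut-off that is simultaneously identically $1$ on a prescribed ball, compactly supported, and valued in $[0,1]$ (deferred to Appendix \ref{app.C}), and the verification that $\limsup$ respects the pointwise domination of the integrands; monotonicity of the integral together with the nonnegativity of $1-\chi_\ep$ makes the latter automatic.
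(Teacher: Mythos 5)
Your proof is correct and follows essentially the same route as the paper: both directions rest on the pointwise sandwich $\eins_{\{|y|>R\}}\le 1-\chi_\ep\le \eins_{\{|y|>M_\ep\}}$ obtained from compact support on one side and from $\chi_\ep\equiv 1$ near the origin on the other. The only cosmetic difference is that for the direction \eqref{eq.Ass.M2}$\,\Rightarrow\,$\eqref{eq.M2.prime} the paper formally invokes the a priori estimate of Lemma \ref{lem.apriori.gamma} b) (whose gradient and Hessian terms vanish for this choice of $\chi_\ep$ at $x=0$), which reduces to exactly the direct domination argument you give.
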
	

	\begin{proof}[Proof]
	Let $\ep>0$ and $\chi_\ep\in \Ccinf(\R^d)$ with $\chi_\ep(0)=1$ and $0\leq \chi_\ep\leq 1$ such that \eqref{eq.M2.prime} is satisfied. Then, there exists $M_\ep> 0$ such that $\chi_\ep(x)=0$ for all $x\in \R^d$ with $|x|>M_\ep$, which implies that
	\[
	\limsup_{h\downarrow0}\frac{\mu_h\big(\big\{y\in \R^d\,\big|\, |y|>M\big\}\big)}h\leq \limsup_{h\downarrow 0}\bigg(\int_{\R^d} \frac{1-\chi_\ep(y)}h\,\mu_h(\d y)\bigg)< \ep.
	\]
    In order to prove the other implication, let $M_\ep>0$ such that \eqref{eq.Ass.M2}.\ Then, there exists a function $\chi_\ep\in \Ccinf(\R^d)$ with $\chi_\ep(x)=1$ for all $x\in \R^d$ with $|x|\leq M_\ep$ and $0\leq \chi_\ep\leq 1$, see, e.g., Lemma \ref{lem.smoothcutoff}. By Lemma \ref{lem.apriori.gamma} b) with $M=M_\ep$, $f=\chi_\ep$, and $x=0$, we find that
    \begin{align*}
     \limsup_{h\downarrow 0} \bigg(-\int_{\R^d}\frac{\chi_\ep(y)-\chi_\ep(0)}{h}\,\mu_h(\d y)\bigg)&=\limsup_{h\downarrow 0}\bigg|\int_{\R^d}\frac{\chi_\ep(y)-\chi_\ep(0)}{h}\,\mu_h(\d y)\bigg|\\
     &\leq \limsup_{h\downarrow 0}\frac{\mu_h\big(\big\{y\in \R^d\,\big|\,|y|>M_\ep\big\}\big)}h<\ep.
    \end{align*}
    The proof is complete.
\end{proof}

In a first step, we focus on extending the convergence in \eqref{eq.levygenerator} to the space $\Cb^2(\R^d)$.\ For this, it is unavoidable to assume Condition \eqref{cond.T} as the following remark indicates.

\begin{remark}\label{rem.neccesity_T_2}
We remark that, in order to obtain pointwise convergence of the difference quotients on $\Cb^\infty(\R^d)$ instead of $\Cc^2(\R^d)$, see Proposition \ref{prop.ex.levytriplet}, it is essential to assume Condition \eqref{cond.T}.\
To that end, we revisit the example given in Remark \ref{rem.neccesity_T_1}, i.e., we consider the case $d=1$, 
\[
 \mu_t:=\frac{1-t}{2}\big(\de_t+\de_{-t}\big)+t\de_{t^{-1}}\quad \text{for }t\in (0,1],
\]
$\mu_t:=\mu_1$ for $t>1$, and $h_0:=1$.\ Recall that the family $(\mu_t)_{t>0}$ satisfies Condition \eqref{cond.M} but \textit{not} Condition \eqref{cond.T}.\
Let $(h_n)_{n\in\N}\subset (0,1)$ with $h_n\to0$ as $n\to\infty$.\ Then, for any $f\in \Cb(\R)$ such that the sequence $\big(f(h_n^{-1})\big)_{n\in \N}\subset \R$ does not converge, it follows that
\[
\int_{\R}\frac{f(y)-f(0)}{h}\,\mu_h(\d y )=f\bigg(\frac1{h_n}\bigg)-f(0)\quad \text{for all }n\in \N,
\]
so that the sequence $\big(\int_{\R}\frac{f(y)-f(0)}{h_n}\,\mu_{h_n}(\d y )\big)_{n\in \N}$ does not converge.\

Moreover, the set of all $f\in \Cbinf(\R)$ for which the sequence $\big(f(h_n^{-1})\big)_{n\in \N}$ does not converge is sequentially dense in $\Cb(\R)$, endowed with the mixed topology. Indeed, let $f\in \Cb(\R)$ and $(a_n)_{n\in \N}\subset \R$ be a bounded sequence that does not converge and satisfies $a_n=a_m$ for all $m,n\in \N$ with $h_n=h_m$.\ Then, using a suitable mollification, for all $k\in \N$, there exists $f_k\in \Cbinf(\R)$ such that $$\sup_{|x|\leq k}|f(x)-f_k(x)|\leq \frac1k,$$ $f_k(h_n^{-1})=a_n$ for all $n\in \N$ with $h_n<\frac1k$, and $\sup_{k\in \N}\|f_k\|_\infty<\infty$.\footnote{Note that, for all $m\in \N$, the set of all $n\in \N$ such that $|h_n-h_m|<h_m$ is finite, since $h_n\to 0$ as $n\to \infty$.\ Hence, for all $m\in \N$, there exists some $\de>0$ such that $|h_n^{-1}-h_m^{-1}|\geq\de$ for all $n\in \N$ with $h_n\neq h_m$.}\ By construction, $f_k\to f$ in the mixed topology as $k\to \infty$ and the sequence $\big(f_k(h_n^{-1})\big)_{n\in \N}$ does not converge for any $k\in \N$.
\end{remark}

We have the following equivalent description of the conditions \eqref{cond.M} and \eqref{cond.T}.

\begin{proposition}\label{prop.conv.levy.mixed}
The following statements are equivalent.
\begin{enumerate}
    \item[(i)] The conditions \eqref{cond.M} and \eqref{cond.T} are satisfied.
    \item[(ii)] For every null sequence in $(0,\infty)$, there exists a subsequence $(h_n)_{n\in \N}$ and a L\'evy triplet $(b,\si,\nu)$ such that, for all $f\in \Cb^2(\R^d)$,
		\begin{equation}\label{eq.gen.levy.mixed}
		\lim_{n\to \infty} \int_{\R^d}\frac{f(\,\cdot+y)-f}{h_n}\,\mu_{h_n}(\d y) =L_{(b,\sigma,\nu)}f\quad \text{in the mixed topology.}
		\end{equation}
        \end{enumerate}
\end{proposition}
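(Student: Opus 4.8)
I will prove the equivalence by showing (i)$\Rightarrow$(ii) and (ii)$\Rightarrow$(i). The forward direction is the substantial one and builds directly on Theorem \ref{prop.ex.levytriplet}; the converse is a matter of reading off the two conditions from the limit \eqref{eq.gen.levy.mixed} by testing against suitable functions.

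\emph{Direction (i)$\Rightarrow$(ii).} Fix a null sequence in $(0,\infty)$. By Theorem \ref{prop.ex.levytriplet}, and since Condition \eqref{cond.T} is assumed, there is a subsequence $(h_n)_{n\in\N}$ and a L\'evy triplet $(b,\sigma,\nu)$ (with killing $c=0$) such that
\[
\lim_{n\to\infty}\int_{\R^d}\frac{f(x+y)-f(x)}{h_n}\,\mu_{h_n}(\d y)=\big(L_{(b,\sigma,\nu)}f\big)(x)
\]
pointwise for every $f\in\Cc^2(\R^d)$ and $x\in\R^d$. The task is to upgrade this to convergence in the mixed topology for every $f\in\Cb^2(\R^d)$, i.e.\ to establish \eqref{eq.conv.mixed}: a uniform sup-norm bound on the difference quotients together with uniform-on-compacts convergence. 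The uniform bound is immediate from the a priori estimate \eqref{eq.apriori.gamma} in Lemma \ref{lem.apriori.gamma}~b): for $f\in\Cb^2(\R^d)$ and any fixed $M>0$, the right-hand side of \eqref{eq.apriori.gamma} is bounded by $c_M(2\|f\|_\infty)+C_M\max\{\|\nabla f\|_\infty,\|\nabla^2 f\|_\infty\}$, uniformly in $x$ and $t$, so the family of difference quotients is uniformly bounded in $\|\cdot\|_\infty$. This settles the first requirement in \eqref{eq.conv.mixed}.

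\emph{The main obstacle: uniform-on-compacts convergence for $\Cb^2$.} The pointwise limit from Theorem \ref{prop.ex.levytriplet} is only available on compactly supported $f$, and I must extend it in two ways — to all of $\Cb^2$ and to uniform-on-compacts convergence. Here is where Condition \eqref{cond.T} does the essential work, controlling the mass that escapes to infinity. The plan is to decompose $f\in\Cb^2(\R^d)$ near a fixed compact $\{|x|\le r\}$ using a cut-off: write $f=f\chi_R+f(1-\chi_R)$ where $\chi_R\in\Ccinf(\R^d)$ equals $1$ on $\{|y|\le R\}$. For the compactly supported piece $f\chi_R\in\Cc^2$ one has pointwise convergence, which can be promoted to uniform-on-compacts convergence by an equicontinuity argument: the estimate \eqref{eq.apriori.gamma} applied on the shifted family $\{f(\cdot+x_0):|x_0|\le r\}$ shows the difference quotients are equi-Lipschitz-bounded in $x$ on compacts (via the $\nabla^2 f$ term), so pointwise convergence plus a compactness/Arzel\`a--Ascoli argument gives uniform-on-compacts convergence. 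For the tail piece $f(1-\chi_R)$, the term $c_M\sup_{|y|>M}|f(x+y)-f(x)|$ in \eqref{eq.apriori.gamma} together with Condition \eqref{cond.T} — specifically the tightness bound controlling $\frac{1}{h}\mu_h(\{|y|>M_\ep\})$ — shows this contribution can be made arbitrarily small uniformly in $n$ by choosing $R$ (hence $M$) large, matching the correspondingly small tail of $L_{(b,\sigma,\nu)}f$ guaranteed by $\int 1\wedge|y|^2\,\nu(\d y)<\infty$. Combining the two pieces gives \eqref{eq.gen.levy.mixed}.

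\emph{Direction (ii)$\Rightarrow$(i).} Suppose \eqref{eq.gen.levy.mixed} holds along a subsequence for every null sequence. Evaluating at a fixed point $x$ and restricting to $f\in\Cc^2(\R^d)$ recovers exactly the pointwise convergence characterizing Condition \eqref{cond.M} via the equivalent criterion \eqref{eq.M1.prime} of Lemma \ref{lem.equiv.ass.M}: a convergent sequence is bounded, so $\sup_t|\int\frac{\varphi(y)-\varphi(0)}{t}\,\mu_t(\d y)|<\infty$ for each $\varphi\in\Ccinf$, giving \eqref{cond.M}. For Condition \eqref{cond.T}, I test against a cut-off function and invoke Lemma \ref{lem.equiv.ass.T}: choosing $\chi\in\Ccinf(\R^d)$ with $\chi(0)=1$, $0\le\chi\le1$, supported in $\{|y|\le M\}$, evaluating \eqref{eq.gen.levy.mixed} at $x=0$ gives $-\int\frac{\chi(y)-\chi(0)}{h_n}\,\mu_{h_n}(\d y)\to-L_{(b,\sigma,\nu)}\chi(0)$, and this limit can be made smaller than any prescribed $\ep$ by taking $M$ large, because $L_{(b,\sigma,\nu)}\chi(0)$ involves $\int(\chi(y)-1-\langle\nabla\chi(0),h(y)\rangle)\,\nu(\d y)$ whose negative part is controlled by the finite measure $\nu$ on $\{|y|>1\}$. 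This verifies \eqref{eq.M2.prime}, hence \eqref{cond.T} by Lemma \ref{lem.equiv.ass.T}, completing the equivalence.
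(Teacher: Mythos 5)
Your direction (i)$\Rightarrow$(ii) is, in substance, the paper's own argument: the same cut-off decomposition of $f\in \Cb^2(\R^d)$ into a compactly supported piece and a tail piece, the same use of the a priori estimate \eqref{eq.apriori.gamma} both for the uniform sup-norm bound and, applied to shifted differences $f(x_1+\cdot)-f(x_2+\cdot)$, for equicontinuity on compacts, and the same use of Condition \eqref{cond.T} to make the tail contribution small uniformly in $n$. One cosmetic point: the difference quotients are not \emph{equi-Lipschitz} in $x$ --- the estimate only yields a modulus of continuity governed by that of $\nabla^2 f$ plus an additive error $2\ep\|f\|_\infty$ coming from the tail term --- but uniform continuity of $\nabla^2(f\chi_R)$ is all the compactness argument needs, so this is harmless. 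Likewise, your derivation of \eqref{cond.M} from (ii) is fine once you note that ``a convergent sequence is bounded'' must be run as a contradiction over \emph{all} null sequences to reach the full supremum $\sup_{t>0}$ in \eqref{eq.M1.prime}; this is exactly the ``only if'' direction of Theorem \ref{prop.ex.levytriplet}, which you could cite directly.

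The genuine gap is in your derivation of Condition \eqref{cond.T} from (ii). The criterion \eqref{eq.M2.prime} of Lemma \ref{lem.equiv.ass.T} requires a \emph{single} cut-off $\chi_\ep$ (equivalently a single radius $M_\ep$) such that $\limsup_{h\downarrow 0}\big(-\int_{\R^d}\frac{\chi_\ep(y)-\chi_\ep(0)}{h}\,\mu_h(\d y)\big)<\ep$, where the $\limsup$ runs over the \emph{entire} family $h\downarrow 0$. What your test-function computation delivers is weaker: for each null sequence, (ii) produces \emph{some} subsequence $(h_n)$ and \emph{some} L\'evy triplet $(b,\si,\nu)$, and the radius $M$ you must take to make $-\big(L_{(b,\si,\nu)}\chi\big)(0)<\ep$ depends on $\nu$, hence on the chosen subsequence. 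Different null sequences (or different subsequences of the same one) may converge to different triplets requiring unboundedly large $M$, so the statement ``this verifies \eqref{eq.M2.prime}'' does not follow --- the quantifiers do not match. The missing idea is a diagonalization combined with a contradiction argument: if \eqref{cond.T} fails for some $\ep>0$, one can choose $t_n\in(0,\tfrac1n)$ with $\mu_{t_n}(\{|y|>n\})/t_n\ge\ep$, so that by monotonicity $\liminf_{n\to\infty}\mu_{t_n}(\{|y|>M\})/t_n\ge\ep$ for \emph{every} fixed $M>0$; applying (ii) to this one null sequence and running your test-function computation along the resulting subsequence then produces an $M$ with $\limsup_{n\to\infty}\mu_{h_n}(\{|y|>M\})/h_n<\ep$, contradicting the inherited $\liminf$. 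This is precisely how the paper closes the argument, and without it your proof of (ii)$\Rightarrow$\eqref{cond.T} is incomplete.
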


\begin{proof}
  We start with the implication (i)$\,\Rightarrow\,$(ii).\ To that end, let $f\in \Cb^2(\R^d)$ and $f_k:=f\chi_k$ for all $k\in \N$, where $(\chi_k)_{k\in \N}\subset \Ccinf(\R^d)$ with $\chi_k(y)=1$ for all $y\in \R^d$ with $|y|\leq k$ and $0\leq \chi_k\leq \chi_{k+1}\leq 1$ for all $k\in \N$.\ Then, by \eqref{eq.apriori.gamma}, for all $x\in \R^d$, $\ep>0$, and $k\in \N$ with $k\geq |x|+M_\ep$,
    \begin{align*}
        \sup_{h\in (0,h_0)}&\bigg|\int_{\R^d}\frac{f(x+y)-f(x)}{h}\mu_h(\d y)-\int_{\R^d}\frac{f_k(x)-f_k(x)}{h}\,\mu_h(\d y)\bigg|\\
        & \qquad\qquad\qquad\qquad\quad\quad  =  \sup_{h\in (0,h_0)}\bigg|\frac{\big(f(1-\chi_k)\big)(x+y)-\big(f(1-\chi_k)\big)(x)}{h}\bigg| \leq 2\|f\|_{\infty} \ep.
    \end{align*}
    Since, by the dominated convergence theorem, $\big(L_{(b,\si,\nu)}f_k\big)(x)\to \big(L_{(b,\si,\nu)}f\big)(x)$ as $k\to \infty$ for all $x\in \R^d$, we thus find that
    \[
   \lim_{n\to\infty}  \int_{\R^d}\frac{f(x+y)-f(x)}{h_n}\mu_{h_n}(\d y)=\big(L_{(b,\si,\nu)}\big)f(x)\quad \text{for all }x\in \R^d.
    \]
Now, by \eqref{eq.apriori.gamma}, for all $\ep>0$ and $x_1,x_2\in \R^d$,
	\begin{align}
    \notag \sup_{h\in(0,h_0)} &\bigg|\int_{\R^d}\frac{f(x_1+y)-f(x_1)}{h}\,\mu_h(\d y)-\int_{\R^d}\frac{f(x_2+y)-f(x_2)}{h}\,\mu_h(\d y)\bigg|\\
		&\quad \leq \sup_{h\in(0,h_0)} \bigg|\int_{\R^d}\frac{\big(f(x_1+y)-f(x_2+y)\big)-\big(f(x_1)-f(x_2)\big)}{h}\,\mu_h(\d y)\bigg| \notag\\
		&\quad \leq 2\ep \|f\|_\infty+C_{M_\ep}\max\bigg\{\big|\nabla f(x_1)-\nabla f(x_2)\big|,\sup_{|y|\leq M_\ep}\big|\nabla^2 f(x_1+y)-\nabla f(x_2+y)\big|\bigg\}, \label{eq.estimate.difference}
	\end{align}
	 so that, by compactness of the set $\{x\in \R^d\,|\, |x|\leq r\}$, for all $r\geq 0$,
	\[
	\sup_{|x|\leq r}\bigg|\int_{\R^d}\frac{f(x+y)-f(x)}{h_n}\,\mu_{h_n}(\d y)-\big(L_{(b,\si,\nu)}f\big)(x)\bigg|\to 0\quad\text{as }n\to \infty,
	\]
    since $L_{(b,\si,\nu)}f$ is continuous by the dominated convergence theorem. Moreover, by \eqref{eq.apriori.gamma}, 
    \[
    \sup_{h\in (0,h_0)}\bigg\| \int_{\R^d}\frac{f(\,\cdot+y)-f}{h}\,\mu_h(\d y)\bigg\|_\infty<\infty,
    \]
    so that $\int_{\R^d}\frac{f(\,\cdot\, +y)-f}{h_n}\,\mu_{h_n}(\d y) \to L_{(b,\sigma,\nu)}f$ in the mixed topology as $n\to\infty$.

    Now, assume that (ii) is satisfied.\ Then, by Theorem \ref{prop.ex.levytriplet}, Condition \eqref{cond.M} is satisfied.\ Assume towards a contradiction that Condition \eqref{cond.T} is not satisfied, i.e., there exists $\ep>0$ such that, for all $n\in \N$, there exists $t_n\in \big(0,\frac{1}n\big)$ with
    \[
     \frac{\mu_{t_n}\big(\big\{y\in \R^d\,\big|\, |y|>n \big\}\big)}{t_n}\geq \ep.
    \]
    Then, $(t_n)_{n\in \N}\subset (0,\infty)$ is a null sequence and, for all $M>0$ and $n\in \N$ with $n\geq M$,
    \[
     \frac{\mu_{t_n}\big(\big\{y\in \R^d\,\big|\, |y|>M \big\}\big)}{t_n}\geq \frac{\mu_{t_n}\big(\big\{y\in \R^d\,\big|\, |y|>n \big\}\big)}{t_n}\geq \ep,
    \]
so that
\begin{equation}\label{eq.contradiction.tight}
\liminf_{n\to \infty} \frac{\mu_{t_n}\big(\big\{y\in \R^d\,\big|\, |y|>M \big\}\big)}{t_n}\geq \ep\quad \text{for all }M>0.
\end{equation}
    By (ii), there exist a subsequence $(h_n)_{n\in \N}$ of $(t_n)_{n\in \N}$ and a L\'evy triplet $(b,\si,\nu)$ such that \eqref{eq.gen.levy.mixed} holds for all $f\in \Cb^2(\R^d)$. Since $\nu$ is a L\'evy measure, there exists a function $\chi\in \Ccinf(\R^d)$ with $\chi(x)=1$ for all $x\in \R^d$ with $|x|\leq 1$ and $0\leq \chi\leq 1$ such that
    \[
     \int_{\R^d\setminus \{0\}} 1-\chi(y)\,\nu(\d y)< \ep.
    \]
     Then, $\nabla \chi(0)=0$, $\nabla^2 \chi(0)=0$, and there exists a constant $M>0$ such that $\chi(x)=0$ for all $x\in \R^d$ with $|x|>M$. Hence,
    \begin{align*}
     \limsup_{n\to \infty} \frac{\mu_{h_n}\big(\big\{y\in \R^d\,\big|\, |y|>M \big\}\big)}{h_n}&\leq \lim_{n\to \infty} \int_{\R^d} \frac{1-\chi(y)}{h_n}\,\mu_{h_n}(\d y)=-\big(L_{(b,\si,\nu)}\chi\big)(0)\\
     &\leq \int_{\R^d\setminus \{0\}} 1-\chi(y)\,\nu(\d y)< \ep,
    \end{align*}
    which is a contradiction to \eqref{eq.contradiction.tight}.\ The proof is complete.        
\end{proof}

Assuming conditions \eqref{cond.M} and \eqref{cond.T}, the convergence in \eqref{eq.levygenerator} can be improved also in other directions as the following proposition shows.\ Here and throughout, we define $L_{(b,\sigma,\nu)}f$, for $f\in W_p^2(\R^d)$ and $p\in [1,\infty]$, via \eqref{eq.def.levygenerator} using weak derivatives instead of classical ones. Then, by Lemma \ref{lem.convolution} together with the identity $$\int_{\{y\leq 1\}}f(x+y)-f(x)-\nabla f(x)y\, \nu(\d y)=\frac12\int_0^1\int_{\{y\leq 1\}}y^T\nabla^2f(x+sy)y\, \nu(\d y)\,\d s,$$ it follows that $L_{(b,\sigma,\nu)}f\in L_p(\R^d)$ for $f\in W_p^2(\R^d)$ and $p\in [1,\infty]$.

\begin{proposition}\label{prop.levygenerator}
	Assume that the conditions \eqref{cond.M} and \eqref{cond.T} are satisfied.\ Let $(h_n)_{n\in \N}\subset (0,\infty)$ be a null sequence and $(b,\si,\nu)$ be a L\'evy triplet such that \eqref{eq.levygenerator} holds with $c=0$.
	\begin{enumerate}
    \item[a)] Let $f\in \BUC^2(\R^d)$.\ Then, $L_{(b,\sigma,\nu)}f\in \BUC(\R^d)$ and
		\begin{equation}\label{eq.conv.levy.unif}
		\lim_{n\to \infty}\bigg\|\int_{\R^d}\frac{f(\,\cdot+y)-f}{h_n}\,\mu_{h_n}(\d y)-L_{(b,\sigma,\nu)}f\bigg\|_\infty=0.
		\end{equation}
        If $f\in \Cnt_0^2(\R^d)$, then $L_{(b,\sigma,\nu)}f\in \Cnt_0(\R^d)$.
		\item[b)] Let  $p\in [1,\infty)$ and $f\in W_p^2(\R^d)$.\ Then, $L_{(b,\sigma,\nu)}f\in L_p(\R^d)$ and
		\begin{equation}\label{eq.conv.levy.Lp}
		\lim_{n\to \infty}\bigg\|\int_{\R^d}\frac{f(\,\cdot+y)-f}{h_n}\,\mu_{h_n}(\d y)-  L_{(b,\sigma,\nu)}f\bigg\|_p=0.
		\end{equation}
	\end{enumerate}	
\end{proposition}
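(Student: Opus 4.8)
The plan is to upgrade the pointwise convergence from \eqref{eq.levygenerator} (which holds on $\Cc^2$ with $c=0$) to the uniform and $L_p$ settings by combining the a priori estimates of Lemma \ref{lem.apriori.gamma} b) and Lemma \ref{lem.diff.quotient.Lp} with a density/approximation argument. The key structural fact I would exploit in both parts is that the difference-quotient operators $D_h f := \int_{\R^d} \frac{f(\,\cdot+y)-f}{h}\,\mu_h(\d y)$ are \emph{uniformly bounded} (in the appropriate norm) on the relevant second-order space, thanks to \eqref{eq.apriori.gamma} and \eqref{eq.apriori.Lp}, and that $\Cc^2$ (or $\Ccinf$) is dense in the target space in a way compatible with controlling both $f$ and its first two derivatives.

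\medskip\noindent\textbf{Part b).} This is the cleaner of the two. First I would record that $L_{(b,\sigma,\nu)}f\in L_p(\R^d)$ for $f\in W_p^2(\R^d)$, which was already noted in the paragraph preceding the proposition via Lemma \ref{lem.convolution} and the integral-remainder identity for the nonlocal part. For the convergence \eqref{eq.conv.levy.Lp}, the strategy is a standard $3\varepsilon$ argument: given $f\in W_p^2(\R^d)$ and $\varepsilon>0$, use the density of $\Ccinf(\R^d)$ in $W_p^2(\R^d)$ (cited as \cite[Corollary 3.23]{MR2424078}) to pick $g\in \Ccinf(\R^d)$ with $\|f-g\|_{W_p^2}$ small. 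Then estimate $\|D_{h_n}f - L_{(b,\sigma,\nu)}f\|_p$ by the triangle inequality into three terms: $\|D_{h_n}(f-g)\|_p$, $\|D_{h_n}g - L_{(b,\sigma,\nu)}g\|_p$, and $\|L_{(b,\sigma,\nu)}(g-f)\|_p$. The first is controlled uniformly in $n$ by \eqref{eq.apriori.Lp} applied to $f-g$ (here $D_h$ is linear, and the bound involves $\|f-g\|_p$ and $\max\{\|\nabla(f-g)\|_p,\|\nabla^2(f-g)\|_p\}$, all small); the third is controlled by the continuity of $L_{(b,\sigma,\nu)}$ on $W_p^2(\R^d)$ into $L_p(\R^d)$; and the middle term tends to $0$ as $n\to\infty$ because for the \emph{fixed} compactly supported smooth $g$ one upgrades the pointwise convergence \eqref{eq.levygenerator} to $L_p$-convergence. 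The one nontrivial point I would flag here is establishing that middle $L_p$-convergence for $g\in\Ccinf$: pointwise convergence plus a dominated-convergence argument, using the uniform pointwise bound from \eqref{eq.apriori.gamma} (which for compactly supported $g$ gives an integrable dominating function via the decay of $g$ and $\nabla^2 g$ outside a compact set, together with the control $c_M\sup_{|y|>M}|g(x+y)-g(x)|$ that vanishes for $|x|$ large), would yield it.

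\medskip\noindent\textbf{Part a).} For $f\in\BUC^2(\R^d)$ I would first verify $L_{(b,\sigma,\nu)}f\in\BUC(\R^d)$: the local terms $\langle b,\nabla f\rangle$ and $\tfrac12\trace(\sigma\nabla^2 f)$ are uniformly continuous since $\nabla f,\nabla^2 f\in\BUC$, and for the nonlocal part one splits the $\nu$-integral at $|y|=M$, bounding the small-jump part by $\int_{\{|y|\le 1\}}(\int_0^1|\nabla^2 f(x+sy)-\nabla^2 f(x'+sy)|\,\d s)|y|^2\,\nu(\d y)$ and the large-jump part using $\int 1\wedge|y|^2\,\nu(\d y)<\infty$; uniform continuity of $\nabla f,\nabla^2 f$ then transfers uniform continuity to the integral. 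For the convergence \eqref{eq.conv.levy.unif}, the crucial estimate is \eqref{eq.estimate.difference} combined with \eqref{eq.apriori.gamma}: it shows that the family $x\mapsto (D_h f)(x)$ has a \emph{uniform} modulus of continuity controlled by the moduli of continuity of $\nabla f$ and $\nabla^2 f$ and by the tail $\varepsilon$, uniformly in $h\in(0,h_0)$. Together with the pointwise convergence on a dense-enough set and the tightness term $c_{M_\varepsilon}\sup_{|y|>M_\varepsilon}|f(\,\cdot+y)-f|$ being uniformly small (uniform continuity of $f$ bounds this, but more importantly the $2\varepsilon\|f\|_\infty$ tail control in \eqref{eq.estimate.difference} handles the far field), an Arzel\`a--Ascoli-type / equicontinuity argument promotes locally uniform convergence to genuinely uniform convergence; the $\BUC$ regularity of $f$ is precisely what prevents mass from escaping to infinity in the sup-norm.

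\medskip\noindent The step I expect to be the main obstacle is the uniform (sup-norm) convergence in part a): unlike the $L_p$ case, there is no density argument in $\|\cdot\|_\infty$ that keeps second derivatives and uniform continuity simultaneously controlled, so one cannot simply reduce to compactly supported $g$. The argument must instead run directly on $f\in\BUC^2$, leveraging the \emph{uniform-in-$h$ equicontinuity} encoded in \eqref{eq.estimate.difference} together with Condition \eqref{cond.T} (through the tail constant $c_{M_\varepsilon}$) to make the far-field contribution uniformly negligible; this is exactly where Condition \eqref{cond.T} becomes indispensable, as the preceding Remark \ref{rem.neccesity_T_2} already warns. Making the passage from ``locally uniform plus uniform equicontinuity plus uniformly small tails'' to ``globally uniform'' fully rigorous is the delicate bookkeeping I would be most careful with.
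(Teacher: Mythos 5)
Your overall $3\varepsilon$ architecture for part b) matches the paper's in spirit, but the argument for part a) has a genuine gap, and it sits exactly at the step you yourself flag as delicate. Writing $D_hf:=\int_{\R^d}\frac{f(\,\cdot+y)-f}{h}\,\mu_h(\d y)$, the estimate \eqref{eq.estimate.difference} does give the family $(D_hf)_{h\in(0,h_0)}$ a uniform (up to an additive $2\ep\|f\|_\infty$) modulus of continuity, and together with pointwise convergence this yields \emph{locally} uniform convergence --- but that is already the content of Proposition \ref{prop.conv.levy.mixed} and is strictly weaker than \eqref{eq.conv.levy.unif}. On the unbounded domain $\R^d$, ``uniformly bounded $+$ uniformly equicontinuous $+$ locally uniformly convergent to $0$'' does \emph{not} imply $\|\cdot\|_\infty$-convergence to $0$: the translates $g_n=\phi(\,\cdot-n e_1)$ of a fixed bump satisfy all three properties. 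The $\BUC$ regularity of $f$ does not repair this, and no Arzel\`a--Ascoli argument can. The ingredient you are missing is the translation invariance of $D_h$ and $L_{(b,\si,\nu)}$, which the paper exploits through the convolution identity \eqref{eq.identity.convolution}: for a Friedrichs mollifier $\eta\in\Ccinf(\R^d)$ one has $D_h(f\ast\eta)-L_{(b,\si,\nu)}(f\ast\eta)=f\ast\big(D_h\eta-L_{(b,\si,\nu)}\eta\big)$, hence by Young's inequality
\[
\big\|D_{h_n}(f\ast\eta)-L_{(b,\si,\nu)}(f\ast\eta)\big\|_\infty\leq \|f\|_\infty\,\big\|D_{h_n}\eta-L_{(b,\si,\nu)}\eta\big\|_1\to 0,
\]
where the $L_1$-convergence for the fixed $\eta\in\Cc^2(\R^d)$ is the compactly supported case. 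Since $f\ast\eta_\de\to f$ together with first and second derivatives, \emph{uniformly}, precisely because $f\in\BUC^2(\R^d)$, the a priori bound \eqref{eq.apriori.gamma} closes the $3\varepsilon$ argument globally in $x$. This convolution step is the actual proof of part a); it cannot be replaced by equicontinuity.

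Two smaller points. First, in part b) your justification of $\|D_{h_n}g-L_{(b,\si,\nu)}g\|_p\to 0$ for $g\in\Ccinf(\R^d)$ via dominated convergence does not work as stated: for $|x|>R+M$ (with $\supp g\subset\{|x|\le R\}$) one has $\sup_{|y|>M}|g(x+y)-g(x)|=\|g\|_\infty$, so your proposed dominating function is a nonzero constant on an unbounded set and is not $p$-integrable. The correct argument is the one in the paper: split off the tail $\int_{\{|y|>M_\ep\}}\frac{g(\,\cdot+y)}{h}\,\mu_h(\d y)$ on $\{|x|>r_\ep\}$, bound its $L_p$-norm by $\|g\|_p\,\mu_h(\{|y|>M_\ep\})/h$ via Minkowski's integral inequality (Lemma \ref{lem.convolution}), and invoke Condition \eqref{cond.T} to make this $\le\ep\|g\|_p$ for small $h$, combining with locally uniform convergence on the bounded remainder. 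Second, your reduction of part b) to $g\in\Ccinf$ dense in $W_p^2(\R^d)$ is a legitimate alternative to the paper's mollification route for that part (the paper uses $f\ast\eta_\de$ there too, mainly so that parts a) and b) run in parallel); but for part a) no such density is available, since $\Ccinf(\R^d)$ is not dense in $\BUC^2(\R^d)$ --- which is exactly why the convolution identity, rather than density, must carry the argument.
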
	

\begin{proof} 
We first show \eqref{eq.conv.levy.unif} and \eqref{eq.conv.levy.Lp} for $f\in \Cc^2(\R^d)$. To that end, let $f\in \Cc^2(\R^d)$.\ Then, by dominated convergence, $L_{(b,\si,\nu)}f\in \Cnt_0(\R^d)$.\ Moreover, for all $\ep>0$, there exists some $r_\ep\geq0$ such that $f(x+y)=0$ for all $x,y\in \R^d$ with $|x|>r_\ep$ and $|y|\leq M_\ep$. Hence, for all $\ep>0$,
	\begin{align*}
	\sup_{|x|>r_\ep} \bigg|\int_{\R^d}\frac{f(x+y)-f(x)}{h}\,\mu_h(\d y )\bigg|&= \sup_{|x|>r_\ep} \bigg|\int_{\{|y|>M_\ep\}}\frac{f(x+y)}{h}\,\mu_h(\d y )\bigg|\\
	&\leq \|f\|_\infty\frac{\mu_h\big(\big\{ y\in \R^d\,\big|\, |y|>M_\ep\big\}\big)}h\leq\ep
	\end{align*}
    for $h\in (0,h_0)$ sufficiently small.\
 Therefore, by Proposition \ref{prop.conv.levy.mixed}, it follows that \ref{eq.conv.levy.unif} holds for $f\in \Cc^2(\R^d)$.
 
 Now, let $p\in [1,\infty)$ and $f\in \Cc^2(\R^d)$.\ Then, by Theorem \ref{prop.ex.levytriplet}, Lemma \ref{lem.diff.quotient.Lp}, and Fatou's lemma, $L_{(b,\si,\nu)}f\in L_p(\R^d)$.\ Moreover, by Lemma \ref{lem.convolution},
\begin{align*}
	 \bigg(\int_{\{|x|>r_\ep\}} \bigg|\int_{\R^d}\frac{f(x+y)-f(x)}{h}\,&\mu_h(\d y )\bigg|^p\,\d x\bigg)^{1/p}\\
     &=  \bigg(\int_{\{|x|>r_\ep\}} \bigg|\int_{\{|y|>M_\ep\}}\frac{f(x+y)}{h}\,\mu_h(\d y )\bigg|^p\,\d x\bigg)^{1/p}\\
     &\leq  \bigg(\int_{\R^d} \bigg|\int_{\{|y|>M_\ep\}}\frac{f(x+y)}{h}\,\mu_h(\d y )\bigg|^p\,\d x\bigg)^{1/p}\\
	 &\leq \|f\|_p\frac{\mu_h\big(\big\{ y\in \R^d\,\big|\, |y|>M_\ep\big\}\big)}h\leq\ep.
	\end{align*}
    Hence, by Proposition \ref{prop.conv.levy.mixed}, \eqref{eq.conv.levy.Lp} follows for all $p\in [1,\infty)$ and $f\in \Cc^2(\R^d)$, i.e., we have shown that
 \begin{equation}\label{eq.conv.Cc2}    
 \lim_{n\to \infty}\bigg\|\int_{\R^d}\frac{f(\,\cdot+y)-f}{h_n}\,\mu_{h_n}(\d y)-  L_{(b,\sigma,\nu)}f\bigg\|_p=0\quad \text{for all }f\in \Cc^2(\R^d)\text{ and }p\in [1,\infty].
 \end{equation}
 Next, observe that, by Fubini's theorem,
	\begin{equation}\label{eq.identity.convolution}
	 \int_{\R^d}\frac{(f\ast \eta)(\,\cdot+y)-(f\ast \eta)}h\,\mu_h(\d y)=f\ast \bigg(\int_{\R^d}\frac{\eta(\,\cdot+ y)- \eta}{h}\,\mu_h(\d y)\bigg) 
	\end{equation}
    for all $h\in (0,h_0)$, $f\in L_p(\R^d)$ with $p\in [1,\infty]$ and $\eta\in \Cc^2(\R^d)$.\
 Combining \eqref{eq.conv.Cc2} and \eqref{eq.identity.convolution} with Lemma \ref{lem.convolution}, it follows that
\begin{equation}\label{eq.generator}
 L_{(b,\sigma,\nu)}(f\ast\eta)= f\ast \big(L_{(b,\sigma,\nu)}\eta\big) \quad\text{for all }f\in L_p(\R^d)\text{ with }p\in [1,\infty]\text{ and }\eta\in \Cc^2(\R^d).
\end{equation}
Since $L_{(b,\sigma,\nu)}f\in L_p(\R^d)$ for $f\in W_p^2(\R^d)$ and $p\in [1,\infty]$,
using the definition of the weak derivative, it follows that
\begin{equation}\label{eq.identity.convolution.main}
L_{(b,\sigma,\nu)}(f\ast\eta)= f\ast \big(L_{(b,\sigma,\nu)}\eta\big)=\big(L_{(b,\sigma,\nu)}f\big)\ast\eta\quad \text{for all }f\in W_p^2(\R^d) \text{ with }p\in [1,\infty].
\end{equation}
Now, using Friedrich's mollifier, \eqref{eq.conv.levy.unif} and \eqref{eq.conv.levy.Lp} for $f\in \BUC^2(\R^d)$ and $f\in W_p^2(\R^d)$ with $p\in [1,\infty)$ follow from  Lemma \ref{lem.apriori.gamma} and Lemma \ref{lem.diff.quotient.Lp} together with the fact that $L_{(b,\sigma,\nu)}f$ is an element of $\BUC(\R^d)$ and $L_p(\R^d)$ for $f\in \BUC^2(\R^d)$ and $f\in W_p^2(\R^d)$ with $p\in [1,\infty)$, respectively. Since $\Cnt_0(\R^d)$ is a Banach space, it follows that $L_{(b,\sigma,\nu)}f\in \Cnt_0(\R^d)$ for $f\in \Cnt_0^2(\R^d)$ and the proof is complete.
\end{proof}

\begin{remark}\label{rem.necessity_T_3}
We point out that Condition \eqref{cond.M} alone is again not enough to ensure the convergence results in the previous proposition, see Remark \ref{rem.neccesity_T_2}.\ Indeed, despite the fact that, under Condition \eqref{cond.M}, the difference quotients converge pointwise for all $f\in \Cc^2(\R^d)$, and are uniformly bounded w.r.t.\ the supremum norm or $L_p$-norm, it is possible that they do not converge w.r.t.\ the supremum norm or $L_p$-norm for \textit{any} nonzero element of $\Cnt_0(\R^d)$ or $L_p(\R^d)$ for $p\in [1,\infty)$, respectively.\ 
To that end, we revisit the example given in Remark \ref{rem.neccesity_T_1} yet another time.\ Let $d=1$, 
\[
 \mu_t:=\frac{1-t}{2}\big(\de_t+\de_{-t}\big)+t\de_{t^{-1}}\quad \text{for }t\in (0,1],
\]
$\mu_t:=\mu_1$ for $t>1$, and $h_0:=1$.

First, let $f\in\Cnt_0(\R)\setminus \{0\}$.\ Then, there exists some $r\geq\frac12$ such that $\sup_{|x|>r}|f(x)|\leq \frac{\|f\|_\infty}{2}$.\ Now, if $h\in \big(0,\frac1{2r}\big)$, the inverse triangle inequality implies that 
\begin{equation}\label{eq.ex.inverse.triangle}
\bigg|z-\frac1h\bigg|\geq \frac1h-|z|> r \quad \text{for all }z\in \R\text{ with }|z|\leq r.
\end{equation}
Hence, for all $h\in \big(0,\frac1{2r}\big)$, 
\[
\sup_{|x|>r} \bigg|\int_{\R}\frac{f(x+y)-f(x)}{h}\,\mu_h(\d y )\bigg|= \sup_{|x|>r} \bigg|f\bigg(x+\frac1h\bigg)-f(x)\bigg|\geq\sup_{|z|\leq r} |f(z)|-\frac{\|f\|_\infty}{2}=\frac{\|f\|_\infty}{2}.
\]
Since $\int_{\R^d}\frac{f(\,\cdot\, +y)-f(x)}{t}\,\mu_t(\d y )\in \Cnt_0(\R)$ for all $t>0$ and $\Cnt_0(\R)$, endowed with the supremum norm, is a Banach space, the sequence $\big(\int_{\R}\frac{f(\,\cdot\,+y)-f(x)}{h_n}\,\mu_{h_n}(\d y )\big)_{n\in \N}$ does not converge w.r.t.\ the supremum norm for any sequence $(h_n)_{n\in\N}\subset (0,1)$ with $h_n\to0$ as $n\to\infty$.

Now, let $p\in [1,\infty)$ and $f\in L_p(\R)\setminus\{0\}$.\ Then, there exists some $r\geq \frac12$ such that $\big(\int_{\{|x|> r\}} |f(x)|^p\,\d x\big)^{1/p}\leq \frac{\|f\|_p}{4}$, which implies that
\[
\|f\|_p\leq \bigg(\int_{\{|x|\leq  r\}} |f(x)|^p\,\d x\bigg)^{1/p}+\frac{\|f\|_p}{4}
\]
and, consequently, $\big(\int_{\{|x|\leq  r\}} |f(x)|^p\,\d x\big)^{1/p}\geq \frac{3\|f\|_p}4.$
Using again \eqref{eq.ex.inverse.triangle} together with the substitution $z=x+\frac1h$, we thus obtain,
for all $h\in \big(0,\frac1{2r}\big)$,
\begin{align*}
 \bigg(\int_{\{|x|>r\}} \bigg|\int_{\R^d}\frac{f(x+y)-f(x)}{h}\,\mu_h(\d y )\bigg|^p\,\d x\bigg)^{1/p}&= \bigg(\int_{\{|x|>r\}} \bigg|f\bigg(x+\frac1h\bigg)-f(x)\bigg|^p\,\d x\bigg)^{1/p}\\
 &\geq \bigg(\int_{\{|z|\leq r\}} |f(z)|\,\d z\bigg)^{1/p}-\frac{\|f\|_p}4\geq \frac{\|f\|_p}2.
\end{align*}
Hence, by \cite[Corollary 4.5.5]{MR2267655}, the sequence $\big(\int_{\R^d}\frac{f(\,\cdot\,+y)-f(x)}{h_n}\,\mu_{h_n}(\d y )\big)_{n\in \N}$ does not converge in $L_p(\R)$ for any sequence $(h_n)_{n\in\N}\subset (0,1)$ with $h_n\to0$ as $n\to\infty$.
\end{remark}

We now proceed with a characterization of the stronger condition \eqref{cond.Mstar} together with \eqref{cond.T}.

\begin{proposition}\label{prop.levy.uniqueness}
    The following statements are equivalent.
\begin{enumerate}
    \item[(i)] The conditions \eqref{cond.Mstar} and \eqref{cond.T} are satisfied.
    \item[(ii)] There exists a unique L\'evy triplet $(b,\si,\nu)$ such that, for all null sequences $(h_n)_{n\in \N}\subset (0,\infty)$ and all $f\in \Cb^2(\R^d)$,
		\[
		\lim_{n\to \infty} \int_{\R^d}\frac{f(\,\cdot+y)-f}{h_n}\,\mu_{h_n}(\d y) =L_{(b,\sigma,\nu)}f\quad \text{in the mixed topology.}
		\]
        \end{enumerate}
\end{proposition}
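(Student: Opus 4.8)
The plan is to establish the two implications separately, using Proposition~\ref{prop.conv.levy.mixed} and Theorem~\ref{prop.ex.levytriplet} for the subsequential theory, and invoking Condition~\eqref{cond.Mstar} only to pin down and render unique the limiting triplet. For the implication (i)$\,\Rightarrow\,$(ii), I would first note that \eqref{cond.Mstar} implies \eqref{cond.M}: the existence of the limit in \eqref{eq.M1star} bounds the difference quotients $\int\frac{\ph(y)-\ph(0)}h\,\mu_h(\d y)$ for small $h$, while for $t\geq h_0$ they are bounded by $\frac2{h_0}\|\ph\|_\infty$, so \eqref{eq.M1.prime} holds and Lemma~\ref{lem.equiv.ass.M} yields \eqref{cond.M}. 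Thus the hypotheses of Proposition~\ref{prop.conv.levy.mixed} are met, and every null sequence admits a subsequence $(h_n)_{n\in\N}$ and a L\'evy triplet $(b,\si,\nu)$---with vanishing killing by Theorem~\ref{prop.ex.levytriplet} and \eqref{cond.T}---such that $\int\frac{f(\,\cdot+y)-f}{h_n}\,\mu_{h_n}(\d y)\to L_{(b,\si,\nu)}f$ in the mixed topology for all $f\in\Cb^2(\R^d)$.

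The crux is to show that all such subsequential triplets coincide. Since mixed convergence entails pointwise convergence by \eqref{eq.conv.mixed}, evaluating at $x=0$ shows that any subsequential triplet satisfies $L_{(b,\si,\nu)}\ph(0)=\lim_{n\to\infty}\int\frac{\ph(y)-\ph(0)}{h_n}\,\mu_{h_n}(\d y)$ for every $\ph\in\Ccinf(\R^d)$; by \eqref{cond.Mstar} this limit equals the full limit $\lim_{h\downarrow0}\int\frac{\ph(y)-\ph(0)}h\,\mu_h(\d y)$ and is therefore independent of the chosen subsequence. Hence $\ph\mapsto L_{(b,\si,\nu)}\ph(0)$ is one and the same functional on $\Ccinf(\R^d)$ for every subsequential triplet, and a L\'evy triplet is uniquely determined by this functional: testing against $\ph\in\Ccinf(\R^d)$ supported away from the origin recovers $\nu$ on each annulus $\{|y|\geq\delta\}$, hence on $\R^d\setminus\{0\}$, after which the first- and second-order jets at $0$ determine $b$ and $\si$ (this is the uniqueness embedded in the proof of Courr\`ege's theorem). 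Consequently the triplet $(b,\si,\nu)$ is unique.

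It then remains to upgrade subsequential convergence to convergence of the whole family. Fix an arbitrary null sequence $(h_n)_{n\in\N}$ and $f\in\Cb^2(\R^d)$, and write $a_n:=\int\frac{f(\,\cdot+y)-f}{h_n}\,\mu_{h_n}(\d y)$. Uniform boundedness $\sup_{n\in\N}\|a_n\|_\infty<\infty$ follows directly from the a priori estimate \eqref{eq.apriori.gamma}. For uniform convergence on compacts I would argue by contradiction: if $\sup_{|x|\leq r}|a_n(x)-L_{(b,\si,\nu)}f(x)|\geq\ep$ along some subsequence and some $r\geq0$, then Proposition~\ref{prop.conv.levy.mixed} produces a further subsequence converging in the mixed topology to $L_{(b',\si',\nu')}f$; by the uniqueness just established $(b',\si',\nu')=(b,\si,\nu)$, and mixed convergence forces uniform convergence on $\{|x|\leq r\}$, contradicting the lower bound. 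Together with \eqref{eq.conv.mixed} this yields $a_n\to L_{(b,\si,\nu)}f$ in the mixed topology, proving (ii).

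For the converse (ii)$\,\Rightarrow\,$(i), observe that (ii) is in particular the (stronger, whole-sequence) version of Proposition~\ref{prop.conv.levy.mixed}(ii), so that proposition immediately gives \eqref{cond.M} and \eqref{cond.T}. For \eqref{cond.Mstar}, fix $\ph\in\Ccinf(\R^d)\subset\Cb^2(\R^d)$; evaluating the mixed convergence in (ii) at $x=0$ gives $\int\frac{\ph(y)-\ph(0)}{h_n}\,\mu_{h_n}(\d y)\to L_{(b,\si,\nu)}\ph(0)$ for every null sequence $(h_n)_{n\in\N}$, with a limit independent of the sequence since the triplet is fixed; hence $\lim_{h\downarrow0}\int\frac{\ph(y)-\ph(0)}h\,\mu_h(\d y)$ exists, which is \eqref{cond.Mstar}. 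The main obstacle I anticipate is the uniqueness of the L\'evy triplet from the limit functional on $\Ccinf(\R^d)$, carrying the Courr\`ege-type recovery of $(b,\si,\nu)$; the subsequent passage from subsequential to full mixed convergence is then a routine contradiction argument resting on the a priori estimate \eqref{eq.apriori.gamma} and the characterization \eqref{eq.conv.mixed}.
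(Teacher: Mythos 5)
Your proposal is correct and follows essentially the same route as the paper: (ii)$\,\Rightarrow\,$(i) directly from Proposition \ref{prop.conv.levy.mixed}, and (i)$\,\Rightarrow\,$(ii) by combining Proposition \ref{prop.conv.levy.mixed} with the observation that Condition \eqref{cond.Mstar} forces the functional $\ph\mapsto \big(L_{(b,\si,\nu)}\ph\big)(0)$ on $\Ccinf(\R^d)$ to be subsequence-independent, together with the Courr\`ege-type fact that this functional determines the L\'evy triplet (recovering $\nu$ from test functions supported away from the origin, then $b$ and $\si$ from the jets at $0$) and a standard subsequence-of-subsequences upgrade. The only difference is cosmetic: the paper spells out the triplet-uniqueness step with explicit test functions and a Cauchy--Schwarz argument, whereas you defer to the uniqueness embedded in Courr\`ege's theorem, which is the same content.
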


\begin{proof}
 The implication (ii)$\,\Rightarrow\,$(i) is a direct consequence of Proposition \ref{prop.conv.levy.mixed}.\ In view of Condition \eqref{cond.Mstar} and Proposition \ref{prop.conv.levy.mixed}, the other implication follows once we have shown that, for two L\'evy triplets $(b_1,\si_1,\nu_1)$ and $(b_2,\si_2,\nu_2)$, \begin{equation}\label{eq.levyequality}\big(L_{(b_1,\si_1,\nu_1)}f\big)(0)=\big(L_{(b_2,\si_2,\nu_2)}f\big)(0)\quad \text{for all }f\in \Ccinf(\R^d)\end{equation}
 implies that $(b_1,\si_1,\nu_1)=(b_2,\si_2,\nu_2)$.\ To that end, let $(b_1,\si_1,\nu_1)$ and $(b_2,\si_2,\nu_2)$ be two L\'evy triplets with \eqref{eq.levyequality}.\
 In a first step, we consider a compact set $K\subset \R^d\setminus\{0\}$.\ Since $K$ is compact with $0\notin K$, it follows that $\dist(K,0)>0$, so that, using Friedrichs mollifier, there exist $\de>0$ and a sequence $(\chi_n)_{n\in \N}\subset \Ccinf(\R^d)$ with $\lim_{n\to \infty}\chi_n(x)= \eins_K(x)$ for all $x\in \R^d$, $0\leq \chi_n\leq 1$ for all $n\in \N$, and $\chi_n(x)=0$ for all $n\in \N$ and $x\in \R^d$ with $|x|\leq \de$.\ Then,
\begin{align*}
 \nu_1(K)&=\lim_{n\to \infty}\int_{\R^d\setminus\{0\}}\chi_n(y)\,\nu_1(\d y)=\lim_{n\to \infty}\big(L_{(b_1,\si_1,\nu_1)}\chi_n\big)(0)=\lim_{n\to \infty}\big(L_{(b_2,\si_2,\nu_2)}\chi_n\big)(0)\\
 &=\lim_{n\to \infty}\int_{\R^d\setminus\{0\}}\chi_n(y)\,\nu_2(\d y)=\nu_2(K).
\end{align*}
Since the system of all compacts subsets of $\R^d\setminus\{0\}$ is an intersection-stable generator of the Borel $\si$-algebra on $\R^d\setminus\{0\}$ and both $\nu_1$ and $\nu_2$ are $\si$-finite (as they assign finite measure to compact subsets of $\R^d\setminus\{0\}$), it follows that $\nu_1=\nu_2$.\ Now, one simply considers a function $\chi\in \Ccinf(\R^d)$ with $\nabla \chi(0)=b_1$ and $\nabla^2 \chi(0)=0$ to obtain
\[
|b_1|=\langle b_2,b_1\rangle,
\]
which, by the Cauchy-Schwarz inequality implies that $b_1=b_2$. Last but not least, one chooses a function $\chi\in \Ccinf(\R^d)$ with $\nabla^2 \chi(0)=\si_1$, so that
\[
 \trace\big(\si_1^\top \si_1\big)=\trace\big(\si_2^\top \si_1\big),
\]
which, again by the Cauchy-Schwarz inequality, implies that $\si_1=\si_2$.\ The proof is complete.
\end{proof}

We conclude this section with three examples of families $(\mu_t)_{t>0}$ that satisfy the stronger condition \eqref{cond.Mstar} and \eqref{cond.T}.

\begin{example}[L\'evy processes]\label{ex.levy}
	Let $(Y_t)_{t>0}$ be a $d$-dimensional L\'evy process and $\mu_t$ the distribution of $Y_t$ for $t>0$.\ Then, Condition \eqref{cond.Mstar} and Condition \eqref{cond.T} follow from the well-known L\'evy-Khinchine representation, cf. \cite[Theorem 3.3.3(3), p.\ 164]{applebaum2009levy} or \cite[Theorem 31.5, p.\ 208]{sato1999levy} together with Lemma \ref{lem.apriori.gamma} b) and Lemma \ref{lem.smoothcutoff}. 
\end{example}

\begin{example}[Stochastic convolution]\label{ex.OU}
	Let $A\in \R^{d\times d}$, $b\in \R^d$, $\sigma\in \R^{d\times d}$ be a symmetric and positive semidefinite matrix, and $\mu_t$ be the law of the stochastic convolution
	\[
	Y_t:=\int_0^t e^{(t-s)A}\,(b\,\d t+\sigma\, \d W_t)\quad \text{for }t>0,
	\]
	where $(W_t)_{t\geq 0}$ is a $d$-dimensional standard Brownian motion on some probability space $(\Omega,\mathcal F,\mathbb P)$.\ Then, Condition \eqref{cond.Mstar} and Condition \eqref{cond.T} are satisfied.\ If $\psi_t(x):=e^{tA}x$ for $t> 0$ and $x\in \R^d$, then $p_t(x,B):=\mu_t\big(\big\{y\in \R^d\,|\, \psi_t(x)+y\in B\big\}\big)$, for $t>0$, $x\in \R^d$, and $B\in \Bc(\R^d)$, is the transition kernel of the Ornstein-Uhlenbeck process given by the SDE
	\[
	\d X_t^x=(AX_t^x+b)\,\d t+\sigma\, \d W_t,\quad X_0^x=x\in \R^d.
	\]
\end{example}

\begin{example}[Central Limit Theorem]\label{ex.clt}
 Let $(\Om,\mathcal F,\P)$ be a probability space and $(X_n)_{n\in \N}$ be an i.i.d.\ sequence of random vectors with $\E[X_1]=0$ and $\E[|X_1|^2]<\infty$.\ Moreover, let $\si:=\E[X_1X_1^\top]\in \R^{d\times d}$, and define 
 \[
 \mu_t:=\P\circ \big(\sqrt{t}X_1\big)^{-1}\quad \text{for all }t>0.
 \]
 Then, for all $\chi\in \Ccinf(\R^d)$,
    \begin{align*}
    \int_{\R^d} \frac{\chi(y)-\chi(0)}h\,\mu_h(\d y)&=\E\bigg[\frac{\chi\big(\sqrt h X_1\big)-\chi(0)}h\bigg]=\frac 12\int_0^h\E\big[X_1^\top \nabla^2\chi\big(s\sqrt{h}X_1\big) X_1\big]\,\d s\\
    &\quad\to \trace\big(\sigma \nabla^2\chi(0)\big)\quad \text{as }h\downarrow0,
    \end{align*}
    and, using Markov's inequality,
    \[
    \sup_{t>0}\frac{\P\big(\sqrt{t}X_1>M\big)}{t}\leq \frac{\E\big[|X_1|^2\big]}{M^2}\to 0\quad \text{as }M\to \infty,
    \]
    so that conditions \eqref{cond.Mstar} and \eqref{cond.T} are satisfied.
    Observe that, for all $h>0$ and $k\in \N$,
    \[
     \int_{\R^d} f(y)\, \mu_h^{\ast k}(\d y)=\E\Bigg[f\bigg(\sqrt h\sum_{i=1}^kX_i\bigg)\Bigg]
    \]
    and, if $h=\frac1k$,
    \[
      \int_{\R^d} f(y)\, \mu_h^{\ast k}(\d y)=\E\Bigg[f\bigg(\frac1{\sqrt k}\sum_{i=1}^kX_i\bigg)\Bigg].
    \]
\end{example}

\section{Discussion of Condition (D)} \label{sec.D}

We start this section with several examples of the family $(\psi_t)_{t>0}$ that all satisfy the stronger condition \eqref{cond.Dstar}. 

\begin{example}[Semiflows]\label{ex.diffusion}
	Let $F\colon \R^d\to \R^d$ be globally Lipschitz continuous with Lipschitz constant $L\geq 0$.
	Then, by the Picard-Lindel\"of Theorem, for all $x\in \R^d$, the initial value problem
	\begin{align}\label{eq.ode}
		u'(t)&=F\big(u(t)\big)\quad \text{for all }t\geq 0,\\
		\notag u(0)&=x,
	\end{align}
	has a unique global solution $u\colon [0,\infty)\to \R^d$, which is continuously differentiable, and we define $\psi_t(x):=u(t)$ for all $t>0$.\ Using the Lipschitz continuity of $F$, for all $x,u\in \R^d$ and $t>0$,
	\begin{align*}
		|\psi_t(x+u)-\psi_t(x)-u|&\leq \int_0^t \big|F\big(\psi_s(x+u)\big)-F\big(\psi_s(x)\big)\big|\, \d s\\
		&\leq L|u|t+ \int_0^t L|\psi_s(x+u)-\psi_s(x)-u|\, \d s.
	\end{align*}
	Using Gronwall's lemma, it follows that
	\[
	\sup_{x\in \R^d}|\psi_t(x+u)-\psi_t(x)-u|\leq Lte^{Lt}|u|
	\]
	for all $u\in \R^d$ and $t>0$.\ Therefore, \eqref{eq.biton} is satisfied for any $h_0>0$ with $\omega=Le^{Lh_0}$.\ Moreover, by definition, 
    \[
    \lim_{h\downarrow 0}\frac{\psi_h(x)-x}{h}=F(x) \quad\text{for all }x\in \R^d,
    \]
    so that Condition \eqref{cond.Dstar} is satisfied.
\end{example}

Alternatively, one can consider a family $(\psi_t)_{t> 0}$, which is given in terms of an Euler scheme, as the following example shows.

\begin{example}[Euler method]\label{ex.euler}
	Let $F\colon \R^d\to \R^d$ be globally Lipschitz with Lipschitz constant $L\geq 0$. Consider the explicit Euler method
	\[
	\psi_t(x):=x+tF(x)\quad\text{for all }t>0 \text{ and }x\in \R^d.
	\]
	Then, for all $u\in \R^d$ and $t>0$,
	\[
	\sup_{x\in \R^d}\frac{|\psi_t(x+u)-\psi_t(x)-u|}{t}=\sup_{x\in \R^d}|F(x+u)-F(x)|\leq L |u|,
	\]
	which shows that \eqref{eq.biton} is satisfied for any $h_0>0$ with $\omega=L$.\ Moreover,
    \[
    \frac{\psi_t(x)-x}{t}=F(x)\quad \text{for all }t>0\text{ and }x\in \R^d,
    \]
    so that again the stronger condition \eqref{cond.Dstar} is satisfied.
\end{example}

More generally, one can also consider arbitrary $s$-step Runge-Kutta methods as the following example shows.

\begin{example}[Runge-Kutta methods]\label{ex.runge}
	Let $F\colon \R^d\to \R^d$ be globally Lipschitz with Lipschitz constant $L\geq 0$. For $s\in \N$, we consider the $s$-step Runge-Kutta method, 
	given by the implicit equations
	\begin{equation}\label{eq.rungekutta}
		k_t^i(x)=F\bigg(x+h \sum_{j=1}^s a_{ij} k_t^j(x)\bigg)\quad\text{for all }x\in \R^d\text{ and }i=1,\ldots, s
	\end{equation}
	for sufficiently small $t>0$ and a matrix $a=(a_{ij})_{1\leq i,j\leq s}\in \R^{s\times s}$.\ We first show that \eqref{eq.rungekutta} has a unique solution that satisfies a certain a priori estimate. To that end, for $t>0$, we consider the fixed-point map $G_t\colon \R^d\times \big(\R^d\big)^s\to \big(\R^d\big)^s$, given by
	\[
	G_t^i(x,k):=F\bigg(x+h \sum_{j=1}^s a_{ij} k^j\bigg)\quad\text{for all }x\in \R^d\text{ and }i=1,\ldots, s.
	\]
	Then, for all $t>0$, $x_1,x_2\in \R^d$, and $k_1,k_2\in \big(\R^d\big)^s$,
	\begin{align}
		\notag |G_t(x_1,k_1)-G_t(x_2,k_2)|_1&\leq sL|x_1-x_2| +tL\sum_{j=1}^s |a_{ij}| |k_1^j-k_2^j|\\
		&\leq sL|x_1-x_2| +tL\|a\|_1 |k_1-k_2|_1, \label{eq.apriori.rk} 
	\end{align}
	where
	\[
	\|a\|_1:=\max_{j=1,\ldots, s}\sum_{i=1}^s |a_{ij}|\quad\text{and}\quad |k|_1:=\sum_{i=1}^s |k^i|\quad\text{for all }k\in \big(\R^d\big)^s.
	\]
	Let $t_0>0$ with $t_0L\|a\|_1\leq 1$.\ Then, by Banach's fixed point theorem, for all $x\in \R^d$ and $t\in (0,t_0)$, the map $\big(\R^d\big)^s\to \big(\R^d\big)^s, \,k\mapsto G_t(x,k)$ has a unique fixed point, which we denote by $k_t(x)\in \big(\R^d\big)^s$, and we define
	\[
	\psi_t(x):=x+h\sum_{i=1}^sb_i k_t^i(x)\quad\text{for all }x\in \R^d\text{ and }t\in (0,t_0)
	\]
	with weights $b_1,\ldots, b_s\in \R$ satisfying $\sum_{i=1}^sb_i=1$.\ Moreover, for $t\geq t_0$ and $x\in \R^d$, we set $\psi_t(x):=x$.\ Using Gronwall's lemma together with the estimate \eqref{eq.apriori.rk} and $t_0L\|a\|_1\leq 1$, it follows that
	\[
	\sup_{x\in \R^d}|k_t(x+u)-k_t(x)|_1\leq sLe^{L\|a\|_1 t}t|u|\leq sLet|u|\quad\text{for all }u\in \R^d\text{ and }t\in (0,t_0).
	\]
	The family $(\psi_t)_{t>0}$ thus satisfies \eqref{eq.biton} with $\omega=sLe\max_{i=1,\ldots,s}|b_i|$ for $h_0=t_0$.\footnote{Actually for all $h_0>0$ since we trivially set $\psi_t(x):=x$ for all $t\geq t_0$.}
	
	Moreover, for all $x\in \R^d$ and $h\in (0,t_0)$,
	\[
	|k_t(x)|_1\leq s|F(x)|+tL\sum_{i=1}^s|a_{ij}||k_t^j(x)|\leq s|F(x)|+tL\|a\|_1|k_h(x)|_1,
	\]
	which, by Gronwall's lemma together with $t_0L\|a\|_1\leq 1$, implies that
	\[
	|k_t(x)|_1\leq se^{L\|a\|_1t}|F(x)|\leq se |F(x)|\quad \text{for all }x\in \R^d\text{ and }t\in (0,t_0).
	\]
	Hence, for all $x\in \R^d$,
	\[
	\frac{\psi_h(x)-x}{h}=\sum_{i=1}^s b_iF\bigg(x+h\sum_{j=1}^sa_{ij} k_h^j(x)\bigg)\to  F(x)\quad\text{as }h\downarrow 0,
	\]
	which shows that Condition \eqref{cond.Dstar} is satisfied.
\end{example}

We continue with a series of immediate consequences that can be drawn from Condition \eqref{cond.D}.

\begin{lemma}\label{lem.bitonA2}\
Assume that Condition \eqref{cond.D} is satisfied.
	\begin{enumerate}
    \item[a)] It holds
    \begin{equation}\label{eq.biton.global}
			\sup_{h\in (0,h_0)}\sup_{x\in \R^d}\frac{|\psi_h(x+u)-\psi_h(x)-u|}{h}\leq \omega |u|\quad\text{for all }u\in \R^d.
		\end{equation}    
        In particular,
         \begin{equation}\label{eq.lip.ref}
			|\psi_h(x_1)-\psi_h(x_2)|\leq e^{\om h}|x_1-x_2|
		\end{equation}    
        for all $h\in (0,h_0)$ and $x_1,x_2\in \R^d$.
\item[b)] There exists a constant $C\geq 0$ such that
		\begin{equation}\label{eq.lineargrowthpsi}
			\sup_{h\in (0,h_0)}\frac{|\psi_h(x)-x|}{h}\leq C\big(1+|x|\big)\quad\text{for all }x\in \R^d.
		\end{equation}	
        \item[c)] For all $R\geq0$ and $r>R$, there exists some $\de\in (0,h_0]$ such that $|\psi_{h}(x)|>R$ for all $h\in (0,\de)$ and $x\in \R^d$ with $|x|> r$.
             \item[d)] Let $f\in \Cb^1(\R^d)$ with
	\begin{equation}\label{eq.bound.derivative}
		C_f:=\sup_{x\in \R^d}\big(1+|x|\big)\big|\nabla f(x)\big|<\infty
	\end{equation}	
	and assume that the family $(\mu_t)_{t>0}$ satisfies Condition \eqref{cond.M}.\ Then, 
		\[
		\sup_{t>0}\bigg\|\int_{\R^d}\frac{f\big(\psi_{t}(\,\cdot\,)+y\big)-f(\,\cdot+y)}{t}\,\mu_{t}(\d y)\bigg\|_\infty<\infty.
		\]
	\end{enumerate}
\end{lemma}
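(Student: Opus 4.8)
The plan is to establish part a) first and to use it as the main tool for parts b)--d).

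For part a), I would obtain the global estimate \eqref{eq.biton.global} from \eqref{eq.biton} by a chaining argument: given $u\in\R^d\setminus\{0\}$, choose $n\in\N$ with $n\ge |u|/\de$, set $v:=u/n$ (so $|v|\le\de$) and $x_k:=x+kv$, and telescope
\[
\psi_h(x+u)-\psi_h(x)-u=\sum_{k=0}^{n-1}\big(\psi_h(x_k+v)-\psi_h(x_k)-v\big).
\]
Each summand is bounded by $\om h|v|$ via \eqref{eq.biton}, so the sum is bounded by $n\om h|v|=\om h|u|$, which is \eqref{eq.biton.global}. The Lipschitz estimate \eqref{eq.lip.ref} then follows at once: with $u:=x_1-x_2$ and base point $x_2$, the triangle inequality and \eqref{eq.biton.global} give $|\psi_h(x_1)-\psi_h(x_2)|\le(1+\om h)|x_1-x_2|$, and $1+\om h\le e^{\om h}$.

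For parts b) and c) I would first invoke the clause $\limsup_{h\downarrow0}|\psi_h(0)|/h<\infty$ of \eqref{cond.D}, which (after shrinking $h_0$, an operation preserving \eqref{cond.D}) lets me assume $C_0:=\sup_{h\in(0,h_0)}|\psi_h(0)|/h<\infty$. For b), writing $\psi_h(x)-x=\big(\psi_h(x)-\psi_h(0)-x\big)+\psi_h(0)$ and applying \eqref{eq.biton.global} with $u=x$ and base point $0$ gives $|\psi_h(x)-x|\le \om h|x|+C_0h\le(\om+C_0)h(1+|x|)$, so $C=\om+C_0$ works in \eqref{eq.lineargrowthpsi}. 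For c), the reverse triangle inequality together with \eqref{eq.biton.global} yields $|\psi_h(x)|\ge|x|(1-\om h)-C_0h$; for $|x|>r$ and $h$ small enough that $\om h<1$ this exceeds $r-(r\om+C_0)h$, and choosing $\de\in(0,h_0]$ with $\om\de<1$ and $(r\om+C_0)\de\le r-R$ makes the right-hand side $>R$ for all $h\in(0,\de)$.

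Part d) is the substantive step. For $t\ge h_0':=\min\{h_0,\tfrac1{4C}\}$ the crude bound $|f(\psi_t(x)+y)-f(x+y)|\le2\|f\|_\infty$, integrated against the probability measure $\mu_t$, gives a contribution $\le 2\|f\|_\infty/h_0'$. For $t\in(0,h_0')$, part b) yields $|w|\le \tfrac14(1+|x|)$ for $w:=\psi_t(x)-x$, and the fundamental theorem of calculus gives
\[
f(\psi_t(x)+y)-f(x+y)=\int_0^1\big\langle \nabla f(x+y+sw),w\big\rangle\,\d s.
\]
I would then split the $\mu_t$-integral at $|y|=\tfrac{1+|x|}4$. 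On $\{|y|\le\tfrac{1+|x|}4\}$ the interpolation point satisfies $1+|x+y+sw|\ge\tfrac12(1+|x|)$, so the gradient decay \eqref{eq.bound.derivative} and the growth bound $|w|/t\le C(1+|x|)$ from b) cancel the two factors of $1+|x|$, leaving a contribution $\le 2CC_f$. On $\{|y|>\tfrac{1+|x|}4\}$ I would keep the factor $1/t$ and use the crude bound, so that Lemma \ref{lem.apriori.gamma} a) bounds the contribution by $2\|f\|_\infty\,\mu_t(\{|y|>\tfrac{1+|x|}4\})/t\le 2\|f\|_\infty c_{1/4}$, using $\tfrac{1+|x|}4\ge\tfrac14$ and that $M\mapsto\mu_t(\{|y|>M\})$ is non-increasing.

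The main obstacle is precisely this last part: the naive estimate from the mean value theorem carries a factor $|w|\sim 1+|x|$ that is unbounded in $x$, so the proof must exploit the decay of $|\nabla f|$ at scale $1+|x|$ exactly on the set of small $y$ and fall back on the crude bound together with the tightness estimate of Lemma \ref{lem.apriori.gamma} a) on the set of large $y$. Shrinking $h_0$ so that the whole segment $\{x+sw:s\in[0,1]\}$ stays at scale $|x|$ is what makes the denominator bound $1+|x+y+sw|\ge\tfrac12(1+|x|)$ available on the near region.
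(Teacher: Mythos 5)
Your proposal is correct and follows essentially the same route as the paper in all four parts: the chaining/telescoping argument for a), the $\psi_h(0)$ bound combined with \eqref{eq.biton.global} for b), the reverse triangle inequality for c), and for d) the splitting of the $\mu_t$-integral so that the decay of $\nabla f$ at scale $1+|x|$ cancels the growth $|\psi_t(x)-x|/t\leq C(1+|x|)$ on the near region while Lemma \ref{lem.apriori.gamma}~a) handles the far region. The only cosmetic difference is that you split at the $x$-dependent radius $\tfrac{1+|x|}{4}$ (reducing to $c_{1/4}$ via monotonicity of $M\mapsto \mu_t(\{|y|>M\})$) whereas the paper splits at the fixed radius $\tfrac14$ and absorbs $|y|$ and $|\psi_h(0)|$ into the denominator estimate; both yield the same bound $2\|f\|_\infty c_{1/4}+2CC_f$.
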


\begin{proof}\
	\begin{enumerate}
    		\item[a)] Let $u\in \R^d$ and $n\in \N$ with $|v|\leq \delta$ for $v:=\frac{u}{n}$.\ Then, for all $h\in (0,h_0)$ and $x\in \R^d$,
		\[
		\frac{|\psi_h(x+u)-\psi_h(x)-u|}{h}\leq \sum_{i=1}^n\frac{\big|\psi_h(x+iv)-\psi_h\big(x+(i-1)v\big)-v\big|}{h}\leq \omega n|v|=\omega |u|.
		\]
		Using the triangle inequality and \eqref{eq.biton.global}, it follows that
		\[
			|\psi_h(x_1)-\psi_h(x_2)|\leq (1+\omega h)|x_1-x_2|\leq e^{\omega h}|x_1-x_2|
		\]
		for all $h\in (0,h_0)$ and $x_1,x_2\in \R^d$.
		\item[b)] By Assumption (D),
		\begin{equation}\label{eq.rh}
		\rh:=\sup_{h\in (0,h_0)}\frac{|\psi_{h}(0)|}{h}<\infty.
		\end{equation}
		Hence, by \eqref{eq.biton.global} in part a),
		\[
		\sup_{h\in (0,h_0)}\frac{|\psi_{h}(x)-x|}{h}\leq \max\{\rh,\omega\}\big(1+|x|\big)\quad\text{for all }x\in \R^d.
		\]
        	\item[c)] Let $R\geq 0$, $r>R$, $C\geq 0$ such that \eqref{eq.lineargrowthpsi} is satisfied, and  $\de\in (0,h_0]$ with $C\de\leq \frac{r-R}{1+r}$. Then, using the inverse triangle inequality, for all $h\in (0,\de)$ and $x\in \R^d$ with $|x|> r$,
		\begin{align*}
			|\psi_{h}(x)|&\geq |x|-|\psi_{h}(x)-x|\geq |x|-C\de (1+|x|)=(1+|x|)\big(1-C\de\big)-1\\
			&\geq (1+|x|)\frac{1+R}{1+r}-1>R.
		\end{align*}
        \item[d)] Let $C\geq 0$ such that \eqref{eq.lineargrowthpsi} holds and $\de\in (0,h_0]$ such that
		\[
		 \om\de\leq \frac12\quad \text{and}\quad \big|\psi_{h}(0)\big|\leq \frac14\quad \text{for all }h\in (0,\de).
		\]
		Then, using Lemma \ref{lem.apriori.gamma}, the fundamental theorem of calculus, part a), and part b), for all $x\in \R^d$ and $h\in (0,\de)$,
		\begin{align*}
		  \bigg|\int_{\R^d}& \frac{f\big(\psi_{h}(x)+y\big)-f(x+y)}{h}\,\mu_{h}(\d y)\bigg|\leq 2\|f\|_\infty \frac{\mu_{h}\big(\big\{y\in \R^d\,\big|\, |y|> \tfrac14\big\}\big)}{h}\\
		  &\quad + \bigg|\int_{\{|y|\leq \frac14\}} \int_0^1\bigg\langle\nabla f\Big(x+y+s\big(\psi_{h}(x)-x\big)\Big),\frac{\psi_{h}(x)-x}{h}\bigg\rangle \,\d s\,\mu_{h}(\d y)\bigg|\\
		  &\leq 2\|f\|_\infty c_{\frac14}+C C_f\sup_{|y|\leq\frac14}\sup_{s\in [0,1]}\frac{1+|x|}{1+\big|x+y+s\big(\psi_{h}(x)-x\big)\big|}\\
		  &\leq 2\|f\|_\infty c_{\frac14}+C C_f\sup_{|y|\leq\frac14}\frac{1+|x|}{1+(1-h\om)|x|-|y|-\big|\psi_{h}(0)\big|}\\
		  &\leq 2\|f\|_\infty c_{\frac14}+2C C_f.
		\end{align*}
        Moreover, for $t\geq \de$ and $x\in \R^d$,
        \[
         \bigg|\int_{\R^d} \frac{f\big(\psi_{t}(x)+y\big)-f(x+y)}{t}\,\mu_{t}(\d y)\bigg|\leq \frac{2}{\de}\|f\|_\infty,
        \]
        which completes the proof.
	\end{enumerate}
\end{proof}

\begin{proposition}\label{prop.existence.F}
   Assume that Condition \eqref{cond.D} is satisfied.\ Then, every null sequence in $(0,\infty)$ has a subsequence $(h_n)_{n\in \N}$
 such that, for all $r\geq 0$,
        \begin{equation}\label{eq.uniformderivative}
			\sup_{|x|\leq r} \bigg|\frac{\psi_{h_n}(x)-x}{h_n}-F(x)\bigg|\to 0\quad\text{as }n\to \infty,
		\end{equation}	
        where $F\colon \R^d\to \R^d$ is $\om$-Lipschitz, cf. \ref{eq.Fgloballip}.
\end{proposition}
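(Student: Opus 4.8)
The plan is to view the rescaled difference quotients $g_h(x):=\frac{\psi_h(x)-x}{h}$ as a family of uniformly Lipschitz, locally uniformly bounded functions, and to extract the desired limit by the Arzel\`a--Ascoli theorem combined with a diagonal argument over an exhausting sequence of balls.

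First I would record the two key properties of the family $(g_h)_{h\in(0,h_0)}$. By Lemma \ref{lem.bitonA2} a), for all $h\in(0,h_0)$ and $x,u\in\R^d$,
\[
|g_h(x+u)-g_h(x)|=\frac{|\psi_h(x+u)-\psi_h(x)-u|}{h}\leq\omega|u|,
\]
so each $g_h$ is $\omega$-Lipschitz; in particular the family $(g_h)_{h\in(0,h_0)}$ is equicontinuous. By \eqref{eq.lineargrowthpsi} in Lemma \ref{lem.bitonA2} b), $|g_h(x)|\leq C(1+|x|)$ for all $h\in(0,h_0)$ and $x\in\R^d$, so the family is uniformly bounded on every compact subset of $\R^d$.

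Given a null sequence $(t_k)_{k\in\N}\subset(0,\infty)$, I may discard finitely many terms so that $t_k\in(0,h_0)$ for all $k$, since $t_k\to0$. For each $R\in\N$, the restrictions of $(g_{t_k})_{k\in\N}$ to the closed ball $\{x\in\R^d\,|\,|x|\leq R\}$ are uniformly bounded and equicontinuous, whence the Arzel\`a--Ascoli theorem provides a subsequence converging uniformly there. A diagonal extraction over $R\in\N$ then yields a single subsequence $(h_n)_{n\in\N}$ of $(t_k)_{k\in\N}$ along which $g_{h_n}$ converges uniformly on each ball $\{|x|\leq R\}$, and hence locally uniformly on $\R^d$; defining $F(x):=\lim_{n\to\infty}g_{h_n}(x)$ gives precisely the convergence \eqref{eq.uniformderivative}.

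Finally, the limit inherits the Lipschitz estimate: for all $x,y\in\R^d$,
\[
|F(x)-F(y)|=\lim_{n\to\infty}|g_{h_n}(x)-g_{h_n}(y)|\leq\omega|x-y|,
\]
so $F$ is $\omega$-Lipschitz, cf.\ \eqref{eq.Fgloballip}. The argument is largely routine once equicontinuity and local boundedness are in hand; the only step requiring genuine care is the diagonal extraction needed to upgrade uniform convergence on each ball to locally uniform convergence on all of $\R^d$ along one and the same subsequence.
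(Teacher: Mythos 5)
Your proof is correct and is essentially the paper's argument in different packaging: the paper performs the same compactness extraction by hand, diagonalizing over a countable dense set to get pointwise convergence, extending the $\omega$-Lipschitz limit by density, and then using the equi-Lipschitz bound with a finite net to upgrade to uniform convergence on compacts, which is precisely the content of the Arzel\`a--Ascoli theorem you invoke. Both rely on the same two inputs from Lemma \ref{lem.bitonA2}, namely the uniform $\omega$-Lipschitz estimate for $g_h$ from part a) and the local boundedness from part b).
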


\begin{proof}
 Let $D\subset \R^d$ be a countable dense subset of $\R^d$.\ Then, using a diagonal argument, there exists a sequence $(h_n)_{n\in \N}\subset (0,h_0)$ with $h_n\to 0$ as $n\to \infty$ such that
		\begin{equation}\label{eq.conv.ondense}
		F(x):=\lim_{n\to \infty}\frac{\psi_{h_n}(x)-x}{h_n}\in \R^d\quad\text{exists for all }x\in D.
		\end{equation}
		By Lemma \ref{lem.bitonA2} a), it follows that
		\[
		| F(x_1)-F(x_2)|= \lim_{n\to \infty}\bigg|\frac{\psi_{h_n}(x_1)-\psi_{h_n}(x_2)-(x_1-x_2)}{h_n}\bigg|\leq \omega|x_1-x_2|\quad\text{for all }x_1,x_2\in D.
		\]
		Since $D$ is dense in $\R^d$, it follows that $F\colon D\to \R^d$ can be uniquely extended to an $\om$-Lipschitz function $F\colon \R^d\to \R^d$, i.e., \eqref{eq.Fgloballip} holds.\
        
        Now, let $r\geq0$ and $\ep>0$.\ Then, there exist $k\in \N$ and $x_1,\ldots,x_k\in D$ such that, for all $x\in \R^d$ with $|x|\leq r$, there exists some $i\in \{1,\ldots, k\}$ with $\omega |x-x_i|<\frac{\ep}{3}$. By \eqref{eq.conv.ondense}, there exists some $n_0\in \N$ such that, for all $n\in \N$ with $n\geq n_0$,
		\[
		\sup_{i=1,\ldots, k}\bigg|\frac{\psi_{h_n}(x_i)-x_i}{h_n}-F(x_i)\bigg|\leq \frac{\ep}{3}.
		\] 
		Hence, by \eqref{eq.biton.global} and \eqref{eq.Fgloballip}, it follows that
		\[
		\sup_{|x|\leq r} \bigg|\frac{\psi_{h_n}(x)-x}{h_n}-F(x)\bigg|< \frac{2\ep}{3}+ \sup_{i=1,\ldots, k}\bigg|\frac{\psi_{h_n}(x_i)-x_i}{h_n}-F(x_i)\bigg|\leq \ep
		\]
		for all $n\in \N$ with $n\geq n_0$.
\end{proof}

We are now ready to identify the generator in the mixed topology of the Koopman semigroup related to the semiflow $(\psi_{h_n})_{n\in \N}$ on a subspace of $\Cb^1(\R^d)$ in a generalized setting.

\begin{proposition}\label{prop.derivative}
Assume that the conditions \eqref{cond.M} and \eqref{cond.D} are satisfied, and consider a null sequence $(h_n)_{n\in \N}\subset (0,\infty)$ and an $\om$-Lipschitz function $F\colon \R^d\to \R^d$ such that \eqref{eq.uniformderivative} holds for all $r\geq 0$.\
	\begin{enumerate}
		\item[a)] Let $f\in \Cb^1(\R^d)$. Then, for all $r\geq 0$,
	\[
	\sup_{|x|\leq r}\bigg|\int_{\R^d}\frac{f\big(\psi_{h_n}(x)+y\big)-f(x+y)}{h_n}\,\mu_{h_n}(\d y)-\big\langle\nabla f(x),F(x)\big\rangle\bigg|\to 0\quad\text{as }n\to\infty.
	\]
    If $f$ satisfies \eqref{eq.bound.derivative}, then
    \[
    \lim_{n\to \infty}\int_{\R^d}\frac{f\big(\psi_{h_n}(\,\cdot\,)+y\big)-f(\,\cdot +y)}{h_n}\,\mu_{h_n}(\d y)\to \big\langle\nabla f,F\big\rangle\quad \text{in the mixed topology.}
    \]
    \item[b)] Let $f\in \Cb^2(\R^d)$ with
    \begin{equation}\label{eq.apriori.mixed}
		\sup_{n\in \N}\bigg\|\int_{\R^d}\frac{f\big(\psi_{h_n}(\,\cdot\,)+y\big)-f}{h_n}\,\mu_{h_n}(\d y)\bigg\|_\infty<\infty.
        \end{equation}
    Then,
    		\begin{equation}\label{eq.bound.derivative.1}
			\sup_{x\in \R^d}\big|\big\langle F(x),\nabla f(x)\big\rangle\big|<\infty.
		\end{equation}
	\item[c)] Assume that also Condition \eqref{cond.T} is satisfied and let $f\in \Cc^1(\R^d)$.\ Then,
    \[
	\lim_{n\to \infty}\bigg\|\int_{\R^d}\frac{f\big(\psi_{h_n}(\,\cdot\,)+y\big)-f(\,\cdot +y)}{h_n}\,\mu_{h_n}(\d y)-\big\langle\nabla f,F\big\rangle\bigg\|_\infty=0.
	\]
 	
	\end{enumerate}
\end{proposition}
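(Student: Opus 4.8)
The plan is to combine the uniform-on-compacts convergence established in part a) with a tail estimate that exploits the compact support of $f$ together with Condition \eqref{cond.T} and Lemma \ref{lem.bitonA2} c). Since $f\in \Cc^1(\R^d)$, fix $R>0$ with $f(x)=0$ and $\nabla f(x)=0$ for all $x\in \R^d$ with $|x|>R$; in particular, $\langle \nabla f,F\rangle$ vanishes outside $\{|x|\le R\}$. Writing $g_n(x):=\int_{\R^d}\frac{f(\psi_{h_n}(x)+y)-f(x+y)}{h_n}\,\mu_{h_n}(\d y)$ for brevity, it suffices to control $\|g_n-\langle\nabla f,F\rangle\|_\infty$ by splitting $\R^d$ into a sufficiently large ball, on which part a) applies, and its complement, on which I will show that $g_n$ is uniformly small for large $n$.

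To control the tail, fix $\ep>0$ and choose $M_\ep>0$ according to Condition \eqref{cond.T}, so that $\frac{\mu_h(\{|y|>M_\ep\})}{h}<\ep$ for all sufficiently small $h>0$. Applying Lemma \ref{lem.bitonA2} c) with the radii $R+M_\ep$ and $r:=R+M_\ep+1$ yields some $\delta\in(0,h_0]$ such that $|\psi_h(x)|>R+M_\ep$ whenever $h\in(0,\delta)$ and $|x|>r$. For such $x$ and all $n$ large enough that $h_n<\delta$, every $y$ with $|y|\le M_\ep$ then satisfies $|x+y|\ge |x|-|y|>R$ and $|\psi_{h_n}(x)+y|\ge |\psi_{h_n}(x)|-|y|>R$, so that both $f(x+y)$ and $f(\psi_{h_n}(x)+y)$ vanish. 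Consequently only the region $\{|y|>M_\ep\}$ contributes to $g_n(x)$, whence $|g_n(x)|\le 2\|f\|_\infty\frac{\mu_{h_n}(\{|y|>M_\ep\})}{h_n}<2\|f\|_\infty\ep$. Since moreover $\langle\nabla f(x),F(x)\rangle=0$ for $|x|>r>R$, this bounds $|g_n(x)-\langle\nabla f(x),F(x)\rangle|$ by $2\|f\|_\infty\ep$ uniformly over $\{|x|>r\}$ for all large $n$.

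It remains to combine the two regimes. On the ball $\{|x|\le r\}$, part a) gives $\sup_{|x|\le r}|g_n(x)-\langle\nabla f(x),F(x)\rangle|\to 0$ as $n\to\infty$, and on the complement the tail estimate above holds for all large $n$. Taking the supremum over all of $\R^d$ and letting $n\to\infty$ yields $\limsup_{n\to\infty}\|g_n-\langle\nabla f,F\rangle\|_\infty\le 2\|f\|_\infty\ep$; since $\ep>0$ was arbitrary, the claimed convergence in the supremum norm follows.

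I expect the main obstacle to be the uniform control of the tail, rather than any single estimate: one must simultaneously discard the large-jump part of the measure via Condition \eqref{cond.T} and prevent the deterministic flow $\psi_{h_n}$ from dragging far-away points back into the compact support of $f$. The latter is precisely what Lemma \ref{lem.bitonA2} c) guarantees, and it is the interplay of these two mechanisms that upgrades the uniform-on-compacts statement of part a) to genuine uniform convergence.
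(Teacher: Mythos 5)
Your argument for part c) is correct and is essentially the paper's own proof: the same two--regime decomposition of $\R^d$ into a large ball, where part a) gives uniform convergence, and its complement, where Condition \eqref{cond.T} discards the large-jump part of $\mu_{h_n}$ and Lemma \ref{lem.bitonA2}~c) prevents $\psi_{h_n}$ from mapping points with $|x|>r$ back into the (enlarged) support of $f$, so that the integrand vanishes for $|y|\le M_\ep$ and only the tail $\{|y|>M_\ep\}$ contributes. The only difference is bookkeeping: you enlarge the radius to $R+M_\ep$ before invoking Lemma \ref{lem.bitonA2}~c), whereas the paper chooses $R$ from the outset so that $f(x+y)=0$ whenever $|x|>R$ and $|y|\le M$; both are fine. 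You also correctly use that $\langle\nabla f,F\rangle\in\Cc(\R^d)$, exactly as the paper does.

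However, the statement is the whole proposition, and your proposal only establishes part c), taking part a) as an input (``the uniform-on-compacts convergence established in part a)'') and omitting part b) entirely. That is a genuine gap. Part a) is where the analytic work sits: one writes $f\big(\psi_{h_n}(x)+y\big)-f(x+y)=\int_0^1\big\langle\nabla f\big(x+y+s(\psi_{h_n}(x)-x)\big),\psi_{h_n}(x)-x\big\rangle\,\d s$, splits off $\langle\nabla f(x),F(x)\rangle$, and controls the two error terms using the linear-growth bound \eqref{eq.lineargrowthpsi}, the locally uniform convergence \eqref{eq.uniformderivative}, and the fact that Condition \eqref{cond.M} forces $\mu_{h_n}\to\de_0$ weakly; the second claim of a) additionally needs the uniform bound from Lemma \ref{lem.bitonA2}~d) to get convergence in the mixed topology. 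Part b) then follows by combining the hypothesis \eqref{eq.apriori.mixed} with the a priori estimate of Lemma \ref{lem.apriori.gamma} (to bound the drift part of the difference quotient uniformly) and the pointwise limit from part a). If part a) is granted, your part c) is complete and matches the paper; as a proof of the full proposition the proposal is incomplete.
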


\begin{proof}\
	\begin{enumerate}
		\item[a)] Let $f\in \Cb^1(\R^d)$. Then, by the fundamental theorem of infinitesimal calculus, \eqref{eq.M2.weaker}, \eqref{eq.lineargrowthpsi}, and \eqref{eq.uniformderivative}, for all $r\geq0$,
		\begin{align*}
			\sup_{|x|\leq r}&\bigg|\int_{\R^d} \frac{f\big(\psi_{h_n}(x)+y\big)-f(x+y)}{h_n}\,\mu_{h_n}(\d y)-\big\langle\nabla f(x),F(x)\big\rangle\bigg|\\
			&\leq \sup_{|x|\leq r}C(1+|x|)\int_{\R^d}\int_0^1 \Big|\nabla f\Big(x+y+s\big(\psi_{h_n}(x)-x\big)\Big)-\nabla f(x)\Big|\, \d s\,\mu_{h_n}(\d y)\\
			&\quad + \|\nabla f\|_\infty \sup_{|x|\leq r}  \bigg|\frac{\psi_{h_n}(x)-x}{h_n}-F(x)\bigg| 
            \to 0\quad\text{as } n\to\infty.
		\end{align*}
        \item[b)] Let $f\in \Cb^2(\R^d)$ with \eqref{eq.apriori.mixed}.\ Then, by Lemma \ref{lem.apriori.gamma} together with the triangle inequality,
        \[
          \sup_{n\in \N}\sup_{x\in \R^d} \bigg|\int_{\R^d}\frac{f\big(\psi_{h_n}(x)+y\big)-f(x+y)}{h_n}\,\mu_{h_n}(\d y)\bigg|<\infty,
        \]
        so that, by part a), \eqref{eq.bound.derivative.1} holds.
		\item[c)] Let Condition \eqref{cond.T} be satisfied, $f\in \Cc^1(\R^d)$, and $\ep>0$.\ Then, there exist $M\geq0$ and $n_1\in \N$ such that
		\[
		2\|f\|_\infty\frac{\mu_{h_n}\big(\big\{y\in \R^d\,\big|\, |y|>M\big\}\big)}{h_n}\leq \ep\quad\text{for all }n\in \N\text{ with }n\geq n_1.
		\]
		Since $f\in \Cc^1(\R^d)$, there exist some $R\geq 0$ such that $f(x+y)=0$ for all $x,y\in \R^d$ with $|x|>R$ and $|y|\leq M$.\ Let $r>R$.\ Then, by Lemma \ref{lem.bitonA2} c), there exists some $n_2\in \N$ such that $|\psi_{h_n}(x)|>R$ for all $n\in \N$ with $n\geq n_2$ and $x\in \R^d$ with $|x|>r$. Hence, for all $n\in \N$ with $n\geq n_0:=\max\{n_1,n_2\}$ and $x\in \R^d$ with $|x|>r$,
		\begin{align*}
			\bigg|\int_{\R^d}\frac{f\big(\psi_{h_n}(x)+y\big)-f(x+y)}{h_n}\,\mu_{h_n}(\d y)\bigg|&=\bigg|\int_{|y|>M}\frac{f\big(\psi_{h_n}(x)+y\big)-f(x+y)}{h_n}\,\mu_{h_n}(\d y)\bigg|\\
			&\leq 2\|f\|_\infty\frac{\mu_{h_n}\big(\big\{y\in \R^d\,\big|\, |y|>M\big\}\big)}{h_n}\leq \ep.
		\end{align*}
		The statement now follows from part a) together with the fact that $\langle\nabla f,F\rangle\in \Cc(\R^d)$. 
	\end{enumerate}
\end{proof}

\section{Proofs of Section \ref{sec.setup}}\label{sec.proofs.main}

\begin{proof}[Proof of Theorem \ref{thm.main.mehler}]
 Let $(t_n)_{n\in \N}\subset (0,\infty)$ be a null sequence.\ Then, by Proposition \ref{prop.conv.levy.mixed} and Proposition \ref{prop.existence.F}, there exist a subsequence $(h_n)_{n\in \N}$ of $(t_n)_{n\in \N}$, a L\'evy triplet $(b,\si,\nu)$, and an $\om$-Lipschitz function $F\colon \R^d\to \R^d$ such that
 \begin{equation}\label{eq.full.generator}
   \lim_{n\to \infty}\frac{I(h_n)f -f}{h_n}= L_{b,\si,\nu}f+ \langle\nabla f,F\rangle\quad \text{in the mixed top.\ for all }f\in \Ccinf(\R^d).
 \end{equation}
 
 In the first part of proof, we verify the conditions (i)--(v) in \cite[Assumption 2.8]{blessing2022convergence} for the family $(P_{h_n})_{n\in \N}$ with $\kappa=1$ and $X_n=\R^d$ for all $n\in \N$ in order to apply \cite[Theorem 2.9]{blessing2022convergence}. The conditions (i) and (ii) are trivially satisfied in our case. By Lemma \ref{lem.apriori.gamma} and Lemma \ref{lem.smoothcutoff} with $\kappa(s)=1+s$ for all $s\geq 0$, for all $\ep>0$ and $R\geq 0$, there exists a function $\chi\in \Ccinf(\R^d)$ with $0\leq \chi\leq 1$, $\chi(x)=1$ for all $x\in \R^d$ with $|x|\leq R$, and
 \[
 \sup_{x\in \R^d}\big((1+|x|)\big|\nabla \chi(x)\big|+\big|\big(L_{(b,\si,\nu)} \chi\big)(x)\big|\big)\leq \frac\ep2.
 \]
 Hence, by Proposition \ref{prop.derivative} c), there exists some $n_0\in \N$ such that
 \[
   \| P_{h_n}\chi-\chi\|_\infty\leq  \ep h_n\quad \text{for all }n\in \N\text{ with }n\geq n_0,
 \]
 so that Condition (iii) follows from \cite[Lemma 2.13]{blessing2022convergence}.\ Condition (iv) follows from the estimate
 \[
 \big|(P_{h_n}f)(x_1)-(P_{h_n}f)(x_2)\big|\leq \|f\|_{\Lip} \big|\psi_{h_n}(x_1)-\psi_{h_n}(x_2)\big|\leq e^{\om h_n}\|f\|_{\Lip} |x_1-x_2|.
 \]
 for all $n\in \N$, $f\in \Lipb(\R^d)$, and $x_1,x_2\in \R^d$, where we used \eqref{eq.lip.ref}. Moreover, Condition (v) follows from \eqref{eq.full.generator} together with \cite[Remark 2.4]{blessing2022convergence}.
 
 By \cite[Theorem 2.9]{blessing2022convergence}, after potentially passing to a further subsequence, there exists a strongly continuous Feller semigroup $(S_t)_{t\geq 0}$ on $\Cb(\R^d)$ with generator $A\colon D(A)\to\Cb(\R^d)$, cf.\ Definition \ref{def.fellerSG} such that, for all $t\geq 0$, $f\in \Cb(\R^d)$, $(k_n)_{n\in \N}\subset \N$ with $k_nh_n\to t$ as $n\to \infty$, and $(f_n)_{n\in \N}\subset \Cb(\R^d)$ with $f_n\to f$ as $n\to \infty$,
 \begin{equation}\label{eq.semigroup.chernoff} S_tf=\lim_{n\to\infty}I_{h_n}^{k_n}f_{n} \quad\text{in the mixed topology}. \end{equation}
 Moreover, for all $f\in \Cb(\R^d)$ such that $\big(\frac{P_{h_n}f-f}{h_n}\big)_{n\in \N}$ converges in the mixed topology, it holds $f\in D(A)$ with
 \[
    Af = \frac{P_{h_n}f-f}{h_n}\quad \text{in the mixed topology}.
 \]
 The statement now follows from Corollary \ref{cor.comparison} since
    \[
    Af=A_{F,(b,\si,\nu)}f\quad \text{for all }f\in \Ccinf(\R^d).
    \]
\end{proof} 

\begin{proof}[Proof of Corollary \ref{thm.main.levy}]
	The claim directly follows from Theorem \ref{thm.main.mehler} and the definition of convergence in distribution, choosing $\psi_t(x)=x$ for all $t>0$ and $x\in \R^d$, and $f_n=f\in \Cb(\R^d)$ for all $n\in \N$.
\end{proof}	

\begin{proof}[Proof of Theorem \ref{thm.main.mehler.stronger}]
 This is a direct consequence of Corollary \ref{cor.comparison} together with Proposition \ref{prop.levy.uniqueness} the fact that a sequence of real numbers converges is and only if every subsequence has a convergent subsequence with the same limit.
\end{proof}

\begin{proof}[Proof of Corollary \ref{thm.main.mehler.stronger.Lp}]
	This is a direct consequence of Proposition \ref{prop.levygenerator} and the classical Chernoff approximation \cite[Theorem 5.3, p.\ 90]{pazy2012semigroups} together with \cite[Exercise II.1.15(2)]{engel2000one}, using the fact that $\BUC^2(\R^d)$ and $W_p^2(\R^d)$ as a core for the generator of the L\'evy process in the space $\BUC(\R^d)$ and $L_p(\R^d)$, respectively, and Proposition \ref{prop.chernoff.classical}.
\end{proof}

\appendix

\section{Regularization and Convolution}\label{app.C}

In this section, we provide some auxiliary results, which are used in several proofs. We start with a lemma on smooth cut-off functions.

\begin{lemma}\label{lem.smoothcutoff}
	Let $\kappa\colon [0,\infty)\to [0,\infty)$ be measurable and assume that there exist constants $r\geq 0$, $\de>0$, $c>0$, and $C\geq 1$ such that
	\begin{equation}\label{eq.cond-kappa}
	 c \leq \kappa(s)\leq C \kappa(s+t)\quad \text{for all }s\in [r,\infty)\text{ and }t\in [0,\de]
	\end{equation}
	and
	\[
	\int_r^\infty \frac{1}{\kappa(u)}\,\d u=\infty.\footnote{Observe that, by \eqref{eq.cond-kappa}, $\kappa(s)>0$ for all $s\in [r,\infty)$.\ Moreover, the function $1/\kappa$ is measurable on $[r,\infty)$ since $\kappa$ is assumed to be measurable.}
	\]
	Then, for all $\ep>0$ and $R\geq 0$, there exists a function $\chi\in \Ccinf(\R^d)$ with $0\leq \chi\leq 1$, $\chi(x)=1$ for all $x\in \R^d$ with $|x|\leq R$, and
\[
\sup_{x\in \R^d}\big(\kappa(|x|)\big|\nabla \chi(x)\big|+\big|\nabla^2 \chi(x)\big|\big)\leq \ep.
\]
\end{lemma}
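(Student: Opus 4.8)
The plan is to reduce everything to a one-dimensional, \emph{radial} construction and to exploit the divergence $\int_r^\infty\kappa^{-1}=\infty$ together with the one-sided doubling bound in \eqref{eq.cond-kappa}. I would look for $\chi$ of the form $\chi(x)=g(|x|)$, where $g\in C^\infty([0,\infty))$ is nonincreasing, equals $1$ on $[0,a]$ with $a:=\max\{r,R,1\}$, and has compact support. Writing $s=|x|$ and $\hat x=x/|x|$ for $x\neq 0$, one has $\nabla\chi(x)=g'(s)\hat x$ and
\[
\nabla^2\chi(x)=g''(s)\,\hat x\hat x^\top+\frac{g'(s)}{s}\big(I-\hat x\hat x^\top\big),
\]
so that $|\nabla\chi(x)|=|g'(s)|$, while $\nabla^2\chi(x)$ has eigenvalues $g''(s)$ and $g'(s)/s$ (of multiplicity $d-1$), whence $|\nabla^2\chi(x)|\le |g''(s)|+(d-1)\tfrac{|g'(s)|}{s}$. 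Since $g\equiv 1$ near $0$, the function $\chi$ is automatically smooth at the origin and the estimate is trivial there; it therefore suffices to construct $g$ so that $\kappa(s)|g'(s)|$, $|g''(s)|$, and $|g'(s)|/s$ are each at most $\ep/(3d)$ for $s>a$.

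\textbf{Main obstacle.} The tension is that forcing $\kappa(s)|g'(s)|$ to be small essentially requires $|g'(s)|\approx\kappa(s)^{-1}$, but $\kappa$ is merely measurable, so this choice of $g'$ is not even continuous and gives no control on $g''$. The key idea is to replace $\la:=1/\kappa$ by a smooth \emph{forward} mollification that stays pointwise comparable to $\la$. Fix $\phi\in\Ccinf(\R)$ with $\supp\phi\subseteq(0,\de)$, $\phi\ge 0$, $\int\phi=1$, and set
\[
\Lambda(s):=\int_0^\de\phi(u)\,\la(s+u)\,\d u\qquad(s\ge a),
\]
so that $\Lambda\in C^\infty$ with $\Lambda'(s)=-\int_0^\de\phi'(u)\la(s+u)\,\d u$. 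From $c\le\kappa$ I get $\la\le 1/c$, hence $\Lambda\le 1/c$ and $|\Lambda'|\le c^{-1}\|\phi'\|_{L^1}$; from $\kappa(s)\le C\kappa(s+u)$ in \eqref{eq.cond-kappa} (valid for $s\ge r$, $u\in[0,\de]$) I get $\la(s+u)\le C\la(s)$, whence $\Lambda(s)\le C\la(s)$ and thus $\kappa(s)\Lambda(s)\le C$. Finally, by Tonelli's theorem,
\[
\int_a^\infty\Lambda(s)\,\d s=\int_0^\de\phi(u)\int_{a+u}^\infty\frac{\d v}{\kappa(v)}\,\d u=\infty,
\]
since each inner integral is the tail of the divergent integral $\int_r^\infty\kappa^{-1}$.

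I would then reparametrize. Set $G(s):=\int_a^s\Lambda(u)\,\d u$, a smooth, strictly increasing bijection of $[a,\infty)$ onto $[0,\infty)$ (surjectivity is exactly the divergence just established). Fix once and for all a nonincreasing $\theta\in C^\infty(\R)$ with $\theta\equiv 1$ on $(-\infty,\tfrac12]$ and $\theta\equiv 0$ on $[1,\infty)$, and define $g(s):=\theta(\be G(s))$ for $s\ge a$ and $g(s):=1$ for $s\le a$, where $\be>0$ will be chosen. Because $G(a)=0$ and $\theta\equiv 1$ near $0$, the function $g$ equals $1$ on a neighborhood of $a$, so it glues smoothly with the constant piece and lies in $C^\infty([0,\infty))$; because $\theta\equiv 0$ on $[1,\infty)$, $g$ vanishes once $G(s)\ge\be^{-1}$, so $\chi\in\Ccinf(\R^d)$ and $0\le\chi\le 1$. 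Differentiating gives $g'=\be\,\theta'(\be G)\,\Lambda$ and $g''=\be^2\theta''(\be G)\Lambda^2+\be\,\theta'(\be G)\Lambda'$, so the bounds above yield
\[
\kappa(s)|g'(s)|\le C\|\theta'\|_\infty\be,\qquad |g''(s)|\le \tfrac{\|\theta''\|_\infty}{c^2}\be^2+\tfrac{\|\theta'\|_\infty\|\phi'\|_{L^1}}{c}\be,\qquad \frac{|g'(s)|}{s}\le\frac{\|\theta'\|_\infty}{c}\be,
\]
where in the last estimate I used $s\ge a\ge 1$ and $\Lambda\le 1/c$. All right-hand sides tend to $0$ as $\be\downarrow 0$, so choosing $\be$ small enough makes each of the three quantities at most $\ep/(3d)$, which by the first paragraph completes the construction. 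The only genuinely delicate point is the \emph{simultaneous} control of the weighted gradient and the Hessian, and this is precisely what the forward mollification (giving $\kappa\Lambda\le C$ and a bounded $\Lambda'$) together with the $G$-reparametrization (which spreads the descent over the divergent ``$\kappa^{-1}$-length'') are designed to achieve.
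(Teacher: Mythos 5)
Your proof is correct and rests on the same two ideas as the paper's: a forward (one-sided) mollification whose compatibility with the bound $\kappa(s)\leq C\kappa(s+t)$ keeps the smoothed version of $1/\kappa$ pointwise comparable to $1/\kappa$, and a profile that descends at rate proportional to $1/\kappa$ so that the divergence of $\int_r^\infty \kappa^{-1}$ lets the slope be made arbitrarily small. The only (cosmetic) difference is the order of operations—you mollify $1/\kappa$ first and then reparametrize via $G$ and a fixed ramp $\theta$, whereas the paper builds the Lipschitz profile $\gamma$ first and mollifies it afterwards—and you additionally make explicit the $|g'(s)|/s$ term in the Hessian of the radial cut-off, which the paper leaves implicit.
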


\begin{proof}
	By \eqref{eq.cond-kappa}, 
	$\int_r^R \frac{1}{\kappa(u)}\,\d u \leq \frac{R-r}{c}$ for all $R> r$, so that
	\begin{equation}\label{eq.proof.kappa.int}
		\int_R^\infty \frac{1}{\kappa(u)}\,\d u =\infty \quad \text{for all }R> r.
	\end{equation}
	Let $\th\in \Ccinf(\R)$ with $\th\geq 0$, $\int_\R \th(t)\, \d t=1$, and $\supp \th \subseteq [0,n\delta]$ for some $n\in \N$.\ W.l.o.g.\ we may assume that $\int_\R |\theta'(t)|\,\d t\leq c$.\footnote{Otherwise, let $\th_\al(t):=\al\th(\al t)$ for $t\in \R$ and $\al>0$ with $\al \int_\R |\theta'(t)|\,\d t\leq c$.\ Then, $\th_\al\geq 0$, $\int_\R \theta_\alpha(t)\,\d t=1$, $\supp\th_\al=[0,m\de]$ for $m\in \N$ with $m\geq \frac{n}\al$, and $\int_\R |\theta_\alpha'(t)|\,\d t=\al \int_\R |\theta'(t)|\,\d t\leq c$.}\ Moreover, let $\ep>0$ and $R> r$.\ Then, by \eqref{eq.proof.kappa.int}, there exists some constant $M> R+\de$ such that
	\[
	 \int_{R+n\de}^M \frac{1}{\kappa(u)}\,\d u=\frac{C^n}{\ep},
	\]
	and we define 
	$\ga\colon \R\to [0,1]$ by  
	\[
	\ga(s):=\begin{cases}
		1,& s\leq R+n\de,\\
		1-\frac{\ep}{C^n}\int_{R+n\de}^s \frac{1}{\kappa(u)}\,\d u,& s\in (R+n\de ,M],\\
		0,& s>M.
	\end{cases}
	\]
 Let $\chi_0\colon \R\to \R$ be given by
	\[
	\chi_0(s):=\int_{\R} \ga(s+t)\th(t)\, \d t\quad\text{for all }s\in \R.
	\]
	Then, $\chi_0$ belongs to $\Cnt^\infty(\R)$ with $\chi_0(s)=1$ for all $s\in (-\infty, R]$ and $\chi_0(s)=0$ for all $s\geq M$. For $s\in (R,M)$, it follows that
	\begin{align*}
		|\chi_0'(s)|&\leq \frac{\ep}{C^n} \int_0^{n\de} \frac{\th(t)}{\kappa(s+t)}\, \d t\leq\frac{\ep}{\kappa(s)} 
		\int_0^{n\de} \th(t)\, \d t= \frac{\ep}{\kappa(s)} 
	\end{align*}
	and
	\[
	\big|\chi_0''(s)\big|=\frac{\ep}{C^n} \int_0^{n\de} \frac{|\th'(t)|}{\kappa(s+t)}\, \d t\leq \frac{\ep}{c}  \int_0^{n\de} |\th'(t)|\, \d t\leq \ep.
	\]
	Defining $\chi(x):=\chi_0(|x|)$ for $x\in \R^d$, the claim follows.
\end{proof}

The following lemma is a generalization of the classical H\"older inequality for the convolution, tailored to the setting of this paper.

\begin{lemma}\label{lem.convolution}
    Let $\mu$ be a $\sigma$-finite measure on $\Bc(\R^d)$, $g\in L_1(\R^d,\mu)$, and $p\in [1,\infty]$.\ Then, the convolution operator $L_p(\R^d)\to L_p(\R^d),\, f\mapsto \int_{\R^d} f(\,\cdot+ y)g(y)\,\mu(\d y)$ is well-defined and
\begin{equation}\label{eq.interpol.conv}
     \bigg\|\int_{\R^d} f(\,\cdot + y)g(y)\,\mu(\d y)\bigg\|_p\leq \|f\|_p\|g\|_{1,\mu}\quad\text{for all }f\in L_p(\R^d).
    \end{equation}
\end{lemma}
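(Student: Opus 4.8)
The plan is to reduce the entire statement to Minkowski's integral inequality, after first establishing the estimate for the integral of absolute values; this single estimate simultaneously yields well-definedness and the norm bound \eqref{eq.interpol.conv}.

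First I would fix a Borel measurable representative of $f\in L_p(\R^d)$, so that $(x,y)\mapsto f(x+y)$ is Borel measurable as the composition of $f$ with the continuous addition map, and hence $(x,y)\mapsto f(x+y)g(y)$ is jointly measurable. Since both Lebesgue measure on $\R^d$ and $\mu$ are $\sigma$-finite, Tonelli's theorem and Minkowski's integral inequality are available on the product space. The key object is the nonnegative function
\[
\tilde f(x):=\int_{\R^d} |f(x+y)|\,|g(y)|\,\mu(\d y)\in[0,\infty],
\]
for which I claim $\|\tilde f\|_p\le \|f\|_p\,\|g\|_{1,\mu}$ for every $p\in[1,\infty]$.

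For $p\in[1,\infty)$ this is precisely Minkowski's integral inequality applied to $F(x,y):=|f(x+y)|\,|g(y)|$: the resulting right-hand side is $\int_{\R^d}\big(\int_{\R^d}|f(x+y)|^p\,\d x\big)^{1/p}|g(y)|\,\mu(\d y)$, and translation invariance of Lebesgue measure collapses the inner integral to $\|f\|_p$, giving $\|f\|_p\,\|g\|_{1,\mu}$. For the endpoint $p=\infty$, where Minkowski's integral inequality does not apply directly, I would instead run a short Tonelli argument: writing $N:=\{z\in\R^d : |f(z)|>\|f\|_\infty\}$, which is Lebesgue-null, the identity $\int_{\R^d}\int_{\R^d}\eins_N(x+y)|g(y)|\,\mu(\d y)\,\d x=\mathrm{Leb}(N)\,\|g\|_{1,\mu}=0$ forces, for Lebesgue-a.e.\ $x$, the bound $|f(x+y)|\le\|f\|_\infty$ for $\mu$-a.e.\ $y$, whence $\tilde f(x)\le\|f\|_\infty\,\|g\|_{1,\mu}$ almost everywhere.

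Finally, since $\tilde f\in L_p(\R^d)$ it is finite for Lebesgue-a.e.\ $x$, so the signed integral $\int_{\R^d} f(x+y)g(y)\,\mu(\d y)$ converges absolutely for a.e.\ $x$ and the operator $L_p\to L_p$ is well-defined; the pointwise triangle inequality $\big|\int_{\R^d} f(x+y)g(y)\,\mu(\d y)\big|\le \tilde f(x)$ then yields \eqref{eq.interpol.conv} upon taking $L_p$-norms. The only genuine subtlety — and thus the step to treat with care — is the $p=\infty$ endpoint, where one must transfer the a.e.\ bound $|f|\le\|f\|_\infty$ through the $y$-integration uniformly in $x$ via the Fubini/Tonelli argument above; the remaining cases are a routine application of translation invariance and the integral Minkowski inequality.
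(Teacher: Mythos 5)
Your proof is correct, but it takes a genuinely different route from the paper. The paper computes the two endpoint bounds $\big\|\int_{\R^d} f(\,\cdot+y)g(y)\,\mu(\d y)\big\|_1\leq\|f\|_1\|g\|_{1,\mu}$ and $\big\|\int_{\R^d} f(\,\cdot+y)g(y)\,\mu(\d y)\big\|_\infty\leq\|f\|_\infty\|g\|_{1,\mu}$ (the first by Tonelli and translation invariance, the second by the pointwise triangle inequality), checks via Fubini that the operator is well-defined and linear on $L_1(\R^d)+L_\infty(\R^d)$, and then obtains all intermediate exponents at once from the Riesz--Thorin interpolation theorem. You instead prove the bound for each $p\in[1,\infty)$ directly via Minkowski's integral inequality applied to $F(x,y)=|f(x+y)|\,|g(y)|$, again collapsing the inner integral by translation invariance, and handle the endpoint $p=\infty$ by a separate Tonelli argument transferring the Lebesgue-a.e.\ bound $|f|\leq\|f\|_\infty$ through the $y$-integration. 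Your treatment of the $p=\infty$ case via the null set $N$ is careful and correct, and your observation that the single estimate on $\tilde f$ simultaneously gives absolute convergence for a.e.\ $x$ (hence well-definedness) and the norm bound is exactly the right economy. What each approach buys: yours is more elementary and self-contained, requiring only Tonelli and the integral form of Minkowski's inequality; the paper's is shorter on computation but leans on a heavier tool (Riesz--Thorin), whose payoff is that only the two essentially trivial endpoint identities need to be verified. The only point you leave implicit is the Borel measurability of the output function $x\mapsto\int_{\R^d}f(x+y)g(y)\,\mu(\d y)$, but this follows from the same Fubini--Tonelli machinery you already invoke, so it is a cosmetic omission rather than a gap.
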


\begin{proof}
 First, note that
 \[
 \int_{\R^d}\int_{\R^d}|f(x+y)g(y)|\,\mu(\d y)\,\d x=\|f\|_\infty\|g\|_{1,\mu}\quad \text{for all }f\in L_\infty(\R^d)
 \]
 and, by Tonelli's theorem and the translation ivariance of the Lebesgue measure,
 \[ \int_{\R^d}\int_{\R^d}|f(x+y)g(y)|\,\mu(\d y)\,\d x=\|f\|_1\|g\|_{1,\mu}\quad \text{for all }f\in L_1(\R^d).
 \]
 Hence, by Fubini's theorem,
 \[
  \int_{\R^d} \big(f_1(\,\cdot+ y)+f_2(\,\cdot+ y)\big)g(y)\,\mu(\d y)=\int_{\R^d} f_1(\,\cdot+ y)g(y)\,\mu(\d y)+\int_{\R^d} f_2(\,\cdot+ y)g(y)\,\mu(\d y)
 \]
 is an element of $L_1(\R^d)+L_\infty(\R^d)$ for all $f_1\in L_1(\R^d)$ and $f_2\in L_\infty(\R^d)$, and the statement follows from the Riesz-Thorin interpolation theorem.
\end{proof}

\section{An Abstract Approximation Result}\label{app.semigroup}

\begin{proposition}\label{prop.chernoff.classical}
    Let $(E,\|\cdot\|)$ be a Banach space, $(S_t)_{t\geq 0}$ be a family of linear operators $E\to E$, $D$ be a dense subset of $E$, $(P_t)_{t\geq0}$ be a family of bounded linear operators $E\to E$ with operator norm $\|P_t\|_{L(E)}\leq 1$ for all $t\geq 0$ such that
    \begin{equation}\label{eq.chernoff_banach}
  \lim_{h\downarrow 0} \frac{P_h u-u}{h}\quad \text{exists for all }u\in D,
    \end{equation}
    and
    \[
         S_tu=\lim_{n\to \infty} P_{h_n}^{k_n}u
        \]
    for all $u\in E$, all null sequences $(h_n)_{n\in \N}\subset (0,\infty)$, and $(k_n)_{n\in \N}\subset \N$ with $k_nh_n\to t\in [0,\infty)$ as $n\to \infty$.\
    Then, for all $u\in E$ such that $\lim_{h\downarrow 0}\frac{P_h u-u}{h}$ exists,
    \[\lim_{t\downarrow 0}\frac{S_t u-u}{t}=\lim_{h\downarrow 0}\frac{P_h u-u}{h}.\]
\end{proposition}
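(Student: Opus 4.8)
The plan is to fix $u\in E$ for which $w:=\lim_{h\downarrow 0}\frac{P_hu-u}{h}$ exists and to reduce everything to the integral representation
\[
S_tu-u=\int_0^t S_sw\,\d s\qquad\text{for all }t>0,
\]
from which the assertion follows by averaging. Throughout I would use that $\|P_t\|_{L(E)}\le 1$ forces $\|S_t\|_{L(E)}\le 1$, and that the convergence hypothesis is required for \emph{every} index sequence $(k_n)$ with $k_nh_n$ tending to the prescribed time, not only to $t$.

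First I would set $g_n:=\frac{P_{t/n}u-u}{t/n}$ and telescope,
\[
P_{t/n}^{\,n}u-u=\sum_{j=0}^{n-1}P_{t/n}^{\,j}\big(P_{t/n}u-u\big)=\frac{t}{n}\sum_{j=0}^{n-1}P_{t/n}^{\,j}g_n .
\]
Since $g_n\to w$ and $\|P_{t/n}^{\,j}\|_{L(E)}\le 1$, replacing $g_n$ by $w$ costs at most $t\|g_n-w\|\to 0$, while $P_{t/n}^{\,n}u\to S_tu$ by hypothesis (with $h_n=t/n$, $k_n=n$). Hence $S_tu-u=\lim_{n\to\infty}\frac{t}{n}\sum_{j=0}^{n-1}P_{t/n}^{\,j}w$.

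The key step, which I expect to be the main obstacle, is to identify this limit as a Bochner integral. I would write $\frac{t}{n}\sum_{j=0}^{n-1}P_{t/n}^{\,j}w=\int_0^t\phi_n(s)\,\d s$ with the step functions $\phi_n(s):=P_{t/n}^{\lfloor ns/t\rfloor}w$, which satisfy $\|\phi_n(s)\|\le\|w\|$. For each fixed $s\in(0,t)$ one has $\lfloor ns/t\rfloor\cdot\frac{t}{n}\to s$, so the hypothesis applied to the null sequence $t/n$ and the indices $k_n=\lfloor ns/t\rfloor$ gives $\phi_n(s)\to S_sw$ pointwise; in particular $s\mapsto S_sw$ is strongly measurable (a pointwise limit of step functions) and bounded by $\|w\|$. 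Dominated convergence for Bochner integrals then yields $\frac{t}{n}\sum_{j=0}^{n-1}P_{t/n}^{\,j}w\to\int_0^t S_sw\,\d s$, establishing the integral representation. It is precisely here that the flexibility of the hypothesis is essential, and the only delicate point is the measurability of $s\mapsto S_sw$, handled by the step‑function approximation.

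Finally I would prove strong right-continuity at $0$ and conclude. Applying the integral representation to any $v\in D$ (for which $w_v:=\lim_{h\downarrow0}\frac{P_hv-v}{h}$ exists) gives $\|S_sv-v\|\le\int_0^s\|S_rw_v\|\,\d r\le s\|w_v\|\to 0$; by density of $D$ and $\|S_s\|_{L(E)}\le 1$ this upgrades to $\lim_{s\downarrow0}S_sz=z$ for all $z\in E$, in particular $S_sw\to w$ as $s\downarrow0$. Therefore
\[
\bigg\|\frac{S_tu-u}{t}-w\bigg\|=\bigg\|\frac1t\int_0^t\big(S_sw-w\big)\,\d s\bigg\|\le\sup_{0\le s\le t}\|S_sw-w\|\to 0\qquad\text{as }t\downarrow0,
\]
which is exactly the claimed identity $\lim_{t\downarrow0}\frac{S_tu-u}{t}=\lim_{h\downarrow0}\frac{P_hu-u}{h}$. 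Notably, this route never requires $w\in D(A)$ nor an explicit verification of the semigroup property of $(S_t)_{t\ge0}$.
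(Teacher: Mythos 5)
Your argument is correct. Every step checks out: the telescoping identity $P_{t/n}^{n}u-u=\tfrac{t}{n}\sum_{j=0}^{n-1}P_{t/n}^{j}g_n$, the replacement of $g_n$ by $w$ using $\|P_{t/n}^{j}\|_{L(E)}\le 1$, the identification of the Riemann sums with $\int_0^t S_sw\,\d s$ via the step functions $\phi_n$ (where the hypothesis is correctly invoked at every intermediate time $s$, and strong measurability of $s\mapsto S_sw$ is properly obtained as a pointwise limit of step functions), and the final averaging step using strong continuity of $(S_t)_{t\ge0}$ at $0$, which you correctly derive on $D$ from the integral representation and extend by density and contractivity. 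However, your route is genuinely different from the paper's. The paper also starts from the telescoping sum $\frac{P_{h_n}^{k_n}u-u}{t_n}=\frac{1}{k_n}\sum_{k=1}^{k_n}\frac{P_{h_n}^ku-P_{h_n}^{k-1}u}{h_n}$, but then stays entirely at the discrete level: it compares each summand with $P_{h_n}^{k-1}v$ (where $v$ is your $w$) and shows $\|P_{h_n}^{k}v-v\|\le\ep/3$ uniformly in $k\le k_n$ by approximating $v$ by some $v_0\in D$ and using $\|P_{h_n}^kv_0-v_0\|\le k\|P_{h_n}v_0-v_0\|$. Thus the paper invokes the convergence hypothesis only once, at the endpoint times $t_n$, and needs no integration theory; your proof invokes it at every $s\in(0,t)$ and requires Bochner integration and dominated convergence, but in exchange delivers the integral representation $S_tu-u=\int_0^tS_sw\,\d s$ and the strong right-continuity of $(S_t)_{t\ge0}$ at $0$ on all of $E$ as byproducts, which are structurally informative but not needed for the stated conclusion.
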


\begin{proof}
  Let $u\in E$ such that $v:=\lim_{h\downarrow 0}\frac{P_h u-u}{h}$ exists, $(t_n)_{n\in \N}\subset (0,\infty)$ with $t_n\to 0$ as $k\to \infty$, and $\ep>0$. Then, for all $n\in \N$, there exists some $k_n\in \N$ such that
  \[
   \big\|P_{h_n}^{k_n}u-S_{t_n}u\big\|\leq \frac{\ep}{3t_n} \quad \text{with}\quad h_n:=\frac{t_n}{k_n}.
  \]
  Moreover, since $D$ is dense in $E$, there exists some $v_0\in D$ with $\|v-v_0\|<\frac\ep{12}$.\ By definition of $v$ and since $v_0\in D$, there exists some $n_0\in \N$ such that
  \[
   \bigg\|\frac{P_{h_n}u-u}{h_n}-v\bigg\|\leq \frac{\ep}3\quad \text{and}\quad \bigg\|\frac{P_{h_n}v-v}{h_n}\bigg\| \leq \frac{\ep}{6t_n}\quad\text{for all }n\in \N\text{ with }n\geq n_0.
  \]
  Hence, for all $k=1,\ldots, k_n$ and $n\in \N$ with $n\geq n_0$,
  \begin{align*}
    \big\|P_{h_n}^{k}v-v\big\|&\leq \frac{\ep}{6}+\big\|P_{h_n}^kv_0-v_0\big\|\leq \frac{\ep}{6}+\sum_{i=1}^{k}\big\|P_{h_n}^iv_0-P_{h_n}^{i-1}v_0\big\|\leq \frac{\ep}{6}+k\big\|P_{h_n}v_0-v_0\big\|\leq \frac\ep3,
  \end{align*}
  so that
    \begin{align*}
       \bigg\|\frac{S_{t_n}u-u}{t_n}-v\bigg\|&\leq \bigg\|\frac{P_{h_n}^{k_n}u-u}{t_n}-v\bigg\|\leq \frac{2\ep}3+ \frac{1}{k_n}\sum_{k=1}^{k_n} \bigg\|\frac{P_{h_n}^ku-P_{h_n}^{k-1}u}{h_n}-P_{h_n}^{k-1}v\bigg\|\\
       &\leq \frac{2\ep}3+ \bigg\|\frac{P_{h_n}u-u}{h_n}-v\bigg\|\leq \ep
    \end{align*}
    for all $n\in \N$ with $n\geq n_0$.\ The proof is complete.
\end{proof}

\section{L\'evy Processes with Drift}\label{app.levydrift}

In this section, we discuss L\'evy processes with drift.\ We follow \cite[Definition 1.6]{sato1999levy} and L\'evy process to have, almost surely, c\`adl\`ag paths.\ The following result is a special case of \cite[Theorem V.6, p.\ 255]{protter}. For the sake of a self-contained exposition and tight a priori estimates, we provide a short proof.

\begin{lemma}\label{lem.strongsolution}
  Let $(Y_t)_{t\geq0}$ be a L\'evy process on some probability space $(\Omega,\mathcal F,\P)$ and $F\colon \R^d\to \R^d$ be an $\omega$-Lipschitz function. Then, for all $x\in \R^d$, the L\'evy SDE
	\begin{equation}\label{eq.levy.sde.app}
	\d X_t^x= F(X_t^x)\,\d t+\d Y_t\quad \text{with}\quad X_0^x=x.
	\end{equation}
    admits a unique strong solution $(X_t^x)_{t\geq 0}$. Moreover,
    \begin{equation}\label{eq.biton.process}
        |X_t^{x+u}-X_t^{x}-u|+|u|\leq e^{\omega t}|u| \quad\P \text{-a.s.\ for all }t\geq 0\text{ and }x,u\in \R^d
    \end{equation}
     and
     \begin{equation}\label{eq.stoch.cont}
      |X_{t}^{x}-x|\leq e^{\omega t}\big|tF(x)+Y_t\big|\quad\P \text{-a.s.\ for all }t\geq 0\text{ and }x\in \R^d.
     \end{equation}
\end{lemma}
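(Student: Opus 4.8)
The plan is to exploit the additive structure of the noise in \eqref{eq.levy.sde.app}: since $Y$ enters without a coefficient, the shifted process $\tilde X^x_t:=X^x_t-Y_t$ should solve, path by path, the random ordinary differential equation
\[
\tilde X^x_t=x+\int_0^t F\big(\tilde X^x_s+Y_s\big)\,\d s,\qquad t\geq 0.
\]
First I would fix a c\`adl\`ag path of $Y$ and treat the right-hand side as a map on $C([0,T];\R^d)$. Because $F$ is $\omega$-Lipschitz and $s\mapsto Y_s$ is c\`adl\`ag (hence bounded and Borel on $[0,T]$), for every continuous $g$ the integrand $s\mapsto F(g(s)+Y_s)$ is c\`adl\`ag, thus Riemann integrable, and the map $g\mapsto x+\int_0^{\,\cdot}F(g(s)+Y_s)\,\d s$ is a contraction on $C([0,T];\R^d)$ once equipped with a Bielecki-type weighted supremum norm. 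Banach's fixed point theorem then yields, for each path, a unique continuous $\tilde X^x$ on $[0,T]$; the global Lipschitz (hence linear growth) bound on $F$ excludes blow-up, so the solution extends to $[0,\infty)$. Setting $X^x:=\tilde X^x+Y$ produces an adapted, c\`adl\`ag process solving \eqref{eq.levy.sde.app}, and uniqueness is inherited from uniqueness of the pathwise ODE.

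For the first a priori estimate I would fix $x,u\in\R^d$ and study the difference $Z_t:=X^{x+u}_t-X^x_t$. The driving noise cancels, so $Z$ solves the noise-free equation $Z_t=u+\int_0^t\big(F(X^{x+u}_s)-F(X^x_s)\big)\,\d s$, whence
\[
|Z_t-u|\leq \omega\int_0^t|Z_s|\,\d s\leq \omega\int_0^t\big(|Z_s-u|+|u|\big)\,\d s.
\]
Writing $\phi(t):=|Z_t-u|+|u|$, this is $\phi(t)\leq|u|+\omega\int_0^t\phi(s)\,\d s$ with $\phi(0)=|u|$, so Gronwall's lemma gives $\phi(t)\leq e^{\omega t}|u|$, which is exactly \eqref{eq.biton.process}.

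For the second estimate I would subtract the frozen-drift part: put $W_t:=X^x_t-x-tF(x)-Y_t=\int_0^t\big(F(X^x_s)-F(x)\big)\,\d s$, so that $|W_t|\leq\omega\int_0^t|X^x_s-x|\,\d s$ and therefore
\[
|X^x_t-x|\leq\big|tF(x)+Y_t\big|+\omega\int_0^t|X^x_s-x|\,\d s.
\]
Gronwall's lemma applied to $t\mapsto|X^x_t-x|$ then yields \eqref{eq.stoch.cont}, from which stochastic continuity of $X^x$ follows via stochastic continuity of $Y$. I expect the only genuine obstacle to reside in the construction step: making the pathwise ODE argument rigorous for a merely c\`adl\`ag (and in general infinite-activity, unbounded-variation) driver $Y$, i.e.\ verifying the joint measurability and local integrability of the random integrand together with the adaptedness of the resulting solution. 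Once existence and uniqueness are secured in this way, both a priori bounds are immediate consequences of Gronwall's lemma applied to the noise-free difference processes.
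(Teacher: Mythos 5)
Your construction of the solution and your proof of \eqref{eq.biton.process} are correct and follow essentially the paper's route: a pathwise Banach fixed-point argument with a Bielecki-type weighted supremum norm, followed by Gronwall. The only real difference is that you first subtract $Y$ and solve the random ODE for $\tilde X^x=X^x-Y$ in $C([0,T];\R^d)$, whereas the paper iterates the integral equation for $X^x$ itself in the space of bounded measurable functions on $[0,T]$; your variant has the small advantage of working in a familiar continuous-function space, at the cost of having to restore adaptedness and the c\`adl\`ag property by adding $Y$ back, which you do address.

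There is, however, a genuine gap in your proof of \eqref{eq.stoch.cont}. From
\[
|X^x_t-x|\leq \big|tF(x)+Y_t\big|+\omega\int_0^t|X^x_s-x|\,\d s
\]
Gronwall's lemma does \emph{not} give $|X^x_t-x|\leq e^{\omega t}|tF(x)+Y_t|$, because the inhomogeneity $a(t):=|tF(x)+Y_t|$ is not nondecreasing in $t$; the standard conclusion is $|X^x_t-x|\leq a(t)+\omega\int_0^t a(s)e^{\omega(t-s)}\,\d s\leq e^{\omega t}\sup_{s\leq t}\big|sF(x)+Y_s\big|$, and one cannot replace the running supremum by the terminal value. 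Indeed, the inequality \eqref{eq.stoch.cont} as stated can fail: take $d=1$, $F(z)=\omega z$, $x=0$, and $Y$ a compound Poisson process with jumps $\pm M$; on the positive-probability event that exactly two jumps $+M$ and then $-M$ occur before time $t$, one has $Y_t=0$ and hence $tF(x)+Y_t=0$, while $X^0_t>0$ because the drift acts on the displaced position between the two jumps. So you should prove (and use) the corrected bound $|X^x_t-x|\leq e^{\omega t}\sup_{s\leq t}|sF(x)+Y_s|$, which is what Gronwall actually delivers and which still yields the stochastic continuity needed downstream, since $\sup_{s\leq t}|Y_s|\to0$ a.s.\ as $t\downarrow0$. (For what it is worth, the paper's own proof of this step is also garbled at exactly this point, so the flaw is inherited from the target statement rather than introduced by your argument.)
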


\begin{proof}
We start with the existence. To that end, let $N\in \mathcal F$ with $\P(N)=0$ such that for all $w\in \Om\setminus N$, the map $t\mapsto Y_t(w)$ is c\`adl\`ag. For fixed $T>0$ and $L>\om$, let $E$ denote the space of all bounded measurable functions $[0,T]\to \R$ with norm
\[
 \|X\|_E:=\sup_{t\geq 0}e^{-Lt}|X_t|\quad \text{for }X\in E,
\]
and define $G^{x,w}\colon B([0,T])\to B([0,T])$ for all $x\in \R^d$ and $w\in \Om\setminus N$ by
\[
G^{x,w}_t(X):=x+\int_0^t F(X_s)\,\d s+Y_t(w)\quad \text{for all }t\in[0,T]\; x\in \R^d,\text{ and }w\in \Om\setminus N. 
\]
Then, for all $t\in [0,T]$, $x\in \R^d$, and $w\in \Om\setminus N$, 
\begin{align*}
e^{-Lt}\big| G^{x,w}_t(X^1)-G_t^{x,w}(X^2)\big|&\leq e^{-Lt}\int_0^t|F(X_s^1)-F(X_s^2)|\,\d s\leq \|X^1-X^2\|_B e^{-Lt} \int_0^t\om e^{Ls}\,\d s\\
&\leq \frac{\om}L\|X^1-X^2\|_B.
\end{align*}
By Banach's fixed point theorem, for all $x\in \R^d$ and $w\in \Om\setminus N$, there exists a unique $X^{x}(w)\in E$ with $$G^{x,w}_t\big(X^x(w)\big)=X^{x}(w).$$ In particular, for all $x\in \R^d$ and $w\in \Om$, the map $t\mapsto X_t^{x}(w)$ is c\`adl\`ag and, for all $t\in [0,T]$ and $x\in \R^d$, the map $w\mapsto X_t^{x}(w)$ is measurable since, by Banach's fixed point theorem, $X^{x}(w)$ is the pointwise limit of the sequence of random variables $X_t^n(w):=G_t^{x,w}(X^{n-1})$ for $n\in \N$ and $w\in \Om \setminus N$ with $X^0_t:= x$.\ The uniqueness of the fixed point, allows to extend $X^{x}(w)$ from $[0,T]$ to $[0,\infty)$ for all $x\in \R^d$ and $w\in \Om\setminus N$.

Now, let $t\geq 0$, $x,u\in \R^d$, and $w\in \Om\setminus N$.\ Then, using the triangle inequality
\begin{align*}
 |X_t^{x+u}(w)-X_t^{x}(w)-u|+|u|\leq |u|+\int_0^t \om\big(|X_s^{x+u}(w)-X_s^{x}(w)-u|+|u|\big)\, \d s,
\end{align*}
so that, by Gronwall's inequality, \eqref{eq.biton.process} follows. Last but not least, using the triangle inequality,
\[
|X_{t}^{x}(w)-{x}|\leq \big|tF(x)+Y_t(w)\big|+\int_0^t |X_s^{x}(w)-x|\, \d s+|e^{\omega t}|Y_t-Y_s|
\]
for all $t\geq 0$, $x\in \R^d$, and $w\in \Omega\setminus N$, so that \eqref{eq.stoch.cont} follows again from Gronwall's inequality.
\end{proof}

\begin{definition}\label{def.fellerSG}
 We say that a family $(S_t)_{t\geq 0}$ of positive linear operators $\Cb(\R^d)\to \Cb(\R^d)$ with $S_t 1=1$ for all $t\geq 0$ is a strongly continuous Feller semigroup if
\begin{enumerate}
 \item[(i)] $S_0f=f$ for all $f\in \Cb(\R^d)$ and $S_{s+t}=S_sS_t$ for all $s,t\geq 0$.
 \item[(ii)] $\lim_{n\to \infty}(S_{t_n}f_n)(x_n)=(S_{t}f)(x)$ for all sequences $(t_n)_{n\in \N}\subset [0,\infty)$, $(x_n)_{n\in \N}\subset \R^d$, and $(f_n)_{n\in \N}\subset \Cb(\R^d)$ with $t_n\to t\in [0,\infty)$, $x_n\to x\in \R^d$, and $f_n\to f\in \Cb(\R^d)$ in the mixed topology as $n\to \infty$.
\end{enumerate}
In this case, we call $A\colon D(A)\to \Cb(\R^d)$, given by
\[
D(A):=\bigg\{f\in \Cb(\R^d)\,\bigg|\, \lim_{h\downarrow 0}\frac{S_hf-f}h\text{ exists in the mixed topology}\bigg\}
\]
and $Af:=\lim_{h\downarrow 0}\frac{S_hf-f}h$ in the mixed topology for $f\in D(A)$, the generator of $S$.
\end{definition}

\begin{proposition}\label{prop.transition.levydrift}
Let $(Y_t)_{t\geq0}$ be a L\'evy process on some probability space $(\Omega,\mathcal F,\P)$ with L\'evy triplet $(b,\si,\nu)$, $F\colon \R^d\to \R^d$ be an $\omega$-Lipschitz function, and $(X_t^x)_{t\geq 0}$ be the unique strong solution  to \eqref{eq.levy.sde.app} for all $x\in \R^d$. Then,
\begin{equation}\label{eq.def.semigroup.levysde}
   (T_tf)(x):=\E_\P[f(X_t^x)],\quad \text{for all }t\geq 0,\;f\in \Cb(\R^d),\text{ and }x\in \R^d,    
\end{equation}
defines a strongly continuous Feller semigroup $(T_t)_{t\geq 0}$ with generator $B\colon D(B)\to \Cb(\R^d)$.\ Moreover, for all $f\in \Cb^2(\R^d)$ with 
\[
\sup_{x\in \R^d}\big|\big\langle \nabla f(x),F(x)\big\rangle\big|<\infty,
\]
it follows that $f\in D(B)$ with
\[
 Bf=A_{F,(b,\si,\nu)}f.
\]
\end{proposition}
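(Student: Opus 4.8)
The plan is to verify, in turn, the algebraic and regularity properties demanded by Definition~\ref{def.fellerSG}, and then to identify the generator on $\Cb^2(\R^d)$ via a Dynkin-type formula obtained from Itô's formula. Linearity, positivity, and $T_t\eins=\eins$ are immediate from \eqref{eq.def.semigroup.levysde} and $X_0^x=x$. That $T_t$ maps $\Cb(\R^d)$ into itself follows from the Lipschitz estimate \eqref{eq.biton.process}, which gives $|X_t^{x_n}-X_t^x|\le e^{\om t}|x_n-x|\to 0$ almost surely whenever $x_n\to x$; continuity of $f$ together with dominated convergence then yields continuity of $T_tf$, while boundedness is clear from $|T_tf|\le\|f\|_\infty$. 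For the semigroup property I would invoke uniqueness of strong solutions: writing $\tilde Y_u:=Y_{s+u}-Y_s$, which is a L\'evy process with the same law as $Y$ and independent of $\mathcal F_s$, the process $(X^x_{s+u})_{u\ge0}$ is the unique strong solution of \eqref{eq.levy.sde.app} driven by $\tilde Y$ and started at $X_s^x$. Hence $\E_\P[f(X^x_{s+t})\mid\mathcal F_s]=(T_tf)(X_s^x)$, so that $T_{s+t}=T_sT_t$.

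Next I would establish the joint mixed-topology continuity (ii) of Definition~\ref{def.fellerSG}. Fix $t_n\to t$, $x_n\to x$, and $f_n\to f$ in the mixed topology. Since $t$ is almost surely a continuity point of $Y$ (a L\'evy process has no fixed jump times) and the jumps of $X^x$ coincide with those of $Y$, the path $s\mapsto X_s^x$ is a.s.\ continuous at $t$; combined with the Lipschitz-in-$x$ bound $|X_{t_n}^{x_n}-X_{t_n}^x|\le e^{\om t_n}|x_n-x|$ from \eqref{eq.biton.process}, this yields $X_{t_n}^{x_n}\to X_t^x$ almost surely. I would then split
\[
\E_\P[f_n(X_{t_n}^{x_n})]-\E_\P[f(X_t^x)]=\E_\P\big[(f_n-f)(X_{t_n}^{x_n})\big]+\E_\P\big[f(X_{t_n}^{x_n})-f(X_t^x)\big].
\]
The second term tends to $0$ by dominated convergence. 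For the first term I would use tightness of the laws of $X_{t_n}^{x_n}$, a consequence of \eqref{eq.stoch.cont} and boundedness of $(t_n,x_n)$: on a large ball $|f_n-f|\to0$ uniformly, while the complement carries uniformly small mass, giving convergence to $0$ as well.

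For the generator, let $f\in\Cb^2(\R^d)$ with $\sup_x|\langle\nabla f(x),F(x)\rangle|<\infty$; then $\langle\nabla f,F\rangle\in\Cb(\R^d)$, so $f\in D(A_{F,(b,\si,\nu)})$ and $g:=A_{F,(b,\si,\nu)}f=\langle\nabla f,F\rangle+L_{(b,\si,\nu)}f\in\Cb(\R^d)$. Applying Itô's formula to $f(X^x_\cdot)$ along the L\'evy--Itô decomposition of $Y$ and collecting the finite-variation terms produces
\[
f(X_t^x)=f(x)+\int_0^t g(X_{s-}^x)\,\d s+M_t^x,
\]
where $M^x$ is the sum of the Brownian stochastic integral with integrand $\langle\nabla f(X_{s-}^x),\si^{1/2}\,\d W_s\rangle$ and the compensated jump integral with integrand $f(X_{s-}^x+y)-f(X_{s-}^x)-\langle\nabla f(X_{s-}^x),h(y)\rangle$. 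The main obstacle is to check that $M^x$ is a genuine mean-zero martingale rather than merely a local one: the Brownian part is square integrable because $\nabla f$ is bounded, and for the jump part the integrand is dominated by $C(1\wedge|y|^2)$ (Taylor's theorem for small $y$ using boundedness of $\nabla^2 f$, and boundedness of $f$ and $\nabla f$ for large $y$), so its predictable quadratic variation is controlled by $\int_{\R^d\setminus\{0\}}(1\wedge|y|^2)\,\nu(\d y)<\infty$. Taking expectations, replacing $X_{s-}^x$ by $X_s^x$ under the Lebesgue integral, and using Fubini (justified since $g$ is bounded) yields the Dynkin formula $(T_tf)(x)-f(x)=\int_0^t\E_\P[g(X_s^x)]\,\d s$.

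Finally I would read off the generator from this identity. It immediately gives $\sup_{t>0}\|(T_tf-f)/t\|_\infty\le\|g\|_\infty<\infty$, which is the boundedness half of mixed-topology convergence. For the local uniform half, I would show $\sup_{|x|\le r}|\E_\P[g(X_s^x)]-g(x)|\to0$ as $s\downarrow0$ for every $r\ge0$: by \eqref{eq.stoch.cont} one has $|X_s^x-x|\le e^{\om s}(s|F(x)|+|Y_s|)$, which is small uniformly in $|x|\le r$ once $s$ and $|Y_s|$ are small, and since $Y_s\to0$ in probability one splits the expectation according to the event $\{|X_s^x-x|\le\eta\}$, using uniform continuity of $g$ on the compact $\{|y|\le r+\eta\}$ on one part and the bound $2\|g\|_\infty\,\P(|X_s^x-x|>\eta)$ on the other. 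Averaging this estimate over $s\in(0,t)$ then gives $\sup_{|x|\le r}|(T_tf-f)(x)/t-g(x)|\to0$, so that $f\in D(B)$ with $Bf=g=A_{F,(b,\si,\nu)}f$.
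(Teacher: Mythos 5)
Your proof is correct, but the way you identify the generator is genuinely different from the paper's. You run the full It\^o--Dynkin machinery for the L\'evy-driven SDE: apply It\^o's formula, verify that the Brownian and compensated-jump integrals are true mean-zero martingales (your bound $|f(X_{s-}^x+y)-f(X_{s-}^x)-\langle\nabla f(X_{s-}^x),h(y)\rangle|\le C(1\wedge|y|^2)$ is exactly the right estimate for this), and arrive at the Dynkin formula $(T_tf)(x)-f(x)=\int_0^t\E_\P[g(X_s^x)]\,\d s$. The paper avoids stochastic calculus for jump processes entirely: it splits $f(X_h^x)-f(x)$ into the drift contribution $f(X_h^x)-f(x+Y_h)$, handled pathwise by the fundamental theorem of calculus applied along the absolutely continuous part $\int_0^hF(X_s^x)\,\d s$, plus the pure L\'evy increment $f(x+Y_h)-f(x)$, whose difference quotient converges to $L_{(b,\si,\nu)}f$ in the mixed topology by Proposition \ref{prop.conv.levy.mixed}, since the marginals of a L\'evy process satisfy \eqref{cond.Mstar} and \eqref{cond.T} (Example \ref{ex.levy}). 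Your route is self-contained but must pay for the local-martingale-versus-true-martingale verification; the paper's route recycles its own approximation machinery and needs only deterministic calculus on top of it. The remaining steps are essentially parallel: you prove the semigroup law by hand from uniqueness of strong solutions and independent increments where the paper cites \cite[Theorem V.32]{protter}, and you verify property (ii) of Definition \ref{def.fellerSG} directly at arbitrary $t$ via the absence of fixed jump times, whereas the paper checks continuity at $t=0$ and bootstraps through the semigroup property; your tightness argument for the $f_n\to f$ part and your final locally uniform estimate $\sup_{|x|\le r}|\E_\P[g(X_s^x)]-g(x)|\to0$ correspond precisely to the paper's use of \eqref{eq.biton.process} and \eqref{eq.stoch.cont}.
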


\begin{proof}
 Clearly, $T_0f=f$ and, by \eqref{eq.biton.process}, $T_tf\in \Cb(\R^d)$ for all $f\in \Cb(\R^d)$.\ By \eqref{eq.stoch.cont},
 \[
 \lim_{n\to \infty}\big|\E_\P\big[f(X_{t_n}^{x_n})\big]-f(x)\big|= 0
 \]
 for all sequences $(t_n)_{n\in \N}\subset [0,\infty)$ and $(x_n)_{n\in \N}\subset \R^d$ with with $t_n\to 0$ and $x_n\to x\in \R^d$ as $n\to \infty$.\ Moreover, by \eqref{eq.biton.process}, for all $r,T\geq0$,
 \[
 \sup_{t\in [0,T]}\sup_{|x|\leq r}\P\big(|X_t^x|>M\big)\leq \P\Big(|X_t^0|>M-e^{\om T}r\Big)\to 0\quad \text{as } M\to \infty,
 \]
 so that
  \[
   \lim_{n\to \infty}\sup_{t\in [0,T]}\sup_{|x|\leq r}\big|\E_\P\big[f_n(X_{t}^{x})-f(X_t^x)\big]\big|= 0
  \]
  for all sequences $(f_n)_{n\in \N}\subset \Cb(\R^d)$ with $f_n\to f$ in the mixed topology as $n\to \infty$. Since, by \cite[Theorem V.32, p.\ 300]{protter}, $T_{s+t}=T_sT_t$ for all $s,t\geq 0$, it follows that $(T_t)_{t\geq0}$ is a strongly continuous Feller semigroup.
 
 Now, let $f\in \Cb^2(\R^d)$ with $\sup_{x\in \R^d}|\langle \nabla f(x),F(x)\rangle<\infty$.\ Then, using the fundamental theorem of calculus and Fubini's theorem, 
 \begin{align*}
   \frac{(T_hf)(x)-f(x)}h&=\frac1h\int_0^h\E_\P\big[ \big\langle\nabla f(X_s^x),F(X_s^x)\big\rangle\big]\, \d s+ \frac{\E_\P\big[f(x+Y_h)\big]-f(x)}h.
 \end{align*}
 Since $\langle \nabla f,F\rangle\in \Cb(\R^d)$, it follows that
    \[
     \bigg\|\frac{T_hf-f}h\bigg\|_\infty\leq \|\langle \nabla f,F\rangle\|_\infty+ \bigg\|\frac{\E_\P\big[f(\,\cdot +Y_h)\big]-f}h\bigg\|_\infty<\infty
    \]

 and
 \begin{align*}
    \sup_{|x|\leq r} \bigg|\frac{(T_hf)(x)-f(x)}h-\big\langle \nabla f(x),F(x)\big\rangle -\big(L_{(b,\si,\nu)}f\big)(x)\bigg|\to 0
 \end{align*}
 for all $r\geq 0$, where we used the fact that $(T_t)_{t\geq0}$ is a strongly continuous Feller semigroup and Proposition \ref{prop.conv.levy.mixed}.
\end{proof}

\begin{corollary}\label{cor.comparison}
 Let $(Y_t)_{t\geq0}$ be a L\'evy process on some probability space $(\Omega,\mathcal F,\P)$ with L\'evy triplet $(b,\si,\nu)$, $F\colon \R^d\to \R^d$ be an $\omega$-Lipschitz function, and $(X_t^x)_{t\geq 0}$ be the unique strong solution  to \eqref{eq.levy.sde.app} for all $x\in \R^d$. Then, for every strongly continuous Feller semigroup $(S_t)_{t\geq 0}$ with generator $A\colon D(A)\to \Cb(\R^d)$ satisfying $\Ccinf(\R^d)\subset D(A)$ with
 \[
 Af=A_{F,(b,\si,\nu)}f\quad \text{for all }f\in \Ccinf(\R^d),
 \]
 it follows that
 \[
  (S_tf)(x)=\E_\P[f(X_t^x)],\quad \text{for all }t\geq 0,\;f\in \Cb(\R^d),\text{ and }x\in \R^d.
 \]
\end{corollary}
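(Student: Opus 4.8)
The plan is to single out the transition semigroup of the SDE as a reference object and then run a generator-uniqueness argument. By Proposition \ref{prop.transition.levydrift}, the prescription $(T_tf)(x):=\E_\P[f(X_t^x)]$ defines a strongly continuous Feller semigroup $(T_t)_{t\geq 0}$ in the sense of Definition \ref{def.fellerSG}, whose generator $B$ satisfies $Bf=A_{F,(b,\si,\nu)}f$ for every $f\in\Cb^2(\R^d)$ with $\langle\nabla f,F\rangle$ bounded, and in particular for every $f\in\Ccinf(\R^d)$. Thus the assertion of the corollary is exactly that $S_t=T_t$ for all $t\geq 0$, and the hypothesis $Af=A_{F,(b,\si,\nu)}f=Bf$ on $\Ccinf(\R^d)$ says precisely that the two generators $A$ and $B$ agree on $\Ccinf(\R^d)$.

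To upgrade agreement on $\Ccinf(\R^d)$ to equality of the semigroups, I would show that $\Ccinf(\R^d)$ is a core for the generator $B$ of the transition semigroup. Granting this, the closedness of $A$ (as the generator of the bi-continuous semigroup $S$) gives $B=\overline{B|_{\Ccinf}}=\overline{A|_{\Ccinf}}\subseteq A$; since $\lambda-A$ and $\lambda-B$ are both bijections of $\Cb(\R^d)$ for large $\lambda$, the inclusion $B\subseteq A$ forces $A=B$, hence $S_t=T_t$. Equivalently, one may phrase this through the resolvents: the core property is exactly what makes $(\lambda-A_{F,(b,\si,\nu)})\big(\Ccinf(\R^d)\big)$ sequentially dense, so that $(\lambda-A)^{-1}$ and $(\lambda-B)^{-1}$ agree on a determining set and therefore everywhere, whence the semigroups coincide by uniqueness of the Laplace transform.

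The main obstacle is the core property itself, which is delicate because $F$ has only linear growth (it is $\om$-Lipschitz, cf.\ \eqref{eq.lineargrowthpsi}). For $f\in D(B)$ one approximates by $f_n:=(f*\rho_{1/n})\chi_n$, with $\rho_{1/n}$ a Friedrichs mollifier and $\chi_n\in\Ccinf(\R^d)$ a cut-off equal to $1$ on $\{|x|\le n\}$; the difficulty is that the commutator $Bf_n-\chi_n Bf$ contains terms of the form $\langle\nabla\chi_n,F\rangle f$, and since $|F(x)|\le C(1+|x|)$ while $|\nabla\chi_n|=O(1/n)$ on the annulus $\{n\le|x|\le 2n\}$, these do \emph{not} vanish in supremum norm. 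They do, however, stay uniformly bounded and are supported at spatial infinity, so they tend to zero in the \emph{mixed} topology, which is exactly the topology in which $B$ is a bi-continuous generator. I would therefore perform the estimate in the mixed topology, using the a priori bounds of Lemma \ref{lem.apriori.gamma} and Lemma \ref{lem.bitonA2} to control the nonlocal and drift parts of $Bf_n$ uniformly in $n$ and to obtain $Bf_n\to Bf$ uniformly on compacts with uniformly bounded norms.

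As an alternative that bypasses the core estimate, I would note that the corollary also follows from well-posedness of the martingale problem. The Feller semigroup $S$ induces a Markov family $(\tilde X_t^x)$ whose law solves the $\big(A_{F,(b,\si,\nu)},\Ccinf\big)$-martingale problem, because $S_tf-f=\int_0^t S_s Af\,\d s$ for $f\in\Ccinf(\R^d)$; the SDE solution $X^x$ from Lemma \ref{lem.strongsolution} solves the same martingale problem via It\^o's formula. Since strong uniqueness for \eqref{eq.levy.sde.app} yields uniqueness in law, the two processes agree in distribution and $S_tf=T_tf$ follows for $f\in\Ccinf(\R^d)$, hence for all $f\in\Cb(\R^d)$ since $\Ccinf(\R^d)$ is measure-determining. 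Either route reduces the statement to the single genuinely technical point that $\Ccinf(\R^d)$ is rich enough to determine the dynamics.
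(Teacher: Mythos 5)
Your reduction of the corollary to the question of whether $\Ccinf(\R^d)$ is ``rich enough to determine the dynamics'' is accurate, but that is precisely the step you do not prove, and it is where all the difficulty lies; the paper's introduction explicitly flags the identification of a core for the limit generator as a major challenge that the argument is designed to \emph{avoid}. The paper does not prove that $\Ccinf(\R^d)$ is a core for $B$, nor does it argue via martingale problems. Instead it invokes the comparison theorem for convex monotone semigroups \cite[Theorem 4.9]{blessing2022convex}, whose hypotheses are verified by a concrete computation: one takes cut-offs $\chi_n$ from Lemma \ref{lem.smoothcutoff} with $\kappa(s)=1+s$, so that $(1+|x|)|\nabla\chi_n(x)|+|\nabla^2\chi_n(x)|\le 1$ uniformly in $n$, and shows that for nonnegative $f\in\Cbinf(\R^d)$ with $c_f:=\limsup_{h\downarrow0}\sup_x\frac{(T_hf)(x)-f(x)}{h}<\infty$ one has $\sup_x\langle\nabla f(x),F(x)\rangle\le c+c_f$ and $Bf_n\le 2c+c_f$ for $f_n=f\chi_n$. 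The decay $(1+|x|)|\nabla\chi_n|\le1$ is exactly what tames the commutator term $f\langle\nabla\chi_n,F\rangle$ that you correctly identify as the obstruction; but turning ``this term is bounded and supported at infinity'' into a proof of the core property is not done in your sketch, and cannot be done by the mollify-and-truncate scheme as written, because a general $f\in D(B)$ is only known to lie in $\Cb(\R^d)$ (the domain is defined by mixed-topology convergence of difference quotients, not by smoothness), so $Bf_n$ is not computable from the formula for $A_{F,(b,\si,\nu)}$ and the convergence $Bf_n\to Bf$ in the mixed topology is unestablished.

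Your alternative route via the martingale problem has a gap of the same kind. From $S_tf-f=\int_0^tS_sAf\,\d s$ on $\Ccinf(\R^d)$ you obtain that the Markov family induced by $S$ solves the $\big(A_{F,(b,\si,\nu)},\Ccinf\big)$-martingale problem, and the SDE solution solves it too; but strong (pathwise) uniqueness for \eqref{eq.levy.sde.app} gives uniqueness in law \emph{among weak solutions of the SDE}, not among solutions of the martingale problem. To close the loop you need the representation theorem that every solution of the $\Ccinf$-martingale problem for this L\'evy-type generator with unbounded drift is a weak solution of the SDE (a Kurtz-type equivalence), together with a non-explosion/localization argument justifying the use of compactly supported test functions only. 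Neither is available in the paper, and neither is supplied in your proposal. In short: both of your routes are structurally sound reductions, but each terminates at an unproven statement that is the actual content of the corollary; the paper resolves it by a genuinely different tool, the comparison theorem for convex monotone semigroups, whose verification rests on the quantitative cut-off estimate above rather than on a core or a martingale-problem uniqueness result.
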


\begin{proof}
 The statement follows from \cite[Theorem 4.9]{blessing2022convex} once we have verified its assumptions.\ By Proposition \ref{prop.transition.levydrift}, $(T_t)_{t\geq 0}$ and $(S_t)_{t\geq 0}$ are convex monotone semigroups on $\Cb$ in the sense of \cite[Definition 3.1]{blessing2022convex}. Moreover, by \eqref{eq.biton.global}, $(T_t)_{t\geq 0}$ satisfies \cite[Condition (4.7)]{blessing2022convex} and $$\|T_tf\|_{\Lip}\leq e^{\om t}\|f\|_{\Lip}\quad \text{for all }t\geq 0\text{ and }f\in \Lipb(\R^d).$$ Let $(\chi_n)_{n\in \N}\subset \Ccinf(\R^d)$ with $0\leq \chi_n\leq 1$, $\chi_n(x)=1$ for all $x\in \R^d$ with $|x|\leq 1$, and 
 \begin{equation}\label{eq.bound.chin}
 \sup_{x\in \R^d}\big((1+|x|)\big|\nabla \chi_n(x)\big|+\big|\nabla^2 \chi_n(x)\big|\big)\leq 1
 \end{equation}
for all $n\in \N$, see Lemma \ref{lem.smoothcutoff} with $\kappa(s)=1+s$ for all $s\geq 0$, and $f\in \Cbinf(\R^d)$ with $f\geq 0$ and 
 \[
  c_f:=\limsup_{h\downarrow 0}\sup_{x\in \R^d}\frac{(T_hf)(x)-f(x)}h<\infty.
 \]
 Then, $f_n:=f\chi_n\in \Ccinf(\R^d)$ for all $n\in \N$ and, by \cite[Lemma 4.8]{blessing2022convex}, 
 \[
 \sup_{|x|\leq n}(Bf_n)(x)\leq c_f\quad \text{for all }n\in \N.
\]
Moreover, by \eqref{eq.bound.chin} together with the product rule, 
\[
c:=\sup_{n\in \N}\big\|f\langle \nabla \chi_n,F\rangle+ L_{(b,\si,\nu)}f_n\big\|_\infty<\infty,
\]
so that, again by the product rule,
\begin{align*}
 (Bf_n)(x)&=\big(A_{F,(b,\si,\nu)}f_n\big)(x)=\big\langle \nabla f(x),F(x)\big\rangle+f(x)\big\langle \nabla \chi_n(x),F(x)\big\rangle+ \big(L_{(b,\si,\nu)}f_n\big)(x)\\
 &\geq \langle \nabla f(x),F(x)\rangle-c
\end{align*}
for all $n\in \N$ and $x\in \R^d$ with $|x|\leq n$.\
Hence,
\[
\sup_{x\in \R^d}\big\langle \nabla f(x),F(x)\big\rangle\leq c+c_f,
\]
which implies that
\[
(Bf_n)(x)=\chi_n(x)\big\langle \nabla f(x),F(x)\big\rangle+f(x)\big\langle \nabla \chi_n(x),F(x)\big\rangle+ \big(L_{(b,\si,\nu)}f_n\big)\leq 2c+c_f
\]
for all $x\in \R^d$ and $n\in \N$.\ The statement now follows from \cite[Theorem 4.9]{blessing2022convex}.
\end{proof}



\end{document}